\documentclass[12pt]{article}
\usepackage{amsfonts, amsmath, amssymb, latexsym, eucal, amscd} 

\usepackage{amsthm}
\usepackage{amscd}

\usepackage{cite}

\newtheorem{definition}{Definition}[section]

\newtheorem{lemma}[definition]{Lemma}

\newtheorem{remark}[definition]{Remark}
\newtheorem{example}[definition]{Example}
\newtheorem{conjecture}[definition]{Conjecture}

\newtheorem{proposition}[definition]{Proposition}

\begin{document} 

\title{\bf Circular Hessenberg pairs and \\ the
tridiagonal relations
}
\author{
Kazumasa Nomura and Paul Terwilliger}
\date{}

\maketitle
\begin{abstract}  A square matrix is said to be Hessenberg whenever each entry below the subdiagonal is zero, and each entry on the subdiagonal is nonzero.
A Hessenberg matrix is called circular whenever the top-right corner entry is nonzero, and every other entry above the superdiagonal is zero.
A circular Hessenberg pair consists of two diagonalizable linear maps on a nonzero finite-dimensional vector space, that each act on an eigenbasis of the other one
in a circular Hessenberg fashion. In 2022, Jae-ho Lee conjectured that a circular Hessenberg pair satisfies two relations called the tridiagonal relations.
In the present paper, we prove Lee's conjecture. Our proof is not elementary.
\medskip

\noindent
{\bf Keywords}. Leonard pair, Hessenberg matrix, Hessenberg pair, tridiagonal relations.
\hfil\break
\noindent {\bf 2020 Mathematics Subject Classification}.
Primary: 05E30. Secondary: 15A04, 15A21.
 \end{abstract}
 
 \section{Introduction}
 \noindent There is a mathematical object called a Leonard pair \cite{LS99}, that appears in algebraic combinatorics \cite{bbit,int},  the theory of orthogonal polynomials \cite{qrac},
  and representation theory \cite{LSintro}.
 A Leonard pair consists of two diagonalizable linear maps on
 a nonzero finite-dimensional vector space, that each act on an eigenbasis of the other one in an irreducible tridiagonal fashion  \cite[Definition~1.1]{LS99}. The original application
 of the Leonard pair concept was twofold: (i)  to illuminate a theorem of Leonard \cite[p.~260]{bannai}, \cite{leonard} concerning the orthogonal polynomials that make up the terminating branch of the Askey scheme;
 (ii) to describe the irreducible modules for the subconstituent algebra of a $Q$-polynomial distance-regular graph \cite{tSub1, tSub2, tSub3}.
In the study of a Leonard pair, it is natural to consider a related object called a Leonard system \cite[Definition~1.4]{LS99}. In \cite[Theorem~1.9]{LS99} the Leonard systems are classified up to isomorphism.
A key step in the classification is the following fact. 
By \cite[Theorem~1.12]{LS99}, a Leonard pair satisfies two relations called the tridiagonal relations; these are shown in \eqref{eq:CTD1}, \eqref{eq:CTD2} below.
More information about Leonard pairs and systems can be found in \cite{nomKraw, nomSpinModel, LS24,aa, LSnotes, vidunas}.
 \medskip

\noindent  In \cite{godjali}, Ali Godjali introduced a generalization of a Leonard pair called a Hessenberg pair. In \cite[Definition~2.2]{godjali} Godjali defined the concept of a Hessenberg system,
and in \cite[Theorem~6.3]{godjali} he classfied up to isomorphism the Hessenberg systems. In \cite{godjali2}, Godjali used Hessenberg pairs and systems to investigate double Vandermonde matrices.
\medskip

\noindent In \cite{JHL}, Jae-ho Lee introduced a family of Hessenberg pairs and Hessenberg systems, said to be circular. In \cite[Conjecture~2.13]{JHL}, Lee conjectured that
every circular Hessenberg pair satisfies the tridiagonal relations. In \cite[Theorem~5.6]{JHL}, Lee classified up to isomorphism the circular Hessenberg systems
for which the corresponding circular Hessenberg pair satisfies  the tridiagonal relations. 
\medskip

\noindent  In this paper, we prove Lee's conjecture. Our proof is not elementary.
\medskip

\noindent The paper is organized as follows. Section 2 contains some preliminaries.
In Section 3, we recall the concepts of a Hessenberg pair and  system.
In Section 4, we recall the concepts of a circular Hessenberg pair and system.
In Section 5, we discuss the tridiagonal relations and Lee's conjecture. We also prove Lee's conjecture for $d=2$, where $d+1$ is the dimension of the  vector space mentioned in the conjecture.
In Section 6, we prove Lee's conjecture for $d=3$.
In Section 7, we prove Lee's conjecture for $d\geq 4$.

 \section{Preliminaries}  In this section, we review some concepts and notation that will be used throughout the paper.
 Recall the natural numbers $\mathbb N = \lbrace 0,1,2,\ldots\rbrace$ and integers $\mathbb Z = \lbrace 0,\pm1, \pm2, \ldots\rbrace$.
 For $n \in \mathbb N$ a sequence $x_0, x_1,\ldots, x_n$ is often denoted by $\lbrace x_i \rbrace_{i=0}^n$.
  Let $\mathbb F$ denote a field.  An element of $\mathbb F$ is called a {\it scalar}.
  Every vector space and algebra mentioned in this paper, is understood to be over $\mathbb F$.
  Fix an integer $d\geq 2$, and let $V$ denote a vector space 
with dimension $d+1$. Let ${\rm End}(V)$ denote the algebra consisting of the $\mathbb F$-linear maps from $V$ to $V$.
Let ${\rm Mat}_{d+1}(\mathbb F)$ denote the algebra of $(d+1) \times (d+1)$ matrices that have all entries in $\mathbb F$.
We index the rows and columns by $0,1,\ldots, d$.
We recall how each basis for $V$ gives an algebra isomorphism 
${\rm End}(V) \to {\rm Mat}_{d+1}(\mathbb F)$. Let $\lbrace v_i \rbrace_{i=0}^d$ denote a basis for $V$, and let $A \in {\rm End}(V)$.
By the {\it matrix that represents $A$ with respect to $\lbrace v_i \rbrace_{i=0}^d$}, we mean the matrix $B \in  {\rm Mat}_{d+1}(\mathbb F)$
such that $A v_j = \sum_{i=0}^d B_{i,j} v_i$ for $0 \leq j \leq d$. The isomorphism sends $A$ to the matrix in ${\rm Mat}_{d+1}(\mathbb F)$ that represents $A$ with
respect to  $\lbrace v_i \rbrace_{i=0}^d$. 
Let $A \in {\rm End}(V)$. We say that $A$ is {\it diagonalizable} whenever $V$ is spanned by the eigenspaces of $A$.
We say that $A$ is {\it multiplicity-free} whenever $A$ is diagonalizable and each eigenspace of $A$ has dimension one. Assume that $A$ is multiplicity-free,
and let $\lbrace V_i \rbrace_{i=0}^d$ denote an ordering of the eigenspaces of $A$. By linear algebra, the sum $V=\sum_{i=0}^d V_i$ is direct.
For $0 \leq i \leq d$ let $\theta_i$ denote the eigenvalue
of $A$ for $V_i$. The scalars $\lbrace \theta_i \rbrace_{i=0}^d$ are mutually distinct.
For $0 \leq i \leq d$ define $E_i \in {\rm End}(V)$ such that $(E_i-I)V_i=0$ and $E_i V_j =0$ if $j \not=i$ $(0 \leq j \leq d)$. We call $E_i$
the {\it primitive idempotent} of $A$ associated with $V_i$ (or $\theta_i$). We have
(i) $E_i E_j = \delta_{i,j} E_i$ $(0 \leq i,j\leq d)$; 
(ii) $I = \sum_{i=0}^d E_i$;
(iii) $A = \sum_{i=0}^d \theta_i E_i$;
(iv) $A E_i = \theta_i E_i = E_i A$ $(0 \leq i \leq d)$;
(v) $V_i = E_i V$ $(0 \leq i \leq d)$. 
Moreover
\begin{align} \label{eq;Eformula}
E_i = \prod_{\stackrel{0 \leq j \leq d}{j \neq i}}
       \frac{A-\theta_jI}{\theta_i-\theta_j}
\qquad \qquad (0 \leq i \leq d).
\end{align}
By linear algebra,  both $\lbrace E_i \rbrace_{i=0}^d$ and $\lbrace A^i \rbrace_{i=0}^d$ are bases for the subalgebra of ${\rm End}(V)$ generated by $A$.
 \medskip

 \noindent For $R, S \in {\rm End}(V)$ their commutator is $\lbrack R, S\rbrack=RS-SR$.
 \medskip
 
 \noindent For an indeterminate $\lambda$, let  $\mathbb F \lbrack \lambda \rbrack$ denote the algebra consisting of the polynomials in $\lambda$
 that have all coefficients in $\mathbb F$.
For commuting indeterminates 
  $\lambda, \mu $ let $\mathbb F \lbrack \lambda, \mu\rbrack$ denote the algebra consisting of the polynomials in $\lambda, \mu$
 that have all coefficients in $\mathbb F$. 
 \medskip
 
 \noindent For $B \in {\rm Mat}_{d+1}(\mathbb F)$ let ${\rm det}(B)$ and ${\rm tr}(B)$ denote the determinant and trace of $B$, respectively.
 
 \section{Hessenberg pairs and systems}
 
 \noindent In this section, we recall the concept of a Hessenberg pair and a Hessenberg system.
 \medskip
 
 \begin{definition} \label{def:HM} \rm
 A matrix $M \in {\rm Mat}_{d+1} (\mathbb F)$ is called {\it Hessenberg} whenever 
 \begin{align*}
 M_{i,j}=  \begin{cases} 0 &\hbox{ \rm if  $i-j>1$},\\
                          \not=0 &\hbox{ \rm if  $i-j=1$}
                      \end{cases} \qquad \qquad (0 \leq i,j\leq d).
 \end{align*}
 \end{definition}
 
 \begin{example} \rm A Hessenberg matrix with $d=5$ looks as follows:
 \begin{align*}
 \text{Hessenberg:} \qquad
\begin{pmatrix}
* & *  & * & * & * & *  \\
\bullet & * & * & * & * & *  \\
 0 & \bullet &  * & * & * & *  \\
0 &  0 & \bullet &  * & * & * \\
0 & 0 &  0 & \bullet &  * & * \\
0 & 0 & 0 &  0 & \bullet &  *  \\
\end{pmatrix}.
\end{align*}
Here
 $\bullet$ denotes an entry that must be nonzero,
and $*$ denotes an entry that might be nonzero.
 \end{example}
 
 \begin{definition} \label{def:HP} \rm (See \cite[Definition~1.1]{godjali}.)
 A {\it Hessenberg pair on $V$} is an ordered pair $A, A^*$ of elements in ${\rm End}(V)$ such that:
 \begin{enumerate}
 \item[\rm (i)]  there exists a basis for $V$ with respect to which the matrix representing $A$ is diagonal and the matrix representing $A^*$ is Hessenberg;
 \item[\rm (ii)] there exists a basis for $V$ with respect to which the matrix representing $A^*$ is diagonal and the matrix representing $A$ is Hessenberg.
 \end{enumerate}
 \end{definition}
 
 \begin{remark} \rm Our definition of a Hessenberg pair is slightly different from the one in \cite{godjali}. What we call a Hessenberg pair is called
 a thin Hessenberg pair in \cite{godjali}.
 \end{remark}
 
  \noindent We mention one feature of a Hessenberg pair.
 \begin{lemma} \label{lem:MF} {\rm (See \cite[Lemma~2.1]{godjali}.)}
 Let $A, A^*$ denote a Hessenberg pair on $V$. Then each of $A, A^*$ is multiplicity-free.
 \end{lemma}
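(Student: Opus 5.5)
By symmetry between conditions (i) and (ii) of Definition \ref{def:HP}, it suffices to show that $A$ is multiplicity-free. Diagonalizability is immediate from condition (i): there is a basis $\lbrace v_i\rbrace_{i=0}^d$ for $V$ with respect to which the matrix representing $A$ is diagonal. So the plan is to show that the eigenvalues of $A$ are mutually distinct. Equivalently, each eigenspace of $A$ has dimension one.

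For this we use the basis from condition (ii). Let $\lbrace u_i\rbrace_{i=0}^d$ be a basis for $V$ with respect to which $A^*$ is diagonal and $A$ is represented by a Hessenberg matrix $M$. The key observation is that a Hessenberg matrix is nonderogatory. Because every subdiagonal entry $M_{i,i-1}$ is nonzero, we get
\begin{align*}
A u_{i-1} \in M_{i,i-1} u_i + {\rm span}\lbrace u_0,\ldots,u_{i-1}\rbrace \qquad (1 \leq i \leq d).
\end{align*}
By induction on $i$, the vector $A^i u_0$ lies in $M_{i,i-1}\cdots M_{1,0}\, u_i + {\rm span}\lbrace u_0,\ldots,u_{i-1}\rbrace$. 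Hence $u_0, Au_0, \ldots, A^d u_0$ are linearly independent, and $u_0$ is a cyclic vector for $A$. It follows that the minimal polynomial of $A$ has degree $d+1$, since no nonzero polynomial of degree at most $d$ annihilates $u_0$.

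Now combine the two facts. Since $A$ is diagonalizable, its minimal polynomial is $\prod (\lambda-\theta)$, taken over the distinct eigenvalues $\theta$ of $A$. This product has degree $d+1$, so $A$ has $d+1$ distinct eigenvalues on the $(d+1)$-dimensional space $V$. Therefore each eigenspace has dimension one, and $A$ is multiplicity-free. Interchanging the roles of $A$ and $A^*$, using condition (i) for the Hessenberg form and condition (ii) for diagonalizability, gives the same conclusion for $A^*$.

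The only step with any real content is the cyclic-vector argument. There, the care needed is in the indexing convention: $B_{i,j}$ is the coefficient of $v_i$ in $Av_j$. So the nonzero subdiagonal entries are exactly what make $A$ push each $u_{i-1}$ to a new basis vector $u_i$. Everything else is standard linear algebra.
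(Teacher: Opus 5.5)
Your proof is correct and complete. The key facts are that the coefficient of $u_i$ in $A^i u_0$ is $M_{1,0}M_{2,1}\cdots M_{i,i-1}\neq 0$, which makes $u_0, Au_0, \ldots, A^d u_0$ a basis, and that for a diagonalizable map the product of $\lambda-\theta$ over the distinct eigenvalues annihilates it. Together these force $d+1$ distinct eigenvalues.

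The paper gives no argument of its own for this lemma; it only cites Godjali, so there is no in-text proof to compare with.

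For comparison, an equally short route is a rank count. For any scalar $\theta$, take the submatrix of $M-\theta I$ on rows $1,\ldots,d$ and columns $0,\ldots,d-1$. It is upper triangular with diagonal entries $M_{1,0},\ldots,M_{d,d-1}$, all nonzero. Hence ${\rm rank}(A-\theta I)\geq d$, and every eigenspace of $A$ has dimension at most one.

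That version bounds eigenspace dimensions directly and needs no minimal-polynomial facts. Yours yields the extra information that $A$ is nonderogatory with cyclic vector $u_0$. Either one, combined with diagonalizability from the other basis, gives multiplicity-freeness.
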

 
 \noindent  Next, we characterize the Hessenberg pairs on $V$. We will use the following  concepts.
 Let  $M \in {\rm Mat}_{d+1}(\mathbb F)$. Then $M$ is called {\it lower bidiagonal} whenever $M$ is lower triangular and Hessenberg.
 The matrix $M$ is called {\it upper bidiagonal} whenever the transpose $M^{\rm t}$ is lower bidiagonal.

 \begin{lemma} \label{ex:HP} {\rm (See \cite[Proposition~7.6 and Lemma~7.7]{LSnotes}.)} Elements $A, A^*$ in ${\rm End}(V)$ form a Hessenberg pair on $V$ if and only
 if there exists a basis for $V$ with respect to which the matrices representing $A$ and $A^*$ have the following forms:
 \begin{enumerate}
 \item[\rm (i)] the matrix representing $A$ is lower bidiagonal with  diagonal entries mutually distinct;
  \item[\rm (ii)] the matrix representing $A^*$ is upper bidiagonal with diagonal entries mutually distinct.
 \end{enumerate}
 \end{lemma}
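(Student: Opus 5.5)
We prove both directions by passing between eigenbases of $A$ and $A^*$ and a "split" basis adapted to both flags. The whole argument is linear algebra, using only Lemma~\ref{lem:MF} and the definitions.

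For the forward direction, assume $A,A^*$ is a Hessenberg pair. By Lemma~\ref{lem:MF} both maps are multiplicity-free. Take a basis $\lbrace u_i\rbrace_{i=0}^d$ of eigenvectors of $A$, with eigenvalues $\theta_i$, for which $A^*$ is Hessenberg. Also take a basis $\lbrace w_i\rbrace_{i=0}^d$ of eigenvectors of $A^*$, with eigenvalues $\theta^*_i$, for which $A$ is Hessenberg. Write $E_i$ and $E^*_i$ for the corresponding primitive idempotents.

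The Hessenberg condition for $A^*$ says that $A^*(E_0V+\cdots+E_iV)\subseteq E_0V+\cdots+E_{i+1}V$, and that the component in $E_{i+1}V$ is nonzero. Hence the flag $U_i=E_0V+\cdots+E_iV$ is generated by $u_0$ under $A^*$. Likewise, the flag $U^*_i=E^*_0V+\cdots+E^*_iV$ is generated by $w_0$ under $A$.

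We then define the split decomposition
\[
V_i=(E_0V+\cdots+E_iV)\cap(E^*_iV+\cdots+E^*_dV), \qquad 0\le i\le d.
\]
We aim to show that $V=\sum_{i=0}^d V_i$ is direct with each $V_i$ of dimension one, and that $(A-\theta_iI)V_i\subseteq V_{i+1}$ and $(A^*-\theta^*_iI)V_i\subseteq V_{i-1}$. In a basis $v_i\in V_i$, $A$ is then lower bidiagonal with diagonal $\theta_i$, and $A^*$ is upper bidiagonal with diagonal $\theta^*_i$; both diagonals are mutually distinct.

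The key point, and the main obstacle, is the dimension count. We need $E_0V+\cdots+E_iV$ to meet $E^*_{i+1}V+\cdots+E^*_dV$ trivially. We need this transversality only for the single consecutive pair of flags arising from the two Hessenberg conditions. I would try the following. Suppose $0\ne x\in U_i\cap(E^*_{i+1}V+\cdots+E^*_dV)$. Then $\prod_{j\le i}(A-\theta_jI)x=0$, so $x$ is killed by a polynomial in $A$ of degree $i+1$. On the other hand, expand $x=\sum_{j\ge i+1}c_jw_j$ and let $k\geq i+1$ be the largest index with $c_k\neq0$. The Hessenberg form of $A$ on $\lbrace w_j\rbrace$ pushes the top component of $x$ strictly upward with nonzero subdiagonal coefficients. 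So $p(A)x$ has a nonzero component on $w_{k+\deg p}$ whenever $k+\deg p\le d$, and $x$ generates an $A$-cyclic space of dimension at least $d-k+1$. This bounds $k$ from below, but it does not by itself exclude large $k$. The lower end must be combined with the symmetric argument for $A^*$ on $\lbrace u_j\rbrace$. If this count proves insufficient, I would instead form the matrix $P$ with $u_j=\sum_i P_{ij}w_i$ and show that the conjugation relations force the appropriate leading principal minors of $P$ to be nonzero. Transversality follows from that.

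Given transversality, $\dim V_i\ge 1$ follows from dimension counting, and directness from the fact that the flags are complementary. The inclusion $(A-\theta_iI)V_i\subseteq V_{i+1}$ holds because $A-\theta_iI$ maps $E_0V+\cdots+E_iV$ into $E_0V+\cdots+E_{i-1}V$, and $A$ raises the $E^*$-flag by at most one step. The subdiagonal entries are nonzero because otherwise $V_i+\cdots+V_d$ would be an $A$-invariant subspace that is not of the form $E^*$-flag, contradicting the Hessenberg structure.

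For the converse, assume $A$ is lower bidiagonal and $A^*$ is upper bidiagonal in a basis $\lbrace v_i\rbrace_{i=0}^d$, with distinct diagonals $\theta_i$ and $\theta^*_i$. Set $u_i=\prod_{j<i}(A-\theta_jI)\,v_0$, suitably normalized. This gives a triangular change of basis, and one checks that $u_i$ lies in $v_i+\cdots+v_d$ and is a $\theta_i$-eigenvector of $A$. In the basis $\lbrace u_i\rbrace$, $A$ is diagonal. The upper bidiagonal $A^*$ lowers the index by at most one in $v$-coordinates, which gives the Hessenberg shape with nonzero subdiagonal. The construction for $A^*$ is symmetric.
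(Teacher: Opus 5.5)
Your forward direction intersects the wrong pair of flags, and the step claiming $(A-\theta_iI)V_i\subseteq V_{i+1}$ and $(A^*-\theta^*_iI)V_i\subseteq V_{i-1}$ is false. The hypotheses only say that $A^*$ raises $U_i=E_0V+\cdots+E_iV$ by one step and that $A$ raises $U^*_i=E^*_0V+\cdots+E^*_iV$ by one step. They say nothing about how $A$ acts on the opposite flag $E^*_iV+\cdots+E^*_dV$.

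For your $V_i=U_i\cap(E^*_iV+\cdots+E^*_dV)$, what actually follows is $(A^*-\theta^*_iI)V_i\subseteq V_{i+1}$ and $AV_i\subseteq V_0+\cdots+V_i$. So $A^*$ comes out lower bidiagonal and $A$ merely upper triangular. Your own justification also defeats itself. It puts $(A-\theta_iI)V_i$ inside $E_0V+\cdots+E_{i-1}V$, and this meets $V_{i+1}\subseteq E^*_{i+1}V+\cdots+E^*_dV$ only in $0$ by the very transversality you want. That would force $A$ to be diagonal on $\{v_i\}$.

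Here is a concrete failure. Take $d=2$ over $\mathbb Q$ with $(\theta_0,\theta_1,\theta_2)=(0,1,3)$, $(\theta^*_0,\theta^*_1,\theta^*_2)=(0,1,2)$, $\phi_1=\phi_2=1$. Let $x_0,x_1,x_2$ be the basis giving the parameter-array form in Section~3. Your split basis is $v_0=x_2$, $v_1=x_1+2x_2$, $v_2=x_0+2x_1+2x_2$, and $Av_2=3v_2-3v_1+2v_0$. So $A$ is not bidiagonal in either ordering. Your prediction that both diagonals run in the same order $\theta_i$, $\theta^*_i$ is a symptom of the same error: in the true normal form the diagonal of $A$ runs $\theta_d,\dots,\theta_0$.

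The repair is to intersect the two flags the hypotheses control, with complementary indices: $V_i=U^*_i\cap U_{d-i}$. You can even write the basis down directly. Since $U^*_i$ is spanned by $w_0,Aw_0,\dots,A^iw_0$, the vector $w_0$ is cyclic for $A$. Hence $v_i=(A-\theta_{d-i+1}I)\cdots(A-\theta_dI)w_0$ $(0\le i\le d)$ is a basis with $(A-\theta_{d-i}I)v_i=v_{i+1}$ and $(A-\theta_0I)v_d=0$. So $A$ is lower bidiagonal with diagonal $\theta_d,\dots,\theta_0$ and subdiagonal entries $1$.

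By construction $\mathrm{span}(v_0,\dots,v_i)=U^*_i$, and by a dimension count $\mathrm{span}(v_i,\dots,v_d)=U_{d-i}$. Since $A^*$ preserves $U^*_i$ and maps $U_{d-i}$ into $U_{d-i+1}$, you get $A^*v_i\in\mathrm{span}(v_{i-1},v_i)$ with diagonal entry $\theta^*_i$. The superdiagonal entry is nonzero, since otherwise $U_{d-i}$ would be $A^*$-invariant, contradicting $E_{d-i+1}A^*E_{d-i}\neq 0$. The transversality problem you left open therefore disappears.

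In your converse, $\prod_{j<i}(A-\theta_jI)v_0$ is just a scalar multiple of $v_i$, not an eigenvector. Use $\prod_{j>i}(A-\theta_jI)v_i$ instead. Then list the resulting eigenbasis in reverse order to get the Hessenberg shape for $A^*$.
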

 
 \noindent In the study of Hessenberg pairs, it is convenient to bring in a related object called a Hessenberg system.
 
 \begin{definition}\label{def:HS} \rm  (See \cite[Definition~2.2]{godjali}.) A {\it Hessenberg system on $V$} is a sequence
 \begin{align*}
\Phi = (A; \lbrace E_i \rbrace_{i=0}^d; A^*; \lbrace E^*_i \rbrace_{i=0}^d)
 \end{align*}
 of elements in ${\rm End}(V)$ that satisfy the conditions (i)--(v) below:
 \begin{enumerate}
 \item[\rm (i)] each of $A, A^*$ is multiplicity-free;
 \item[\rm (ii)] $\lbrace E_i \rbrace_{i=0}^d$ is an ordering of the primitive idempotents of $A$;
 \item[\rm (iii)] $\lbrace E^*_i \rbrace_{i=0}^d$ is an ordering of the primitive idempotents of $A^*$;
\item[\rm (iv)] $E_i A^* E_j =  \begin{cases} 0 &\hbox{ \rm if  $i-j >1$},\\
                          \not=0 &\hbox{ \rm if  $i-j=1$}
                      \end{cases} \qquad \qquad (0 \leq i,j\leq d)$;
\item[\rm (v)]   $E^*_i A E^*_j =  \begin{cases} 0 &\hbox{ \rm if  $i-j>1$}, \\
                          \not=0 &\hbox{ \rm if  $i-j=1$}
                      \end{cases} \qquad \qquad (0 \leq i,j\leq d)$.
\end{enumerate}
 \end{definition}
  \begin{remark} \rm  What we call a Hessenberg system is called
 a thin Hessenberg system in \cite{godjali}.
 \end{remark}

 \noindent The Hessenberg pairs and systems are related as follows. Let 
$(A; \lbrace E_i \rbrace_{i=0}^d; A^*; \lbrace E^*_i \rbrace_{i=0}^d)$ denote a Hessenberg system on $V$. Then $A, A^*$ form a Hessenberg pair on $V$.
Conversely, let $A, A^*$ denote a Hessenberg pair on $V$. Let $\lbrace v_i \rbrace_{i=0}^d$ (resp. $\lbrace v^*_i \rbrace_{i=0}^d$) 
denote a basis for $V$ that satisfies Definition \ref{def:HP}(i) (resp.   Definition \ref{def:HP}(ii)). For $0 \leq i \leq d$
the vector $v_i$ (resp. $v^*_i$) is an eigenvector for $A$ (resp. $A^*$); let $E_i$ (resp. $E^*_i$) denote the corresponding primitive idempotent of $A$ (resp. $A^*$).
 Then the sequence
$(A; \lbrace E_i \rbrace_{i=0}^d; A^*; \lbrace E^*_i \rbrace_{i=0}^d)$ is a Hessenberg system on $V$.
 \medskip
 
 \noindent Let $\Phi = (A; \lbrace E_i \rbrace_{i=0}^d; A^*; \lbrace E^*_i \rbrace_{i=0}^d)$ denote a Hessenberg system on $V$. We now recall the parameter array of $\Phi$. This parameter
 array was introduced in \cite{godjali} and investigated thoroughly in \cite{godjali2}, \cite{JHL}. For our purpose, the following definition will suffice.
 The parameter array of $\Phi$ 
 is the unique sequence of scalars
 \begin{align*}
\bigl( \lbrace \theta_i \rbrace_{i=0}^d; \lbrace \theta^*_i \rbrace_{i=0}^d; \lbrace \phi_i \rbrace_{i=1}^d\bigr)
\end{align*}
with the following features (see \cite[Definition~3.1, Proposition~5.9]{godjali} and  \cite[Proposition~7.6, Lemma~7.7]{LSnotes}).
\begin{enumerate}
\item[\rm (i)]   $\theta_i$ is the eigenvalue of $A$ for the primitive idempotent $E_i$ $(0 \leq i \leq d)$;
 \item[\rm (ii)] $\theta^*_i$ is the eigenvalue of $A^*$ for the primitive idempotent $E^*_i$ $(0 \leq i \leq d)$;
\item[\rm (iii)] there exists a basis for $V$ with respect to which  $A$ and $A^*$ look as follows:
\begin{align*}
A : \quad \begin{pmatrix}  \theta_d &&&& &{\bf 0} \\ 
                                    1 &\theta_{d-1} &&&&\\
                                       &1&\theta_{d-2} &&& \\
                                       &&\cdot&\cdot&& \\
                                       &&&\cdot &\cdot&\\
                                      {\bf 0}&&&&1&\theta_0
                                       \end{pmatrix}, \qquad \qquad 
 A^*: \quad \begin{pmatrix} \theta^*_0&\phi_1 &&&& {\bf 0} \\ 
                                     &\theta^*_1&\phi_2&&&& \\
                                      &&\theta^*_2& \cdot &&\\
                                      &&&\cdot&\cdot &\\
                                      &&&&\cdot&\phi_d\\
                                    {\bf 0}&&&&&\theta^*_d
                                    \end{pmatrix}. 
\end{align*}
\end{enumerate}

\noindent For Hessenberg systems the concept of isomorphism is defined in \cite[Definition~2.5]{godjali}.

\begin{proposition} \label{prop:PA} {\rm (See \cite[Theorem~6.3]{godjali}.)} For a sequence of scalars
\begin{align} \label{eq:PA}
\bigl( \lbrace \theta_i \rbrace_{i=0}^d; \lbrace \theta^*_i \rbrace_{i=0}^d; \lbrace \phi_i \rbrace_{i=1}^d\bigr)
\end{align}
the following are equivalent:
\begin{enumerate}
\item[\rm (i)] there exists a Hessenberg system $\Phi$ on $V$ that has  parameter array \eqref{eq:PA};
\item[\rm (ii)] $\lbrace \theta_i \rbrace_{i=0}^d$ are mutually distinct, $\lbrace \theta^*_i \rbrace_{i=0}^d$ are mutually distinct, and  $\lbrace \phi_i \rbrace_{i=1}^d$ are nonzero.
\end{enumerate}
Assume that {\rm (i), (ii)} hold. Then $\Phi$ is unique up to isomorphism of Hessenberg systems.
\end{proposition}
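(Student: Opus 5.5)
The plan is to work throughout with the basis $\lbrace u_j\rbrace_{j=0}^d$ from feature (iii) of the parameter array. In that basis
\begin{align*}
A u_j = \theta_{d-j} u_j + u_{j+1}, \qquad\qquad A^* u_j = \theta^*_j u_j + \phi_j u_{j-1},
\end{align*}
with the conventions $u_{d+1}=0$ and $\phi_0 u_{-1}=0$. Every claim will be reduced to facts about two flags of subspaces. The first flag is $U_m={\rm span}\lbrace u_m,\ldots,u_d\rbrace$, which is $A$-invariant. The second is $U'_m={\rm span}\lbrace u_0,\ldots,u_m\rbrace$, which is $A^*$-invariant.

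For (i)$\Rightarrow$(ii), the $\theta_i$ are mutually distinct and the $\theta^*_i$ are mutually distinct, because $A$ and $A^*$ are multiplicity-free (Definition \ref{def:HS}(i)). It remains to show that each $\phi_i$ is nonzero. Suppose instead that $\phi_i=0$ for some $1\leq i\leq d$. Then $W=U_i$ is invariant under both $A$ and $A^*$. Since $A$ is lower bidiagonal with distinct diagonal entries, the $A$-invariant subspace $U_i$ equals $\sum_{k=0}^{d-i}E_kV$, because $A$ acts on $U_i$ with eigenvalues $\theta_{d-i},\ldots,\theta_0$. Invariance of $W$ under $A^*$ then forces $E_kA^*E_j=0$ whenever $k>d-i\geq j$. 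Taking $k=d-i+1$ and $j=d-i$ contradicts Definition \ref{def:HS}(iv). Hence every $\phi_i$ is nonzero.

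For (ii)$\Rightarrow$(i), I would define $A$ and $A^*$ on $V$ by the two matrices displayed in (iii), with respect to some basis $\lbrace u_j\rbrace_{j=0}^d$.
\begin{enumerate}
\item[\rm (a)] Each of $A$, $A^*$ is triangular with distinct diagonal entries, so each is multiplicity-free. Let $E_i$ (resp. $E^*_i$) be the primitive idempotent of $A$ for $\theta_i$ (resp. of $A^*$ for $\theta^*_i$).
\item[\rm (b)] For condition (iv), use $U_m=\sum_{k\leq d-m}E_kV$ together with $A^*U_m\subseteq U_{m-1}$. These give $E_iA^*E_j=0$ for $i-j>1$.
\item[\rm (c)] Still for (iv), take a nonzero $v\in E_jV$. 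Then $v\in U_{d-j}$, and $v$ has a nonzero $u_{d-j}$-coordinate. The vector $A^*v$ has $u_{d-j-1}$-coordinate $\phi_{d-j}$ times that coordinate, which is nonzero. The natural map $E_{j+1}V\to U_{d-j-1}/U_{d-j}$ is an isomorphism, so $E_{j+1}A^*v\neq 0$.
\item[\rm (d)] Condition (v) follows symmetrically from the flag $U'_m=\sum_{k\leq m}E^*_kV$. Here $AU'_m\subseteq U'_{m+1}$, and the coefficient $1$ of $u_{m+1}$ in $Au_m$ is nonzero. (In this direction the nonzero subdiagonal entries are the $1$'s, so the $\phi_i$ play no role.)
\end{enumerate}
Thus we obtain a Hessenberg system. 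The basis $\lbrace u_j\rbrace_{j=0}^d$ witnesses feature (iii), so by the uniqueness built into the definition its parameter array is \eqref{eq:PA}.

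For uniqueness, let $\Phi$ and $\Phi'$ be Hessenberg systems with the same parameter array, and take bases for each as in (iii). The linear isomorphism sending one basis to the other conjugates $A$ to $A'$ and $A^*$ to $A^{*\prime}$. By \eqref{eq;Eformula} each $E_i$ is a polynomial in $A$ with coefficients determined by the $\theta$'s, and similarly each $E^*_i$ is a polynomial in $A^*$. Hence the same map also conjugates $E_i$ to $E'_i$ and $E^*_i$ to $E^{*\prime}_i$, which is an isomorphism of Hessenberg systems.

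The main point needing care is the nonvanishing claims: showing $\phi_i\neq0$ in (i)$\Rightarrow$(ii), and the nonzero entries in (c) and (d). Both rest on identifying the invariant flags $U_m$ and $U'_m$ with partial sums of eigenspaces, which uses that the diagonal entries are distinct.
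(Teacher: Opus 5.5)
Your proof is correct. The paper gives no argument for this proposition: it quotes Godjali's classification of thin Hessenberg systems (his Theorem~6.3), and it also takes the existence of the basis in feature (iii) from the literature. So there is no in-paper proof to compare against, and your proposal supplies a self-contained one where the paper has only a citation.

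Your central device is to identify ${\rm span}\lbrace u_m,\ldots,u_d\rbrace$ with $E_0V+\cdots+E_{d-m}V$ and ${\rm span}\lbrace u_0,\ldots,u_m\rbrace$ with $E^*_0V+\cdots+E^*_mV$. This uses only that $A$ and $A^*$ are triangular with distinct diagonal entries in the basis $\lbrace u_j\rbrace_{j=0}^d$. It is essentially the split decomposition underlying Godjali's theory, so in spirit you follow the standard route. What your version buys is that everything reduces to a few elementary checks:
\begin{enumerate}
\item[\rm (a)] $\phi_i\neq 0$, because otherwise ${\rm span}\lbrace u_i,\ldots,u_d\rbrace$ would be invariant under both maps and would force $E_{d-i+1}A^*E_{d-i}=0$;
\item[\rm (b)] $E_{j+1}A^*E_j$ and $E^*_{j+1}AE^*_j$ are nonzero, by looking at the quotient of consecutive members of the appropriate flag;
\item[\rm (c)] uniqueness holds, by conjugating with the basis-matching map, which carries the idempotents along because each $E_i$ (resp. $E^*_i$) is a fixed polynomial in $A$ (resp. $A^*$) determined by the eigenvalues.
\end{enumerate}

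One point to keep in mind: in the direction (i)$\Rightarrow$(ii) you may start from the basis of feature (iii) only because the paper builds its existence into the definition of the parameter array. The genuinely nontrivial fact that every Hessenberg system admits such a basis is therefore not proved by your argument, just as it is not proved in the paper.
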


\noindent We have some comments.

\begin{lemma} \label{lem:dual} {\rm (See \cite[Definition~2.6 and Lemma~5.1]{godjali}.) }
Let  
\begin{align*}
\Phi = (A; \lbrace E_i \rbrace_{i=0}^d; A^*; \lbrace E^*_i \rbrace_{i=0}^d)
\end{align*}
 denote a Hessenberg system on $V$, with parameter array
 \begin{align*}
\bigl( \lbrace \theta_i \rbrace_{i=0}^d; \lbrace \theta^*_i \rbrace_{i=0}^d; \lbrace \phi_i \rbrace_{i=1}^d\bigr).
\end{align*}
Then the sequence 
\begin{align*}
\Phi^*=  (A^*; \lbrace E^*_i \rbrace_{i=0}^d; A; \lbrace E_i \rbrace_{i=0}^d)
\end{align*}
 is a Hessenberg system on $V$,
with parameter array
\begin{align*}
\bigl( \lbrace \theta^*_i \rbrace_{i=0}^d; \lbrace \theta_i \rbrace_{i=0}^d; \lbrace \phi_{d-i+1} \rbrace_{i=1}^d\bigr).
\end{align*}
\end{lemma}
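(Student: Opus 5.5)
The plan is to check the axioms of Definition~\ref{def:HS} for $\Phi^*$ directly, and then to build the basis required in feature (iii) of the parameter array for $\Phi^*$ by reversing and rescaling the basis that Definition~\ref{def:HS}'s parameter array provides for $\Phi$. Since the parameter array is the unique sequence with features (i)--(iii), exhibiting such a basis identifies it.

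First, $\Phi^*$ is a Hessenberg system. The conditions of Definition~\ref{def:HS} are symmetric under the swap $A \leftrightarrow A^*$, $E_i \leftrightarrow E^*_i$. Conditions (i) and (ii), (iii) are simply interchanged. Condition (iv) for $\Phi^*$ reads $E^*_iAE^*_j = 0$ if $i-j>1$ and $\neq 0$ if $i-j=1$, which is condition (v) for $\Phi$. Likewise, condition (v) for $\Phi^*$ is condition (iv) for $\Phi$. Features (i) and (ii) of the parameter array of $\Phi^*$ clearly give $\{\theta^*_i\}_{i=0}^d$ and $\{\theta_i\}_{i=0}^d$, in that order.

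For feature (iii), let $\{v_i\}_{i=0}^d$ be a basis as in feature (iii) for $\Phi$. Then
\begin{align*}
Av_j = \theta_{d-j}v_j + v_{j+1}, \qquad A^*v_j = \theta^*_j v_j + \phi_j v_{j-1},
\end{align*}
with the conventions $v_{d+1}=0$ and $\phi_0 v_{-1}=0$. Set $w_i = v_{d-i}$ for $0\le i\le d$. In this basis
\begin{align*}
A w_i = \theta_i w_i + w_{i-1}, \qquad A^* w_i = \theta^*_{d-i} w_i + \phi_{d-i} w_{i+1}.
\end{align*}
So $A^*$ is lower bidiagonal with diagonal $\theta^*_d,\ldots,\theta^*_0$, and $A$ is upper bidiagonal with diagonal $\theta_0,\ldots,\theta_d$. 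However, the subdiagonal of $A^*$ is $\phi_d,\ldots,\phi_1$ rather than all $1$'s.

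To fix this, rescale. Define $c_0=1$ and $c_{i+1}=c_i\phi_{d-i}$, so that $c_i=\phi_d\phi_{d-1}\cdots\phi_{d-i+1}$, which is nonzero. Put $u_i=c_iw_i$. Then
\begin{align*}
A^*u_i=\theta^*_{d-i}u_i+u_{i+1}, \qquad Au_i=\theta_iu_i+(c_i/c_{i-1})u_{i-1}=\theta_iu_i+\phi_{d-i+1}u_{i-1}.
\end{align*}
Thus, with respect to $\{u_i\}_{i=0}^d$, the maps $A^*$ and $A$ have exactly the shapes required in feature (iii), with the superdiagonal of $A$ equal to $\phi_d,\phi_{d-1},\ldots,\phi_1$, i.e. $\{\phi_{d-i+1}\}_{i=1}^d$. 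By uniqueness of the parameter array, the parameter array of $\Phi^*$ is as claimed.

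The only step needing care is the index bookkeeping in the rescaling, namely verifying that column $i$ of $A$ acquires the entry $\phi_{d-i+1}$. There is no conceptual obstacle.
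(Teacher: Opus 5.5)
Your proof is correct. It differs from the paper only in that the paper offers no argument of its own for this lemma: it cites Godjali (his Definition~2.6 and Lemma~5.1), where the parameter array is introduced via his Definition~3.1, and the paper connects that to the matrix description (iii) only through further citations.

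You instead stay entirely inside the characterization the paper adopts. The axioms of Definition~\ref{def:HS} are visibly symmetric under $A\leftrightarrow A^*$, $E_i\leftrightarrow E^*_i$, so $\Phi^*$ is a Hessenberg system. Reversing the basis $v_i\mapsto v_{d-i}$ and rescaling to $u_i=\phi_d\phi_{d-1}\cdots\phi_{d-i+1}v_{d-i}$ puts $A^*$ and $A$ in the required normal forms, with superdiagonal $\phi_d,\ldots,\phi_1$ for $A$. Your index bookkeeping checks out, including $c_i/c_{i-1}=\phi_{d-i+1}$.

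Your route gives a short, self-contained verification. It rests on two facts the paper states without proof at this point:
\begin{itemize}
\item $\phi_i\neq 0$ (Proposition~\ref{prop:PA}), which you need for the rescaling;
\item uniqueness of the scalars in (iii), which you need to identify the array.
\end{itemize}
The uniqueness is easy to supply if you want full completeness. Any basis as in (iii) satisfies $v_0\in E^*_0V$ and $v_{j+1}=(A-\theta_{d-j}I)v_j$, so $\Phi$ determines it up to a common nonzero factor, and hence determines the $\phi_i$.
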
 

\begin{definition} \label{def:dual} \rm Referring to Lemma \ref{lem:dual}, we call $\Phi^*$ the {\it dual of $\Phi$}.
For any object $h$ attached to $\Phi$, let $h^*$ denote the corresponding object attached to $\Phi^*$.
\end{definition}

\section{Circular Hessenberg pairs and systems}

\noindent  In \cite{JHL}, Jae-ho Lee introduced a family of Hessenberg pairs and systems, said to be circular.
In this section, we describe the circular Hessenberg pairs and systems.

 \begin{definition} \label{def:HMc} \rm (See \cite[Section~2]{JHL}.)
 A Hessenberg matrix $M \in {\rm Mat}_{d+1} (\mathbb F)$ is called {\it circular} whenever 
 \begin{align*}
 M_{i,j}=  \begin{cases} 0 &\hbox{ \rm if  $1<j-i<d$},\\
                          \not=0 &\hbox{ \rm if  $j-i=d$}
                      \end{cases} \qquad \qquad (0 \leq i,j\leq d).
 \end{align*}
 \end{definition}
 
 \begin{example} \rm A circular Hessenberg matrix with $d=5$ looks as follows:
 \begin{align*}
 \text{Circular Hessenberg:} \qquad
\begin{pmatrix}
* & *  & 0 & 0& 0 & \bullet \\
\bullet & * & * & 0 & 0 & 0  \\
 0 & \bullet &  * & * & 0 & 0  \\
0 &  0 & \bullet &  * & * & 0 \\
0 & 0 &  0 & \bullet &  * & * \\
0 & 0 & 0 &  0 & \bullet &  *  \\
\end{pmatrix}.
\end{align*}
Here
 $\bullet$ denotes an entry that must be nonzero,
and $*$ denotes an entry that might be nonzero.
 \end{example}
 
 \begin{definition} \label{def:HPc} \rm (See \cite[Definition~2.9]{JHL}.)
 A {\it circular Hessenberg pair on $V$} is an ordered pair $A, A^*$ of elements in ${\rm End}(V)$ such that:
 \begin{enumerate}
 \item[\rm (i)]  there exists a basis for $V$ with respect to which the matrix representing $A$ is diagonal and the matrix representing $A^*$ is circular Hessenberg;
 \item[\rm (ii)] there exists a basis for $V$ with respect to which the matrix representing $A^*$ is diagonal and the matrix representing $A$ is circular Hessenberg.
 \end{enumerate}
 \end{definition}
 
\noindent We note that a circular Hessenberg pair on $V$ is a Hessenberg pair on $V$.

\begin{definition}\label{def:circ} \rm (See \cite[Definition~2.11]{JHL}.) 
A {\it circular Hessenberg system on $V$} is a Hessenberg system 
\begin{align*}
\Phi = (A; \lbrace E_i \rbrace_{i=0}^d; A^*; \lbrace E^*_i \rbrace_{i=0}^d)
\end{align*}
on $V$ such that:
\begin{enumerate}
\item[\rm (i)] $E_i A^* E_j =  \begin{cases} 0 &\hbox{ \rm if  $1 < j-i<d$},\\
                          \not=0 &\hbox{ \rm if  $j-i=d$}
                      \end{cases} \qquad \qquad (0 \leq i,j\leq d)$;
\item[\rm (ii)]   $E^*_i A E^*_j =  \begin{cases} 0 &\hbox{ \rm if  $1 < j-i<d$}, \\
                          \not=0 &\hbox{ \rm if  $j-i=d$}
                      \end{cases} \qquad \qquad (0 \leq i,j\leq d)$.
\end{enumerate}
\end{definition} 

\noindent We clarify how circular Hessenberg pairs and systems are related.  Let 
$(A; \lbrace E_i \rbrace_{i=0}^d; A^*; \lbrace E^*_i \rbrace_{i=0}^d)$ denote a circular Hessenberg system on $V$. Then $A, A^*$ form a circular Hessenberg pair on $V$.
Conversely, let $A, A^*$ denote a circular Hessenberg pair on $V$. Let $\lbrace v_i \rbrace_{i=0}^d$ (resp. $\lbrace v^*_i \rbrace_{i=0}^d$) 
denote a basis for $V$ that satisfies Definition \ref{def:HPc}(i) (resp.   Definition \ref{def:HPc}(ii)). For $0 \leq i \leq d$
the vector $v_i$ (resp. $v^*_i$) is an eigenvector for $A$ (resp. $A^*$); let $E_i$ (resp. $E^*_i$) denote the corresponding primitive idempotent of $A$ (resp. $A^*$).
 Then the sequence
$(A; \lbrace E_i \rbrace_{i=0}^d; A^*; \lbrace E^*_i \rbrace_{i=0}^d)$ is a circular Hessenberg system on $V$.
 \medskip

\begin{lemma} \label{lem:dualC} A Hessenberg system $\Phi$ is circular if and only if the Hessenberg system $\Phi^*$ is circular.
\end{lemma}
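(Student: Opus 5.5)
The plan is to unwind the definitions and observe that the circularity conditions are symmetric under the swap $\Phi \mapsto \Phi^*$. By Lemma \ref{lem:dual}, if $\Phi = (A; \lbrace E_i \rbrace_{i=0}^d; A^*; \lbrace E^*_i \rbrace_{i=0}^d)$ is a Hessenberg system, then
\begin{align*}
\Phi^* = (A^*; \lbrace E^*_i \rbrace_{i=0}^d; A; \lbrace E_i \rbrace_{i=0}^d)
\end{align*}
is also a Hessenberg system. Conversely, applying Lemma \ref{lem:dual} to $\Phi^*$ shows that $(\Phi^*)^* = \Phi$. So it suffices to prove one direction: if $\Phi$ is circular then so is $\Phi^*$. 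The reverse direction then follows by applying this to $\Phi^*$.

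Assume $\Phi$ is circular, and write down Definition \ref{def:circ} for $\Phi^*$. The role of ``$A$'' is played by $A^*$, ``$E_i$'' by $E^*_i$, ``$A^*$'' by $A$, and ``$E^*_i$'' by $E_i$. Condition (i) for $\Phi^*$ therefore reads: $E^*_i A E^*_j$ is $0$ if $1<j-i<d$ and nonzero if $j-i=d$. This is exactly condition (ii) for $\Phi$. Likewise, condition (ii) for $\Phi^*$ reads: $E_i A^* E_j$ is $0$ if $1<j-i<d$ and nonzero if $j-i=d$. This is exactly condition (i) for $\Phi$. Since both hold for $\Phi$, both hold for $\Phi^*$, so $\Phi^*$ is circular.

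There is no real obstacle here; the only point needing care is the bookkeeping of which idempotents and which map occupy which slot in the dual system. Note that, unlike the parameter array (where the $\phi_i$ get reversed), the indexing of the idempotents is not reversed in $\Phi^*$. So the conditions on $j-i$ transfer verbatim, with no re-indexing needed.
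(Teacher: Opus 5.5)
Your proof is correct and matches the paper's, which simply cites Definitions \ref{def:dual} and \ref{def:circ}. Dualizing swaps $A \leftrightarrow A^*$ and $E_i \leftrightarrow E^*_i$ without reindexing, so the two circularity conditions for $\Phi^*$ are exactly conditions (ii) and (i) for $\Phi$; this already gives both directions at once, so the appeal to $(\Phi^*)^*=\Phi$ is harmless but not needed.
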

\begin{proof} By Definitions \ref{def:dual}, \ref{def:circ}.
\end{proof}

\section{The tridiagonal relations} 

In \cite{JHL}, Jae-ho Lee classified up to isomorphism the circular Hessenberg systems that satisfy a certain condition. Lee conjectured that
the condition is superfluous; it is satisfied by any circular Hessenberg system. We now explain the condition.
\medskip

\noindent
For the rest of this paper,  fix  a  Hessenberg system
\begin{align*}
\Phi = (A; \lbrace E_i \rbrace_{i=0}^d; A^*; \lbrace E^*_i \rbrace_{i=0}^d)
\end{align*}
on $V$, with parameter array
\begin{align*}
\bigl( \lbrace \theta_i \rbrace_{i=0}^d; \lbrace \theta^*_i \rbrace_{i=0}^d; \lbrace \phi_i \rbrace_{i=1}^d\bigr).
\end{align*}

\begin{conjecture} \label{conj} \rm (See \cite[Conjecture~2.13]{JHL}.) Assume that the Hessenberg system $\Phi$ is circular. Then there exist scalars
 $\beta, \gamma, \gamma^*, \varrho, \varrho^*$  such that both
\begin{align}
0 &= \lbrack A, A^2 A^*-\beta A A^* A + A^* A^2 - \gamma(AA^*+A^* A) -\varrho A^* \rbrack,   \label{eq:CTD1} \\
0 &= \lbrack A^*, A^{*2} A-\beta A^* A A^* + A A^{*2} - \gamma^*(A^*A+A A^*) -\varrho^* A \rbrack. \label{eq:CTD2}
\end{align}
\end{conjecture}
\noindent The relations \eqref{eq:CTD1}, \eqref{eq:CTD2} are called the {\it tridiagonal relations}, see \cite[Lemma~5.4]{tSub3}, \cite{qSerre}.
\medskip

\noindent In \cite{JHL}, Jae-ho Lee classified up to isomorphism  the circular Hessenberg systems
that satisfy the condition in Conjecture \ref{conj}.
\medskip

\noindent Our goal in this paper is to prove Conjecture \ref{conj}. 
\medskip

\noindent  For $d=2$, the Conjecture \ref{conj} is implied by the following result about general Hessenberg systems.
\begin{lemma}  Assume $d=2$. Then $\Phi$ satisfies the tridiagonal relations  \eqref{eq:CTD1}, \eqref{eq:CTD2} with 
\begin{align*}
&\beta=-1, \qquad \quad \gamma=\theta_0 + \theta_1 + \theta_2, \qquad \qquad  \gamma^*=\theta^*_0 + \theta^*_1 + \theta^*_2, \\
&\varrho = - \theta_0 \theta_1 - \theta_0 \theta_2 - \theta_1 \theta_2, \qquad \qquad \varrho^* = - \theta^*_0 \theta^*_1 - \theta^*_0 \theta^*_2 - \theta^*_1 \theta^*_2.
\end{align*}
\end{lemma}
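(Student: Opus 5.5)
With the given values, the expression inside the commutator in \eqref{eq:CTD1} becomes
\begin{align*}
A^2A^* + AA^*A + A^*A^2 - e_1(AA^*+A^*A) + e_2 A^*,
\end{align*}
where $e_1=\theta_0+\theta_1+\theta_2$ and $e_2=\theta_0\theta_1+\theta_0\theta_2+\theta_1\theta_2$. Let $e_3=\theta_0\theta_1\theta_2$. Since $d=2$, Lemma \ref{lem:MF} shows that $A$ is multiplicity-free with eigenvalues $\theta_0,\theta_1,\theta_2$. Its minimal polynomial is therefore $(\lambda-\theta_0)(\lambda-\theta_1)(\lambda-\theta_2)$, which gives
\begin{align*}
A^3 = e_1A^2 - e_2 A + e_3 I.
\end{align*}
The plan is to expand the commutator in \eqref{eq:CTD1} and observe that it telescopes to $[A^3 - e_1A^2 + e_2A, A^*]$. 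That commutator is $[e_3 I, A^*]=0$. The relation \eqref{eq:CTD2} then follows by the same argument applied to the dual system $\Phi^*$ of Lemma \ref{lem:dual}, whose eigenvalue sequence is $\lbrace\theta^*_i\rbrace_{i=0}^2$.

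For the expansion, I will compute the three pieces separately:
\begin{align*}
&[A, A^2A^*+AA^*A+A^*A^2] = A^3A^* + A^2A^*A + AA^*A^2 - A^2A^*A - AA^*A^2 - A^*A^3 = A^3A^*-A^*A^3,\\
&[A, AA^*+A^*A] = A^2A^* - A^*A^2,\\
&[A, A^*] = AA^* - A^*A.
\end{align*}
In the first line the middle terms cancel pairwise. Combining with the coefficients $1$, $-e_1$, $e_2$ gives
\begin{align*}
[A^3,A^*] - e_1[A^2,A^*] + e_2[A,A^*] = [A^3-e_1A^2+e_2A, A^*] = [e_3I, A^*] = 0,
\end{align*}
which is \eqref{eq:CTD1} with $\beta=-1$, $\gamma=e_1$, $\varrho=-e_2$.

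For \eqref{eq:CTD2}, I will apply the identical computation to $\Phi^*$, with the roles of $A$ and $A^*$ interchanged and $\theta_i$ replaced by $\theta^*_i$. This yields $\gamma^*$ and $\varrho^*$ as stated and the same $\beta=-1$.

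There is no real obstacle. The only point needing care is checking the sign conventions: $\beta=-1$ turns $-\beta AA^*A$ into $+AA^*A$, and $\varrho=-e_2$ gives the term $+e_2A^*$. Note that the argument uses only $\dim V=3$ and the multiplicity-freeness of $A$ and $A^*$; the Hessenberg structure beyond Lemma \ref{lem:MF} is not needed.
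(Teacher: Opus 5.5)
Your proof is correct, and it takes a different route from the paper.

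The paper writes $A$ and $A^*$ as the explicit $3\times 3$ matrices coming from the parameter array. These are $A$ lower bidiagonal with diagonal $\theta_2,\theta_1,\theta_0$ and $1$'s below, and $A^*$ upper bidiagonal with diagonal $\theta^*_0,\theta^*_1,\theta^*_2$ and $\phi_1,\phi_2$ above. It then checks \eqref{eq:CTD1} and \eqref{eq:CTD2} by direct matrix multiplication.

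You instead notice that at $\beta=-1$ the expression telescopes to $[p(A),A^*]$ with $p(\lambda)=\lambda^3-e_1\lambda^2+e_2\lambda$. Since $(A-\theta_0 I)(A-\theta_1 I)(A-\theta_2 I)=0$, you get $p(A)=e_3I$. You then handle \eqref{eq:CTD2} by duality through Lemma \ref{lem:dual}, whereas the paper verifies both relations directly.

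What your approach buys:
\begin{itemize}
\item It is coordinate-free.
\item It explains where the specific values of $\beta,\gamma,\varrho$ come from: they are exactly what turns the relation into a commutator with the characteristic polynomial of $A$.
\item It shows that \eqref{eq:CTD1} holds for a completely arbitrary $A^*$. The only input is that $A$ is annihilated by $(\lambda-\theta_0)(\lambda-\theta_1)(\lambda-\theta_2)$, which for a $3\times 3$ matrix is just Cayley--Hamilton. So neither the Hessenberg structure nor even diagonalizability matters.
\end{itemize}

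The paper's verification is shorter to state but hides why the relations hold.
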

\begin{proof} Since $\Phi$ is a Hessenberg system, we may represent $A$ and $A^*$ as matrices
\begin{align*}
A = \begin{pmatrix}  \theta_2 &0 & 0 \\ 
                                    1 &\theta_1 &0\\
                                      0 &1&\theta_0 \\
                                       \end{pmatrix}, \qquad \qquad 
 A^* = \begin{pmatrix} \theta^*_0&\phi_1 &0\\ 
                                   0 &\theta^*_1&\phi_2 \\
                                    0&0&\theta^*_2
                                    \end{pmatrix}. 
\end{align*}
These matrices satisfy  \eqref{eq:CTD1}, \eqref{eq:CTD2} by matrix multiplication.
\end{proof}

\noindent We now give some results about $\Phi$ that will be used to prove Conjecture \ref{conj} for $d\geq 3$.
\medskip

\noindent For notational convenience, define
\begin{align}
\vartheta_i = \phi_i - (\theta^*_i - \theta^*_0)(\theta_{d-i+1}-\theta_0) \qquad \qquad (1 \leq i \leq d)        \label{eq:vth}
\end{align}
and
\begin{align*}
\vartheta_0 = 0, \qquad \qquad \vartheta_{d+1}=0.
\end{align*}
By Lemma \ref{lem:dual} and Definition \ref{def:dual},
\begin{align*}
\vartheta^*_i = \vartheta_{d-i+1} \qquad \qquad (0 \leq i \leq d+1).
\end{align*}

\noindent  We will be discussing sequences of scalars that are $\beta$-recurrent or  $(\beta, \gamma)$-recurrent or  $(\beta, \gamma, \varrho)$-recurrent.
See \cite[Section~8]{LS99} for the definitions and basic facts about these recurrences.

\begin{lemma} \label{lem:TD1} {\rm (See \cite[Lemma~12.5]{LS99}.)}
Assume $d\geq 3$ and pick  $\beta, \gamma, \varrho \in \mathbb F$.  Then 
\begin{align*}
0 = \lbrack A, A^2 A^*-\beta A A^* A + A^* A^2 - \gamma(AA^*+A^* A) -\varrho A^* \rbrack
\end{align*}
if and only if the following conditions hold:
\begin{enumerate}
\item[\rm (i)] the sequence $\lbrace \theta_i \rbrace_{i=0}^d$ is $(\beta, \gamma, \varrho)$-recurrent;
\item[\rm (ii)] the sequence $\lbrace \theta^*_i \rbrace_{i=0}^d$ is $\beta$-recurrent;
\item[\rm (iii)] the sequence $\lbrace \vartheta_i \rbrace_{i=0}^{d+1}$ is $\beta$-recurrent.
\end{enumerate}
\end{lemma}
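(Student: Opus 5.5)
We work in the basis from the parameter array description, in which $A$ is lower bidiagonal with diagonal $\theta_d,\theta_{d-1},\ldots,\theta_0$ and subdiagonal entries $1$, and $A^*$ is upper bidiagonal with diagonal $\theta^*_0,\ldots,\theta^*_d$ and superdiagonal $\phi_1,\ldots,\phi_d$. It is more convenient to reindex the basis as $v_0,\ldots,v_d$ with $Av_i=\theta_{d-i}v_i+v_{i+1}$ (and $v_{d+1}=0$). Write
\begin{align*}
C=A^2A^*-\beta AA^*A+A^*A^2-\gamma(AA^*+A^*A)-\varrho A^*
\end{align*}
and $D=[A,C]$. Here $A$ has bandwidth $(1,0)$ and $A^*$ has bandwidth $(0,1)$, so every term of $D$ is a word containing three copies of $A$ and one copy of $A^*$. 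Hence $D_{i,j}=0$ unless $-1\le i-j\le 3$. The relation $D=0$ is therefore a finite system of polynomial identities sorted by the diagonal $k=i-j\in\{-1,0,1,2,3\}$. The plan is to compute each diagonal of $D$ in turn.

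\emph{Diagonal $k=-1$.} This entry only sees the superdiagonal of $A^*$ and the diagonal of $A$. The word computation gives $\phi\cdot p(\theta_{d-i},\theta_{d-i-1})$, where
\begin{align*}
p(x,y)=(x-y)\bigl(x^2-\beta xy+y^2-\gamma(x+y)-\varrho\bigr).
\end{align*}
Since $\phi\ne0$ and the eigenvalues are distinct, this says $\theta_{i-1}^2-\beta\theta_{i-1}\theta_i+\theta_i^2-\gamma(\theta_{i-1}+\theta_i)=\varrho$ for all $i$. This is exactly $(\beta,\gamma,\varrho)$-recurrence, which is condition (i).

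\emph{Diagonal $k=3$.} This entry involves only the subdiagonal $1$'s of $A$ and the diagonal of $A^*$. It yields $\theta^*_{i-1}-\beta\theta^*_i+\beta\theta^*_{i+1}-\theta^*_{i+2}=0$, the $\beta$-recurrence of the $\theta^*$. This is condition (ii), and it needs $d\ge3$ for the equations to be nonvacuous.

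\emph{Diagonals $k=0,1,2$.} These mix $\phi$, $\theta$ and $\theta^*$. Here I would follow the approach of \cite[Lemma~12.5]{LS99}. After using (i) and (ii), the three families should collapse to a single condition on the $\vartheta_i$. The definition of $\vartheta_i$, which subtracts $(\theta^*_i-\theta^*_0)(\theta_{d-i+1}-\theta_0)$ from $\phi_i$, is precisely the shift that absorbs the cross terms. The main step is to show that diagonal $k=2$ gives $\vartheta_{i-1}-\beta\vartheta_i+\beta\vartheta_{i+1}-\vartheta_{i+2}=0$ for $1\le i\le d-1$, with the boundary conventions $\vartheta_0=\vartheta_{d+1}=0$ covering the end rows. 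We must also show that diagonals $k=0$ and $k=1$ then hold automatically, or are equivalent to the same recurrence.

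The main obstacle is this middle computation. The quickest route is probably to reduce to the Leonard-system argument in LS99: their Lemma~12.5 is proved using only the split (bidiagonal) form with parameters $\theta,\theta^*,\phi$, together with the vanishing of the appropriate band entries, and it never uses positivity or Leonard-specific conditions. So I would verify that their derivation goes through verbatim for an arbitrary split form, and cite it for both directions. For the converse direction, we assume (i)--(iii) and substitute them back into the entries on each diagonal to see that all entries of $D$ vanish.
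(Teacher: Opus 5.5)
Your route is essentially the paper's. The paper gives no computation and simply invokes \cite[Lemma~12.5]{LS99} for the split form, with the $\theta_i$ listed in reverse order; this reversal is why $\vartheta_i$ involves $\theta_{d-i+1}-\theta_0$. Your plan likewise rests on that lemma for the substantive part.

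Your explicit recurrences have the wrong coefficient, however. Expanding the commutator gives
\[
D=A^3A^*-(\beta+1)A^2A^*A+(\beta+1)AA^*A^2-A^*A^3-\gamma(A^2A^*-A^*A^2)-\varrho(AA^*-A^*A).
\]
So the $(j+3,j)$ entry is $\theta^*_j-(\beta+1)\theta^*_{j+1}+(\beta+1)\theta^*_{j+2}-\theta^*_{j+3}$ for $0\le j\le d-3$. This is exactly $\beta$-recurrence as defined in \cite[Section~8]{LS99}. The relation you wrote, with $\beta$ in place of $\beta+1$, is $(\beta-1)$-recurrence. For instance, with $d=3$ and $\beta=0$ it would force $\theta^*_0=\theta^*_3$, whereas the paper's $d=3$ section has $\theta^*_0-\theta^*_1+\theta^*_2-\theta^*_3=0$.

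The same fix is needed in your $k=2$ claim, and with it that claim is correct. Write $\phi_i=\vartheta_i+(\theta^*_i-\theta^*_0)(\theta_{d-i+1}-\theta_0)$ for $1\le i\le d$, and set $\phi_0=\phi_{d+1}=0$ to match $\vartheta_0=\vartheta_{d+1}=0$. Then use (ii) together with the $(\beta,\gamma)$-recurrence of the $\theta_i$, which follows from (i) because the $\theta_i$ are distinct. A direct check gives $D_{j+2,j}=\vartheta_j-(\beta+1)\vartheta_{j+1}+(\beta+1)\vartheta_{j+2}-\vartheta_{j+3}$ for $0\le j\le d-2$.

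So the only thing you genuinely need to import is that the diagonals $k=0,1$ vanish once (i)--(iii) hold. The paper, by contrast, imports the whole statement.
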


\begin{lemma} \label{lem:TD2} 
 Assume $d\geq 3$ and pick $\beta, \gamma^*, \varrho^* \in \mathbb F$.  Then 
\begin{align*}
0 = \lbrack A^*, A^{*2} A-\beta A^* A A^* + A A^{*2} - \gamma^*(A^*A+A A^*) -\varrho^* A \rbrack
\end{align*}
if and only if the following conditions hold:
\begin{enumerate}
\item[\rm (i)] the sequence $\lbrace \theta^*_i \rbrace_{i=0}^d$ is $(\beta, \gamma^*, \varrho^*)$-recurrent;
\item[\rm (ii)] the sequence $\lbrace \theta_i \rbrace_{i=0}^d$ is $\beta$-recurrent;
\item[\rm (iii)] the sequence $\lbrace \vartheta_i \rbrace_{i=0}^{d+1}$ is $\beta$-recurrent.
\end{enumerate}
\end{lemma}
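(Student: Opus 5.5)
Lemma \ref{lem:TD2} follows from Lemma \ref{lem:TD1} applied to the dual system $\Phi^*$ of Lemma \ref{lem:dual}, followed by a reindexing of the $\vartheta$-sequence.

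By Lemma \ref{lem:dual}, $\Phi^*=(A^*;\lbrace E^*_i\rbrace_{i=0}^d;A;\lbrace E_i\rbrace_{i=0}^d)$ is a Hessenberg system on $V$. Its parameter array is
\begin{align*}
\bigl(\lbrace \theta^*_i\rbrace_{i=0}^d;\lbrace\theta_i\rbrace_{i=0}^d;\lbrace\phi_{d-i+1}\rbrace_{i=1}^d\bigr).
\end{align*}
Lemma \ref{lem:TD1} holds for an arbitrary Hessenberg system with $d\geq 3$, so we apply it to $\Phi^*$. In that lemma the roles of $A, A^*$ are played by $A^*, A$, and the roles of $\theta_i,\theta^*_i$ are played by $\theta^*_i,\theta_i$. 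Writing $\gamma^*,\varrho^*$ for the free scalars, the relation
\begin{align*}
0=\lbrack A^*, A^{*2}A-\beta A^*AA^*+AA^{*2}-\gamma^*(A^*A+AA^*)-\varrho^*A\rbrack
\end{align*}
holds if and only if three conditions hold. First, $\lbrace\theta^*_i\rbrace_{i=0}^d$ is $(\beta,\gamma^*,\varrho^*)$-recurrent. Second, $\lbrace\theta_i\rbrace_{i=0}^d$ is $\beta$-recurrent. Third, $\lbrace\vartheta^*_i\rbrace_{i=0}^{d+1}$ is $\beta$-recurrent. The first two conditions are exactly (i) and (ii) of Lemma \ref{lem:TD2}.

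For the third condition, we use $\vartheta^*_i=\vartheta_{d-i+1}$ for $0\leq i\leq d+1$, which was recorded above via Lemma \ref{lem:dual} and Definition \ref{def:dual}. Hence $\lbrace\vartheta^*_i\rbrace_{i=0}^{d+1}$ is the sequence $\lbrace\vartheta_i\rbrace_{i=0}^{d+1}$ read in reverse order. It remains to check that $\beta$-recurrence is preserved under reversal. By \cite[Section~8]{LS99}, a sequence $\lbrace x_i\rbrace$ is $\beta$-recurrent when
\begin{align*}
x_{i-2}-(\beta+1)x_{i-1}+(\beta+1)x_i-x_{i+1}=0
\end{align*}
for all admissible $i$. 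Replacing each index $j$ by $d+1-j$ turns this expression into its own negative, up to relabeling $i$. So the set of conditions defining $\beta$-recurrence is the same for a sequence and its reversal, and condition (iii) for $\Phi^*$ is equivalent to (iii) of Lemma \ref{lem:TD2}.

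The one point that needs care is confirming that $\vartheta^*$, computed from the parameter array of $\Phi^*$ by \eqref{eq:vth}, really equals $\vartheta_{d-i+1}$. That identity was already asserted in the text, so the proof reduces to citing it together with the reversal-symmetry of the recurrence.
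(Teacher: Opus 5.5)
Your proposal is correct and is the paper's own argument: the paper's entire proof is to apply Lemma~\ref{lem:TD1} to $\Phi^*$. You also spell out what the paper leaves implicit, namely that $\vartheta^*_i=\vartheta_{d-i+1}$ and that $\beta$-recurrence is invariant under reversing a sequence, and both of these details are right.
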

\begin{proof} Apply  Lemma \ref{lem:TD1} to $\Phi^*$.
\end{proof}

\begin{proposition}\label{prop:sum} Assume $d\geq 3$. Then for $\beta \in \mathbb F$ the following {\rm (i)--(iv)} are equivalent:
\begin{enumerate}
\item[\rm (i)] there exist $\gamma, \gamma^*, \varrho, \varrho^* \in \mathbb F$ such that both
\begin{align*}
0 &= \lbrack A, A^2 A^*-\beta A A^* A + A^* A^2 - \gamma(AA^*+A^* A) -\varrho A^* \rbrack,   \\
0 &= \lbrack A^*, A^{*2} A-\beta A^* A A^* + A A^{*2} - \gamma^*(A^*A+A A^*) -\varrho^* A \rbrack;
\end{align*}
\item[\rm (ii)] there exist $\gamma, \varrho \in \mathbb F$ such that
\begin{align*}
0 = \lbrack A, A^2 A^*-\beta A A^* A + A^* A^2 - \gamma(AA^*+A^* A) -\varrho A^* \rbrack;
\end{align*}
\item[\rm (iii)] there exist $ \gamma^*, \varrho^* \in \mathbb F$ such that
\begin{align*}
0 = \lbrack A^*, A^{*2} A-\beta A^* A A^* + A A^{*2} - \gamma^*(A^*A+A A^*) -\varrho^* A \rbrack;
\end{align*}
\item[\rm (iv)] each of
$\lbrace \theta_i \rbrace_{i=0}^d$, $\lbrace \theta^*_i \rbrace_{i=0}^d$, $\lbrace \vartheta_i \rbrace_{i=0}^{d+1}$ is $\beta$-recurrent.
\end{enumerate}
\end{proposition}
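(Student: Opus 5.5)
The plan is to prove the cycle (i)$\Rightarrow$(ii)$\Rightarrow$(iv)$\Rightarrow$(i), and then (i)$\Rightarrow$(iii)$\Rightarrow$(iv). The implications (i)$\Rightarrow$(ii) and (i)$\Rightarrow$(iii) are trivial, since (ii) and (iii) simply retain one of the two relations in (i).

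For (ii)$\Rightarrow$(iv), assume there exist $\gamma, \varrho$ such that the first relation holds. By Lemma \ref{lem:TD1}, the sequence $\lbrace \theta_i \rbrace_{i=0}^d$ is $(\beta,\gamma,\varrho)$-recurrent, and each of $\lbrace \theta^*_i \rbrace_{i=0}^d$, $\lbrace \vartheta_i \rbrace_{i=0}^{d+1}$ is $\beta$-recurrent. By \cite[Section~8]{LS99}, a $(\beta,\gamma,\varrho)$-recurrent sequence is $(\beta,\gamma)$-recurrent and hence $\beta$-recurrent. Indeed, subtracting two consecutive instances of the three-term relation
\begin{align*}
\theta_{i-1}^2-\beta\theta_{i-1}\theta_i+\theta_i^2-\gamma(\theta_{i-1}+\theta_i)=\varrho
\end{align*}
and dividing by $\theta_{i-1}-\theta_{i+1}\neq 0$ yields $\theta_{i-1}-\beta\theta_i+\theta_{i+1}=\gamma$. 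Taking differences once more gives $\beta$-recurrence. The division is legitimate because the $\theta_i$ are mutually distinct. Thus (iv) holds. The implication (iii)$\Rightarrow$(iv) is the same argument with Lemma \ref{lem:TD2} in place of Lemma \ref{lem:TD1}.

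For (iv)$\Rightarrow$(i), assume all three sequences are $\beta$-recurrent. The key step is to produce $\gamma, \varrho$ making $\lbrace \theta_i \rbrace_{i=0}^d$ $(\beta,\gamma,\varrho)$-recurrent, and similarly $\gamma^*, \varrho^*$ for $\lbrace \theta^*_i \rbrace_{i=0}^d$. This uses the converse facts from \cite[Section~8]{LS99}.

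First, $\beta$-recurrence says that $\theta_{i-2}-\beta\theta_{i-1}+\beta\theta_i-\theta_{i+1}=0$ for $2\le i\le d-1$. Equivalently, the expression $\theta_{i-1}-\beta\theta_i+\theta_{i+1}$ is independent of $i$ for $1\le i\le d-1$. Call this common value $\gamma$; then the sequence is $(\beta,\gamma)$-recurrent.

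Second, given $(\beta,\gamma)$-recurrence, consider the expression $p_i=\theta_{i-1}^2-\beta\theta_{i-1}\theta_i+\theta_i^2-\gamma(\theta_{i-1}+\theta_i)$. The difference $p_i-p_{i+1}$ factors as $(\theta_{i-1}-\theta_{i+1})(\theta_{i-1}-\beta\theta_i+\theta_{i+1}-\gamma)=0$. Hence $p_i$ is a constant $\varrho$ for $1\le i\le d$, which gives $(\beta,\gamma,\varrho)$-recurrence.

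This is the only step with content, and it is elementary; the hypothesis $d\ge 3$ guarantees that the recurrences are meaningful as stated in Lemma \ref{lem:TD1}. With $\gamma,\varrho$ so chosen, Lemma \ref{lem:TD1} gives the first relation. With $\gamma^*,\varrho^*$ obtained the same way from $\lbrace\theta^*_i\rbrace_{i=0}^d$, Lemma \ref{lem:TD2} gives the second relation. In both cases condition (iii) of the lemma is supplied by the $\beta$-recurrence of $\lbrace\vartheta_i\rbrace_{i=0}^{d+1}$. This proves (i).

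I expect no serious obstacle. The one point needing care is citing the precise implications among $\beta$-, $(\beta,\gamma)$-, and $(\beta,\gamma,\varrho)$-recurrence from \cite[Section~8]{LS99}, including that the $(\beta,\gamma,\varrho)\Rightarrow\beta$ direction requires distinctness of the terms.
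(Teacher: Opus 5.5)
Your proposal is correct and is essentially the paper's proof. Both obtain (ii)$\Leftrightarrow$(iv) and (iii)$\Leftrightarrow$(iv) from Lemmas \ref{lem:TD1}, \ref{lem:TD2} together with the facts in \cite[Section~8]{LS99}, which you verify explicitly, and (i) is just the conjunction of (ii) and (iii). One slip: $\beta$-recurrence is $\theta_{i-2}-(\beta+1)\theta_{i-1}+(\beta+1)\theta_i-\theta_{i+1}=0$, not your version with $\beta$ in place of $\beta+1$; only the correct form is equivalent to constancy of $\theta_{i-1}-\beta\theta_i+\theta_{i+1}$, and with it your argument goes through unchanged.
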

\begin{proof} Conditions (ii), (iv) are equivalent by Lemma \ref{lem:TD1} and  \cite[Section~8]{LS99}.
Conditions (iii), (iv) are equivalent by Lemma \ref{lem:TD2} and  \cite[Section~8]{LS99}. By these comments, conditions (ii)--(iv) are equivalent.
In particular, conditions (ii), (iii) are equivalent. By this and the construction, conditions (i)--(iii) are equivalent. The result follows.
\end{proof}
\noindent For $d\geq 3$, Conjecture \ref{conj} asserts that there exists $\beta \in \mathbb F$ such that the equivalent conditions (i)--(iv) in Proposition \ref{prop:sum}
all hold.

\section{The case $d=3$}

\noindent  We continue to discuss the Hessenberg system
\begin{align*}
\Phi = (A; \lbrace E_i \rbrace_{i=0}^d; A^*; \lbrace E^*_i \rbrace_{i=0}^d)
\end{align*}
on $V$, with parameter array 
\begin{align*}
\bigl( \lbrace \theta_i \rbrace_{i=0}^d; \lbrace \theta^*_i \rbrace_{i=0}^d; \lbrace \phi_i \rbrace_{i=1}^d\bigr).
\end{align*}
\noindent  In this section, we prove Conjecture \ref{conj} under the assumption $d=3$.

\begin{lemma} Assume that $\Phi$ is circular and $d=3$. Then the following {\rm (i)--(iii)}  hold.
\begin{enumerate}
\item[\rm (i)]  $\vartheta_1 \not=\vartheta_3$ and
\begin{align*}
&0 = \theta_0 - \theta_1 + \theta_2 - \theta_3, \qquad \qquad
0 = \theta^*_0 - \theta^*_1 + \theta^*_2 - \theta^*_3, \\
&0 = \vartheta_1 - \vartheta_2 + \vartheta_3. 
\end{align*}
\item[\rm (ii)]  Each of $\lbrace \theta_i \rbrace_{i=0}^d$, $\lbrace \theta^*_i \rbrace_{i=0}^d$, $\lbrace \vartheta_i \rbrace_{i=0}^{d+1}$ is
$0$-recurrent.
\item[\rm (iii)] The tridiagonal relations \eqref{eq:CTD1}, \eqref{eq:CTD2} hold with $\beta=0$ and
\begin{align*}
 \gamma &= \theta_0 + \theta_2 =\theta_1 + \theta_3,    \\
  \gamma^* &= \theta^*_0 + \theta^*_2 =\theta^*_1 + \theta^*_3,   \\
 \varrho &= (\theta_1 - \theta_0)(\theta_1-\theta_2) - 2 \theta_0 \theta_2 = (\theta_2 - \theta_1)(\theta_2 - \theta_3) - 2 \theta_1 \theta_3, \\
  \varrho^* &= (\theta^*_1 - \theta^*_0)(\theta^*_1-\theta^*_2) - 2 \theta^*_0 \theta^*_2 = (\theta^*_2 - \theta^*_1)(\theta^*_2 - \theta^*_3) - 2 \theta^*_1 \theta^*_3.
\end{align*}
\end{enumerate}
\end{lemma}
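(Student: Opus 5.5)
The plan is to translate the circularity conditions of Definition \ref{def:circ} into polynomial equations in the parameter array, and then read off (ii) and (iii) from (i) using Proposition \ref{prop:sum} and Lemmas \ref{lem:TD1}, \ref{lem:TD2}.

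\textbf{Coordinates.} Work in the basis from the parameter array, in which $A$ is lower bidiagonal with diagonal $(\theta_3,\theta_2,\theta_1,\theta_0)$ and subdiagonal entries $1$, and $A^*$ is upper bidiagonal with diagonal $(\theta^*_0,\dots,\theta^*_3)$ and superdiagonal $(\phi_1,\phi_2,\phi_3)$. Since $A$ is multiplicity-free, each $E_i$ has rank one, so $E_i = v_i w_i^{\rm t}/(w_i^{\rm t}v_i)$ for a right eigenvector $v_i$ and a left eigenvector $w_i$ of $A$. Hence $E_iA^*E_j=0$ if and only if the scalar $w_i^{\rm t}A^*v_j$ vanishes.

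\textbf{Eigenvectors.} For a lower bidiagonal matrix these eigenvectors are explicit. The coordinates of $v_j$ and $w_i$ are products of the factors $(\theta_j-\theta_k)^{-1}$, and each such vector has a zero block on one side. The same applies on the dual side, with $v^*_j$, $w^*_i$ eigenvectors of the upper bidiagonal $A^*$.

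\textbf{Circularity equations.} For $d=3$, circularity requires $E_0A^*E_2=0$, $E_1A^*E_3=0$ and $E_0A^*E_3\neq0$, together with the dual conditions $E^*_0AE^*_2=0$, $E^*_1AE^*_3=0$ and $E^*_0AE^*_3\neq 0$. After clearing the nonzero denominators $\theta_i-\theta_k$ and $\theta^*_i-\theta^*_k$, the four vanishing conditions become polynomial equations in the $\theta_i,\theta^*_i,\phi_i$. Rewriting $\phi_i$ through $\vartheta_i$ using \eqref{eq:vth} should simplify these considerably. By Lemma \ref{lem:dualC} and the symmetry $\vartheta^*_i=\vartheta_{d-i+1}$, the dual pair of equations is obtained from the first pair by the substitution $\theta\leftrightarrow\theta^*$, $\vartheta_i\mapsto\vartheta_{4-i}$, so only two equations need to be computed by hand.

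I expect each equation to factor as a product of nonzero differences times a linear expression. Taking suitable combinations of the four equations should then give the three linear relations in (i). I also expect the nonvanishing of $E_0A^*E_3$ (or its dual) to reduce to a nonzero multiple of $\vartheta_1-\vartheta_3$, which gives $\vartheta_1\neq\vartheta_3$. This elimination is the main obstacle. The equations are not obviously linear, and the linear relations may only emerge after combining them, using the distinctness of the eigenvalues to cancel common factors. If the direct elimination is messy, I would first solve the two equations from $E_0A^*E_2=0$ and $E_1A^*E_3=0$ for $\phi_1$ and $\phi_3$ in terms of $\phi_2$, and then substitute into the dual equations.

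\textbf{Parts (ii) and (iii).} With $\beta=0$, $\beta$-recurrence means $x_{i-2}-x_{i-1}+x_i-x_{i+1}=0$.
\begin{itemize}
\item For $\lbrace\theta_i\rbrace_{i=0}^3$ and $\lbrace\theta^*_i\rbrace_{i=0}^3$ there is a single instance, and it is exactly the corresponding equation in (i).
\item For $\lbrace\vartheta_i\rbrace_{i=0}^4$ there are two instances. Since $\vartheta_0=\vartheta_4=0$, both reduce to $\vartheta_1-\vartheta_2+\vartheta_3=0$.
\end{itemize}
This gives (ii). Proposition \ref{prop:sum}(iv)$\Rightarrow$(i) then yields the tridiagonal relations with $\beta=0$.

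To identify the remaining constants, apply Lemma \ref{lem:TD1}(i) and Lemma \ref{lem:TD2}(i) together with the standard facts from \cite[Section~8]{LS99}:
\begin{itemize}
\item $\gamma$ is given by $\gamma=\theta_{i-1}+\theta_{i+1}$, which yields $\gamma=\theta_0+\theta_2=\theta_1+\theta_3$;
\item $\varrho$ is given by $\varrho=\theta_{i-1}^2+\theta_i^2-\gamma(\theta_{i-1}+\theta_i)$, evaluated at $i=1,2$, which rearranges to the displayed forms;
\item $\gamma^*$ and $\varrho^*$ are obtained in the same way from $\lbrace\theta^*_i\rbrace_{i=0}^3$.
\end{itemize}
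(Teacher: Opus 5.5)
You handle (ii) and (iii) correctly. Reading off $\gamma,\varrho,\gamma^*,\varrho^*$ from Lemmas \ref{lem:TD1}, \ref{lem:TD2} and the recurrence facts is a valid substitute for the paper's direct matrix check. Your setup for (i) is also the paper's: the entries of $E_0A^*E_2$, $E_1A^*E_3$, $E_0A^*E_3$ rewritten via $\vartheta_i$, with the dual conditions obtained by substitution.

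The gap is the elimination you postpone, and your forecast of it is wrong. The three relations in (i) do \emph{not} follow from the four vanishing conditions and the two non-vanishing conditions in the form your plan produces them. Take $(\theta_0,\theta_1,\theta_2,\theta_3)=(0,1,2,4)$, $(\theta^*_0,\theta^*_1,\theta^*_2,\theta^*_3)=(0,1,3,5)$ and $\phi_1=\phi_2=\phi_3=0$, so that $(\vartheta_1,\vartheta_2,\vartheta_3)=(-4,-6,-5)$. Then:
\begin{itemize}
\item $E_0A^*E_2=0$, $E_1A^*E_3=0$ and $E_0A^*E_3\neq0$;
\item the two dual equations obtained by your substitution $\theta\leftrightarrow\theta^*$, $\vartheta_i\mapsto\vartheta_{4-i}$ hold, and the dual non-vanishing condition obtained the same way also holds;
\item yet $\theta_0-\theta_1+\theta_2-\theta_3$, $\theta^*_0-\theta^*_1+\theta^*_2-\theta^*_3$ and $\vartheta_1-\vartheta_2+\vartheta_3$ all equal $-3$.
\end{itemize}
The reason is that obtaining the dual conditions by substitution presupposes that $\Phi^*$ is a Hessenberg system. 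Computed directly in your coordinates, $E^*_0AE^*_2=0$ carries an extra factor $\phi_1\phi_2$, which the substitution silently cancels. So no combination of these equations, and no "solve for $\phi_1,\phi_3$ and substitute" manoeuvre, can yield (i). The hypothesis $\phi_i\neq0$ must be used again, in an essential way.

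The missing step, as the paper does it, runs as follows. Set $\eta=(\theta_0-\theta_3)/(\theta_1-\theta_2)$, $\eta^*=(\theta^*_0-\theta^*_3)/(\theta^*_1-\theta^*_2)$ and $\vartheta=\vartheta_1-\vartheta_2+\vartheta_3$.
\begin{itemize}
\item The condition $E_0A^*E_2=0$ becomes $\eta\vartheta=(\eta-1)\vartheta_1$.
\item The condition $E_1A^*E_3=0$ becomes $(\theta_0-\theta_1)(\theta^*_1-\theta^*_2)(\eta-\eta^*)=\eta\vartheta-(\eta-1)\vartheta_3$.
\item The duals are analogous.
\end{itemize}
If $\eta=1$ or $\eta^*=1$, these equations force $\eta=\eta^*=1$ and $\vartheta=0$, which is exactly (i).

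Otherwise $\vartheta_1=\eta\vartheta/(\eta-1)$ and $\vartheta_3=\eta^*\vartheta/(\eta^*-1)$. Then $\eta\neq\eta^*$ because $\vartheta_1\neq\vartheta_3$. After dividing by $\eta-\eta^*$, the remaining two equations say precisely $\phi_1/\eta=0$ and $\phi_3/\eta^*=0$, contradicting the Hessenberg hypothesis. This case split is what your plan lacks: $\phi_1\neq0$ (or $\phi_3\neq0$) is what eliminates the branch with $\eta\neq1$ and $\eta^*\neq1$.

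A smaller point: $\vartheta_1\neq\vartheta_3$ does not come from $E_0A^*E_3\neq0$ alone. That condition reads $\vartheta_2(\theta_0-\theta_3)-\vartheta_1(\theta_1-\theta_3)-\vartheta_3(\theta_0-\theta_2)\neq0$. This expression equals $(\theta_3-\theta_2)(\vartheta_1-\vartheta_3)$ only modulo the equation coming from $E_0A^*E_2=0$.
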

\begin{proof} Since $\Phi$ is a Hessenberg system, we may represent $A$ and $A^*$ as matrices
\begin{align}
A = \begin{pmatrix}  \theta_3 &0 & 0 &0 \\ 
                                    1 &\theta_2 &0& 0\\
                                      0 &1&\theta_1 &0 \\
                                       0&0&1&\theta_0
                                       \end{pmatrix}, \qquad \qquad 
 A^* = \begin{pmatrix} \theta^*_0&\phi_1 &0&0 \\ 
                                   0 &\theta^*_1&\phi_2&0 \\
                                    0&0&\theta^*_2& \phi_3\\
                                    0&0&0&\theta^*_3
                                    \end{pmatrix}. \label{eq:AAs}
\end{align}
By \eqref{eq;Eformula}, the primitive idempotents of $A$ are
\begin{align*}
E_0 &= \begin{pmatrix}
0&0&0&0 \\
0&0&0&0 \\
0&0&0&0 \\
\frac{1}{(\theta_0-\theta_1)(\theta_0-\theta_2)(\theta_0-\theta_3)}&\frac{1}{(\theta_0-\theta_1)(\theta_0-\theta_2)}&\frac{1}{\theta_0-\theta_1}&1
\end{pmatrix},
\\
E_1 &= \begin{pmatrix}
         0&0&0&0 \\
        0 &0&0&0 \\
      \frac{1}{(\theta_1-\theta_2)(\theta_1-\theta_3)}   &\frac{1}{\theta_1-\theta_2}&1&0 \\
 \frac{1}{(\theta_1-\theta_0)(\theta_1-\theta_2)(\theta_1-\theta_3)}        &\frac{1}{(\theta_1-\theta_0)(\theta_1-\theta_2)}&\frac{1}{\theta_1-\theta_0}&0
\end{pmatrix}, 
\\
E_2 &= \begin{pmatrix}
0&0&0&0 \\
\frac{1}{\theta_2-\theta_3}&1&0&0 \\
\frac{1}{(\theta_2-\theta_1)(\theta_2-\theta_3)}&\frac{1}{\theta_2 - \theta_1}&0&0 \\
\frac{1}{(\theta_2-\theta_0)(\theta_2-\theta_1)(\theta_2-\theta_3)}&\frac{1}{(\theta_2-\theta_0)(\theta_2-\theta_1)}&0&0
\end{pmatrix},
\\
E_3 &= \begin{pmatrix}
 1&0&0&0\\
\frac{1}{\theta_3-\theta_2} &0&0&0 \\
\frac{1}{(\theta_3-\theta_1)(\theta_3-\theta_2)} &0&0&0 \\
\frac{1}{(\theta_3-\theta_0)(\theta_3-\theta_1)(\theta_3-\theta_2)} &0&0&0
 \end{pmatrix}.
\end{align*}

\noindent (i) From the $(2,0)$-entry of $E_1 A^* E_3=0$ we obtain
\begin{align}
\begin{split}
0 &=(\theta_0 - \theta_1) \Bigl(-\theta^*_0 + \frac{\theta_0 - \theta_3}{ \theta_1 - \theta_2} \theta^*_1 -   \frac{\theta_0 - \theta_3}{ \theta_1 - \theta_2} \theta^*_2+ \theta^*_3   \Bigr)\\
& \qquad \qquad - \frac{\theta_0 - \theta_3}{ \theta_1 - \theta_2} \vartheta_1 + \frac{\theta_0 - \theta_3}{ \theta_1 - \theta_2} \vartheta_2 -\vartheta_3.
\end{split} \label{eq:n1}
\end{align}
From the $(3,1)$-entry of $E_0 A^* E_2=0$ we obtain
\begin{align}
0 = \vartheta_1 -  \frac{\theta_0 - \theta_3}{ \theta_1 - \theta_2} \vartheta_2 +  \frac{\theta_0 - \theta_3}{ \theta_1 - \theta_2} \vartheta_3. \label{eq:n2}
\end{align}
From the $(3,0)$-entry of $E_0 A^* E_3 \not=0$ we obtain
\begin{align}
0 \not=\vartheta_2 (\theta_0-\theta_3) -\vartheta_1(\theta_1-\theta_3)-\vartheta_3(\theta_0-\theta_2).   \label{eq:nn3}
\end{align}
Applying these results to $\Phi^*$ and using 
\begin{align*}
\vartheta^*_1 = \vartheta_3, \qquad \quad \vartheta^*_2 = \vartheta_2, \qquad \quad \vartheta^*_3 = \vartheta_1
\end{align*}
we obtain
\begin{align}
\begin{split}
0 &=(\theta^*_0 - \theta^*_1) \Bigl(-\theta_0 + \frac{\theta^*_0 - \theta^*_3}{ \theta^*_1 - \theta^*_2} \theta_1 -   \frac{\theta^*_0 - \theta^*_3}{ \theta^*_1 - \theta^*_2} \theta_2+ \theta_3   \Bigr) \\
&\qquad \qquad - \frac{\theta^*_0 - \theta^*_3}{ \theta^*_1 - \theta^*_2} \vartheta_3 + \frac{\theta^*_0 - \theta^*_3}{ \theta^*_1 - \theta^*_2} \vartheta_2 -\vartheta_1
\end{split}
\label{eq:n1s}
\end{align}
and 
\begin{align}
0 = \vartheta_3 -  \frac{\theta^*_0 - \theta^*_3}{ \theta^*_1 - \theta^*_2} \vartheta_2 +  \frac{\theta^*_0 - \theta^*_3}{ \theta^*_1 - \theta^*_2} \vartheta_1 \label{eq:n2s}
\end{align}
and
\begin{align}
0 \not=\vartheta_2 (\theta^*_0-\theta^*_3) -\vartheta_3(\theta^*_1-\theta^*_3)-\vartheta_1(\theta^*_0-\theta^*_2).       \label{eq:nn3s}
\end{align}
We now solve the equations and inequalities \eqref{eq:n1}--\eqref{eq:nn3s}. We first show $\vartheta_1 \not=\vartheta_3$.
Using \eqref{eq:n2}, \eqref{eq:nn3} we obtain
\begin{align*}
\vartheta_1 - \vartheta_3 &= \frac{\vartheta_1 (\theta_1 - \theta_2) + \vartheta_2 (\theta_3- \theta_0) + \vartheta_3 (\theta_0 - \theta_3)}{\theta_3 - \theta_2} \\
 & \qquad + \frac{\vartheta_1 (\theta_3 - \theta_1) + \vartheta_2 (\theta_0- \theta_3) + \vartheta_3 (\theta_2 - \theta_0)}{\theta_3 - \theta_2} 
\\
 & = \frac{\vartheta_1 (\theta_3 - \theta_1) + \vartheta_2 (\theta_0- \theta_3) + \vartheta_3 (\theta_2 - \theta_0)}{\theta_3 - \theta_2} \\
&\not= 0.
\end{align*}
For notational convenience, define
\begin{align}
\eta = \frac{\theta_0 - \theta_3}{\theta_1- \theta_2}, \qquad \qquad \eta^* =  \frac{\theta^*_0 - \theta^*_3}{\theta^*_1 - \theta^*_2},
 \qquad \qquad \vartheta = \vartheta_1 - \vartheta_2 + \vartheta_3 \label{eq:notate}
\end{align}
and note that $\eta, \eta^*$ are nonzero. We will show that
\begin{align*}
\eta = 1, \qquad \qquad \eta^*=1, \qquad \qquad \vartheta=0.
\end{align*}
Write \eqref{eq:n1}, \eqref{eq:n2} and \eqref{eq:n1s}, \eqref{eq:n2s} in terms of $\eta, \eta^*, \vartheta$.
 Equation \eqref{eq:n1} becomes
\begin{align}
0 = (\theta_0 - \theta_1)(\theta^*_1 - \theta^*_2) (\eta-\eta^*) - \eta \vartheta + (\eta-1) \vartheta_3.
\label{eq:nnt1}
\end{align}
 Equation \eqref{eq:n2} becomes
\begin{align}
0 = \eta \vartheta + (1-\eta) \vartheta_1.
\label{eq:nnt2}
\end{align}
 Equation \eqref{eq:n1s} becomes
\begin{align}
0 = (\theta^*_0 - \theta^*_1)(\theta_1 - \theta_2) (\eta^*-\eta) - \eta^* \vartheta + (\eta^*-1) \vartheta_1.
\label{eq:nnt1s}
\end{align}
 Equation \eqref{eq:n2s} becomes
\begin{align}
0 = \eta^* \vartheta + (1-\eta^*) \vartheta_3.
\label{eq:nnt2s}
\end{align}
If $\eta=1$, then $\vartheta=0$ by \eqref{eq:nnt2}  and $\eta^*=1$ by \eqref{eq:nnt1}.
If $\eta^*=1$, then  $\vartheta=0$ by \eqref{eq:nnt2s}  and $\eta=1$ by \eqref{eq:nnt1s}.
For the rest of this proof, assume that $\eta\not=1$ and $\eta^*\not=1$. We will get a contradiction. By \eqref{eq:nnt2}, \eqref{eq:nnt2s} we obtain
\begin{align}
\vartheta_1 = \frac{\eta}{\eta-1} \vartheta, \qquad \qquad \vartheta_3 = \frac{\eta^*}{\eta^*-1} \vartheta.
\label{eq:nnt3}
\end{align}
Evaluating \eqref{eq:nnt1}, \eqref{eq:nnt1s} using \eqref{eq:nnt3}, we get
\begin{align}
0 &= (\theta_0 - \theta_1)(\theta^*_1 - \theta^*_2) (\eta-\eta^*)  + \frac{\eta-\eta^*}{\eta^*-1} \vartheta,
\label{eq:nnnt1}
\\
0 &= (\theta^*_0 - \theta^*_1)(\theta_1 - \theta_2) (\eta^*-\eta)  +\frac{\eta^*-\eta}{\eta-1} \vartheta.
\label{eq:nnnt1s}
\end{align}
We have $\eta\not=\eta^*$; otherwise \eqref{eq:nnt3} gives $\vartheta_1 =\vartheta_3$  which is forbidden by our comments below  \eqref{eq:nn3s}.
Now \eqref{eq:nnnt1}, \eqref{eq:nnnt1s} become
\begin{align}
0 &= (\theta_0 - \theta_1)(\theta^*_1 - \theta^*_2)   + \frac{\vartheta}{\eta^*-1},
\label{eq:nnnnt1}
\\
0 &= (\theta^*_0 - \theta^*_1)(\theta_1 - \theta_2) +\frac{\vartheta}{\eta-1}.
\label{eq:nnnnt1s}
\end{align}
 Combining  \eqref{eq:vth}, \eqref{eq:notate}, \eqref{eq:nnt3}, \eqref{eq:nnnnt1}, \eqref{eq:nnnnt1s}
we get  the contradictions
\begin{align*}
0 \not= \frac{\phi_1}{\eta} &= \frac{ \vartheta_1 + (\theta^*_1-\theta^*_0)(\theta_3 - \theta_0)}{\eta} 
= \frac{\vartheta}{\eta-1} + (\theta^*_0 - \theta^*_1)(\theta_1 - \theta_2) = 0
\end{align*}
and 
\begin{align*}
0 \not= \frac{\phi_3}{\eta^*} &= \frac{ \vartheta_3 + (\theta^*_3-\theta^*_0)(\theta_1 - \theta_0)}{\eta^*} 
= \frac{\vartheta}{\eta^*-1} + (\theta_0 - \theta_1)(\theta^*_1 - \theta^*_2) = 0.
\end{align*}
\\
\noindent (ii) By (i) above. \\
\noindent (iii) By matrix multiplication using \eqref{eq:AAs} and (i) above.
\end{proof}

\section{The case $d\geq 4$}
\noindent  We continue to discuss the Hessenberg system
\begin{align*}
\Phi = (A; \lbrace E_i \rbrace_{i=0}^d; A^*; \lbrace E^*_i \rbrace_{i=0}^d)
\end{align*}
on $V$, with parameter array 
\begin{align*}
\bigl( \lbrace \theta_i \rbrace_{i=0}^d; \lbrace \theta^*_i \rbrace_{i=0}^d; \lbrace \phi_i \rbrace_{i=1}^d\bigr).
\end{align*}
\noindent  In this section, we prove Conjecture \ref{conj} under the assumption $d\geq 4$.
\medskip

\noindent We will use  the following strategy.
Throughout this section, we assume that (i) $d\geq 4$; (ii) $\Phi$ is circular; (iii) for all $\beta \in \mathbb F$ the equivalent conditions (i)--(iv) in Proposition \ref{prop:sum}
do not hold for $\Phi$. We will get a contradiction.
\medskip

\noindent As we go along, keep in mind that for every result we obtain about $\Phi$, there is a  similar result about $\Phi^*$.
\medskip

\noindent Let $\langle A \rangle$ denote the subalgebra of ${\rm End}(V)$ generated by $A$. Note that  $\langle A \rangle$  has a basis $\lbrace A^i \rbrace_{i=0}^d$
and a basis $\lbrace E_i \rbrace_{i=0}^d$.

\begin{lemma} \label{lem:lem1}
For $0 \leq i \leq d-1$,
\begin{align*}
(E_0 +& E_1 + \cdots + E_i ) A^* - A^* (E_0 + E_1+ \cdots + E_i) \\
&= E_i A^* E_{i+1}-E_{i+1} A^* E_i + E_0 A^* E_d.
\end{align*}
\end{lemma}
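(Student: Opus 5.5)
Write $F = E_0 + E_1 + \cdots + E_i$ and $G = I - F = E_{i+1} + \cdots + E_d$. Since $F$ is idempotent, we have
\begin{align*}
FA^* - A^*F = FA^*(F+G) - (F+G)A^*F = FA^*G - GA^*F.
\end{align*}
So the plan is to compute the two "off-diagonal blocks" $FA^*G$ and $GA^*F$ separately, using the circular Hessenberg conditions on $E_r A^* E_s$ from Definition \ref{def:HS}(iv) and Definition \ref{def:circ}(i).

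First I treat $GA^*F = \sum_{r=i+1}^{d}\sum_{s=0}^{i} E_r A^* E_s$. Here $r > s$. By Definition \ref{def:HS}(iv), $E_r A^* E_s = 0$ whenever $r - s > 1$. The only pair with $r - s = 1$, $r \geq i+1$ and $s \leq i$ is $(r,s) = (i+1,i)$. Hence $GA^*F = E_{i+1} A^* E_i$.

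Next I treat $FA^*G = \sum_{r=0}^{i}\sum_{s=i+1}^{d} E_r A^* E_s$, where now $s > r$. By Definition \ref{def:circ}(i), $E_r A^* E_s = 0$ whenever $1 < s - r < d$. So the only surviving terms have $s - r = 1$ or $s - r = d$.
\begin{itemize}
\item $s - r = d$ forces $(r,s) = (0,d)$, which lies in the range because $0 \leq i$ and $d \geq i+1$.
\item $s - r = 1$ with $r \leq i < s$ forces $(r,s) = (i,i+1)$.
\end{itemize}
These two pairs are distinct, because $(i, i+1) = (0, d)$ would force $d = 1$, contradicting $d \geq 2$. Therefore
\begin{align*}
FA^*G = E_i A^* E_{i+1} + E_0 A^* E_d.
\end{align*}

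Subtracting the two blocks gives exactly the claimed identity.

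The only step needing care is this index bookkeeping: making sure that exactly these terms survive, and that the corner term $E_0 A^* E_d$ is not double counted. Beyond that, the argument is a routine application of the definitions.
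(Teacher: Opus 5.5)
Your proof is correct and is essentially the paper's argument: both rest only on the vanishing conditions in Definition~\ref{def:HS}(iv) and Definition~\ref{def:circ}(i), which decide which products $E_rA^*E_s$ survive. The difference is bookkeeping. The paper expands each $E_jA^*$ and $A^*E_j$ $(0\le j\le i)$ into its nonzero pieces and lets the terms cancel in the sum. Your identity $FA^*-A^*F=FA^*G-GA^*F$ makes that cancellation automatic, and it isolates exactly where $i\le d-1$ and $d\ge 2$ are used. (That identity needs only $F+G=I$, not idempotence.)
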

\begin{proof} To verify this equation, evaluate the left-hand side using
\begin{align*}
 E_0 A^* &= E_0 A^* E_0 + E_0 A^* E_1 + E_0 A^* E_d, \\
E_j A^* &= E_j A^* E_{j-1} + E_j A^* E_j + E_j A^* E_{j+1} \qquad (1 \leq j \leq i), \\
 A^* E_0 &= E_0 A^* E_0 + E_1 A^* E_0, \\
 A^* E_j &= E_{j-1} A^* E_j + E_j A^* E_j + E_{j+1} A^* E_j \qquad (1 \leq j \leq i). 
 \end{align*}
\end{proof}

\begin{lemma}\label{lem:lem2} For $r,s \in \mathbb N$ we have
\begin{align*}
A^r A^* A^s -A^s A^* A^r &= \sum_{i=0}^{d-1} (\theta_i^r \theta_{i+1}^s - \theta_i^s \theta_{i+1}^r) \Bigl(  (E_0  + \cdots + E_i ) A^* - A^* (E_0 + \cdots + E_i)  \Bigr)\\
&\qquad + \Psi(r,s) E_0 A^* E_d,
\end{align*}
where
\begin{align*}
\Psi(r,s) &= \theta_0^r \theta_d^s - \theta_0^s \theta_d^r- \sum_{i=0}^{d-1} (\theta_i^r \theta_{i+1}^s - \theta_i^s \theta_{i+1}^r).
\end{align*}
\end{lemma}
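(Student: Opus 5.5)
We expand both sides in the block decomposition $E_i A^* E_j$, using the circular Hessenberg shape of $\Phi$ (Definition \ref{def:circ}). For $0 \leq i,j \leq d$, $E_iA^*E_j$ is nonzero only if $j-i\in\{-1,0,1\}$ or $(i,j)=(0,d)$. Recall $d\geq 4$ is assumed in this section, so the pair $(0,d)$ is distinct from the tridiagonal positions and $(d,0)$ does not occur. Using $A^r=\sum_k \theta_k^r E_k$ we get
\begin{align*}
A^rA^*A^s - A^sA^*A^r = \sum_{i,j} (\theta_i^r\theta_j^s - \theta_i^s\theta_j^r) E_iA^*E_j .
\end{align*}
Diagonal terms ($i=j$) vanish. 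The surviving terms are the pairs $(i,i+1)$ and $(i+1,i)$ for $0\leq i\leq d-1$, together with $(0,d)$. Hence the left-hand side equals
\begin{align*}
\sum_{i=0}^{d-1} (\theta_i^r\theta_{i+1}^s-\theta_i^s\theta_{i+1}^r)\bigl(E_iA^*E_{i+1}-E_{i+1}A^*E_i\bigr) + (\theta_0^r\theta_d^s-\theta_0^s\theta_d^r)E_0A^*E_d .
\end{align*}
In the pair $(i+1,i)$ the coefficient is $\theta_{i+1}^r\theta_i^s-\theta_{i+1}^s\theta_i^r$, which is the negative of the coefficient for $(i,i+1)$. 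That sign is what produces the difference $E_iA^*E_{i+1}-E_{i+1}A^*E_i$.

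Next, apply Lemma \ref{lem:lem1}. For $0\le i\le d-1$ we have
\begin{align*}
E_iA^*E_{i+1}-E_{i+1}A^*E_i = (E_0+\cdots+E_i)A^*-A^*(E_0+\cdots+E_i) - E_0A^*E_d .
\end{align*}
Substitute this into the sum above. Each term contributes $-(\theta_i^r\theta_{i+1}^s-\theta_i^s\theta_{i+1}^r)E_0A^*E_d$. Collecting these with the existing $(0,d)$ term gives the coefficient $\theta_0^r\theta_d^s-\theta_0^s\theta_d^r-\sum_{i=0}^{d-1}(\theta_i^r\theta_{i+1}^s-\theta_i^s\theta_{i+1}^r)$ on $E_0A^*E_d$, which is exactly $\Psi(r,s)$.

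No step is a genuine obstacle; the argument is bookkeeping. The one point needing care is the support of $E_iA^*E_j$. Hessenberg condition (iv) kills $i-j>1$, and circularity (i) kills $1<j-i<d$. Together these leave only the tridiagonal band and the corner $(0,d)$, and the hypothesis $d\geq 4$ keeps the corner separate from the band.
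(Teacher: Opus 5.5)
Your proposal is correct and follows the paper's proof essentially step for step. You expand $A^rA^*A^s-A^sA^*A^r=\sum_{i,j}(\theta_i^r\theta_j^s-\theta_i^s\theta_j^r)E_iA^*E_j$, keep only the band terms and the corner $(0,d)$ using the circular Hessenberg shape, pair the $(i,i+1)$ and $(i+1,i)$ terms, and then eliminate $E_iA^*E_{i+1}-E_{i+1}A^*E_i$ via Lemma \ref{lem:lem1}; the only nitpick is that $d\geq 2$ already keeps the corner off the band, so invoking $d\geq 4$ is harmless but unnecessary.
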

\begin{proof}
We have
\begin{align*}
A^r &A^* A^s -A^s A^* A^r \\
&= \sum_{i=0}^d \sum_{j=0}^d E_i (A^r A^* A^s -A^s A^* A^r ) E_j \\
               &= \sum_{i=0}^d \sum_{j=0}^d (\theta_i^r \theta_j^s-\theta_i^s \theta_j^r)E_i A^* E_j \\
               &=    (\theta_0^r \theta_d^s-\theta_0^s \theta_d^r)E_0A^* E_d +
                     \sum_{i=0}^{d-1} (\theta_i^r \theta_{i+1}^s-\theta_i^s \theta_{i+1}^r)E_i A^*E_{i+1}
                 + \sum_{i=0}^{d-1} (\theta_{i+1}^r \theta_{i}^s-\theta_{i+1}^s \theta_i^r)E_{i+1} A^*E_i \\
               &=    (\theta_0^r \theta_d^s-\theta_0^s \theta_d^r)E_0A^* E_d +
                     \sum_{i=0}^{d-1} (\theta_i^r \theta_{i+1}^s-\theta_i^s \theta_{i+1}^r)(E_i A^*E_{i+1} - E_{i+1} A^* E_i).
\end{align*}
In this sum, for $0 \leq i \leq d-1$ we eliminate $E_i A^*E_{i+1} - E_{i+1} A^* E_i$ using Lemma \ref{lem:lem1}.
The result follows.
\end{proof}

\begin{lemma} \label{lem:lem3} We have $\Psi(2,1) \not=0$.
\end{lemma}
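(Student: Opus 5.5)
We argue by contradiction: assume $\Psi(2,1)=0$. The plan is to turn this into a trigonometric-type identity on the cyclic sequence $\theta_0,\theta_1,\ldots,\theta_d$, and then to combine it with the vanishing conditions $E_iA^*E_j=0$ for $1<j-i<d$ to force one of the $\beta$-recurrences in Proposition \ref{prop:sum}(iv). That would contradict standing assumption (iii) of this section. I have not found the precise contradiction; the closing step below is the part I am least sure of.

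\emph{Rewriting $\Psi(2,1)$ cyclically.} Write indices modulo $d+1$, so that $\theta_{d+1}=\theta_0$. Then
\begin{align*}
\Psi(2,1) = -\sum_{i=0}^{d} \theta_i\theta_{i+1}(\theta_i-\theta_{i+1}),
\end{align*}
which is a cyclic antisymmetric sum. Set $f(x,y,z)=xy(x-y)+yz(y-z)+zx(z-x)=-(x-y)(y-z)(z-x)$. Fanning from $\theta_0$ gives
\begin{align*}
\Psi(2,1)=-\sum_{i=1}^{d-1} f(\theta_0,\theta_i,\theta_{i+1}).
\end{align*}
For $d=2$ this is a nonzero Vandermonde-type product. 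For $d\geq 4$ it can vanish, so the circularity hypothesis must enter.

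\emph{Operator consequences of $\Psi(2,1)=0$.} Note that $\Psi(r,0)=0$ for every $r$, by telescoping. Hence Lemma \ref{lem:lem2} with $(r,s)=(1,0),(2,0),(3,0),(2,1)$ puts $[A,A^*]$, $[A^2,A^*]$, $[A^3,A^*]$ and $A^2A^*A-AA^*A^2$ all inside
\begin{align*}
\mathcal X={\rm span}\{X_0,\ldots,X_{d-1}\}, \qquad X_i=(E_0+\cdots+E_i)A^*-A^*(E_0+\cdots+E_i).
\end{align*}
By Lemma \ref{lem:lem1}, $X_i=E_iA^*E_{i+1}-E_{i+1}A^*E_i+E_0A^*E_d$. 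Since $E_{i+1}A^*E_i\neq 0$, the $X_i$ are linearly independent. Consequently the coefficient vectors
\begin{align*}
(\theta_i-\theta_{i+1}), \qquad (\theta_i^2-\theta_{i+1}^2), \qquad (\theta_i^3-\theta_{i+1}^3), \qquad (\theta_i\theta_{i+1}(\theta_i-\theta_{i+1}))
\end{align*}
are determined by $A,A^*$, with no $E_0A^*E_d$ correction.

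\emph{Bringing in circularity.} Next I would use the bidiagonal model of the parameter array and compute entries of $E_iA^*E_j$ for $1<j-i<d$, as in the $d=3$ case. This gives linear relations among the $\vartheta_i$ with coefficients that are rational in the $\theta$'s, together with the dual relations from $\Phi^*$ (Lemma \ref{lem:dualC}).

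\emph{Main obstacle.} The goal is to show that these entry relations, together with the fan identity $\sum_{i=1}^{d-1} f(\theta_0,\theta_i,\theta_{i+1})=0$, force some $\beta$ for which $\{\theta_i\}$, $\{\theta^*_i\}$ and $\{\vartheta_i\}$ are all $\beta$-recurrent. That is exactly what assumption (iii) forbids. I expect the elimination needed here to be the hardest step. I would try it first with the smallest admissible windows, namely $E_iA^*E_{i+2}=0$ and $E_iA^*E_{i+3}=0$, since these involve only consecutive groups of $\theta$'s and $\vartheta$'s. The hope is that they yield local three-term relations, which the cyclic identity then glues into a global recurrence.
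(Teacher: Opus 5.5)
\textbf{There is a genuine gap: the proposal does not prove the lemma.} Your first half is sound and matches the paper. The cyclic rewriting of $\Psi(2,1)$ is correct. Lemma \ref{lem:lem2} with $(r,s)=(2,1)$ does place $A^2A^*A-AA^*A^2$ in $\mathcal X$ once $\Psi(2,1)=0$. But the argument stops there. Your closing step is left as a hope, and the route you sketch for it is never carried out and is not what is needed. That route is to derive entrywise relations among the $\vartheta_i$ from $E_iA^*E_j=0$, then glue them with the fan identity into a global $\beta$-recurrence. The fan identity and the linear independence of the $X_i$ are correct, but they play no role in the proof.

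\textbf{The missing idea.} The space $\mathcal X$ consists of commutators with $A^*$, since $X_i=[E_0+\cdots+E_i,A^*]$ with $E_0+\cdots+E_i\in\langle A\rangle$. So $\Psi(2,1)=0$ gives
\begin{align*}
A^2A^*A-AA^*A^2=f(A)A^*-A^*f(A)
\end{align*}
for some $f\in\mathbb F[\lambda]$ of degree $s\le d$. Then work on the $E^*$ side, not the $E$ side: $A$ acts in Hessenberg fashion there, so $E^*_jA^kE^*_0=0$ for $k<j$, while $E^*_jA^jE^*_0=E^*_jAE^*_{j-1}A\cdots AE^*_0\neq 0$.
\begin{itemize}
\item If $s>3$, sandwich between $E^*_s$ and $E^*_0$. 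The left side vanishes, and the right side is $c(\theta^*_0-\theta^*_s)E^*_sA^sE^*_0\neq 0$, where $c$ is the leading coefficient of $f$.
\item If $s<3$, sandwich between $E^*_3$ and $E^*_0$. The left side is $(\theta^*_1-\theta^*_2)E^*_3A^3E^*_0\neq 0$, and the right side is $0$.
\end{itemize}
Hence $s=3$. Divide by $c$ and set $\beta=c^{-1}-1$. The identity becomes exactly the first tridiagonal relation, i.e. Proposition \ref{prop:sum}(ii), which contradicts the standing assumption.

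Your target of forcing all three $\beta$-recurrences in Proposition \ref{prop:sum}(iv) is equivalent to this, but (ii) is reached directly at the operator level. No computation with the $\vartheta_i$ is needed, and no use of circularity beyond what already went into Lemma \ref{lem:lem2}.
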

\begin{proof} We assume that $\Psi(2,1)=0$ and get a contradiction.
By Lemma \ref{lem:lem2}, there exists $X \in \langle A \rangle$  such that
\begin{align*}
A^2 A^* A - A A^* A^2  = X A^*-A^*X.
\end{align*}
Write $X=f(A)$ where $f \in \mathbb F\lbrack \lambda \rbrack$ has degree at most $d$. So
\begin{align}
A^2 A^* A - A A^* A^2  = f(A) A^*-A^*f(A). \label{eq:eqf}
\end{align}
Let $s$ denote the degree of $f$. We show that $s=3$. First assume that $s>3$. Let $c$ denote the leading coefficient in $f$, and note that $c \not=0$.
We have
\begin{align*}
0 &= E^*_s \Bigl( A^2 A^* A - A A^* A^2  - f(A) A^*+A^*f(A) \Bigr) E^*_0 \\
&= E^*_s A^s E^*_0 c (\theta^*_s - \theta^*_0) \\
&= E^*_s A E^*_{s-1}  A \cdots A E^*_1 A E^*_0 c (\theta^*_s- \theta^*_0)\\
& \not=0,
\end{align*}
for a contradiction. Next assume that $s<3$. We have
\begin{align*}
0 &= E^*_3 \Bigl( A^2 A^* A - A A^* A^2  - f(A) A^*+A^*f(A) \Bigr) E^*_0 \\
&= E^*_3 A^3 E^*_0  (\theta^*_1 - \theta^*_2) \\
&= E^*_3  A E^*_2 A E^*_1 A E^*_0  (\theta^*_1 - \theta^*_2)\\
& \not=0,
\end{align*}
for a contradiction.
We have shown that $s=3$. For notational convenience, define $\beta = c^{-1}-1$. In the equation \eqref{eq:eqf}, divide
each side by $c$ and evaluate the result using $c^{-1} = \beta+1$. There exist $\gamma, \varrho \in \mathbb F$
such that
\begin{align*}
A^3 A^* - (\beta+1) A^2 A^* A + (\beta+1) A A^* A^2 - A^* A^3 = \gamma(A^2 A^* - A^* A^2) + \varrho (A A^*- A^* A).
\end{align*}
Rearranging the terms in this equation, we obtain
\begin{align*}
0 = \lbrack A, A^2 A^*-\beta A A^* A + A^* A^2 - \gamma(AA^*+A^* A) -\varrho A^* \rbrack.
\end{align*}
Consequently Proposition \ref{prop:sum}(ii) holds for $\beta$. This contradicts an assumption we made in our strategy, so  $\Psi(2,1)\not=0$.
\end{proof}

\begin{lemma} \label{lem:lem4} The element $A^4 A^*-A^* A^4$ is contained in the span of
\begin{align*}
&AA^*-A^*A, \qquad \quad 
A^2 A^*-A^*A^2, \qquad \quad
A^3 A^*-A^*A^3, \\
&A^2 A^*A-A A^* A^2, \qquad \quad
A^3 A^*A-A A^* A^3.
\end{align*}
\end{lemma}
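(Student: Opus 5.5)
The plan is to realize a suitable combination of $A^3A^*A-AA^*A^3$ and $A^2A^*A-AA^*A^2$ as a commutator $f(A)A^*-A^*f(A)$ with $f$ of degree exactly $4$, and then solve for $A^4A^*-A^*A^4$. Both the elimination of the corner term $E_0A^*E_d$ and the degree count reuse the ideas of Lemma \ref{lem:lem3}.

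\emph{Step 1: write the two elements via Lemma \ref{lem:lem2}.} For $0\le i\le d-1$ the element $(E_0+\cdots+E_i)A^*-A^*(E_0+\cdots+E_i)$ has the form $XA^*-A^*X$ with $X\in\langle A\rangle$. So Lemma \ref{lem:lem2} gives $X_1,X_2\in\langle A\rangle$ with
\begin{align*}
A^2A^*A-AA^*A^2 &= X_1A^*-A^*X_1+\Psi(2,1)E_0A^*E_d,\\
A^3A^*A-AA^*A^3 &= X_2A^*-A^*X_2+\Psi(3,1)E_0A^*E_d.
\end{align*}
By Lemma \ref{lem:lem3}, $\Psi(2,1)\neq 0$. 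Subtracting $\Psi(3,1)/\Psi(2,1)$ times the first line from the second cancels the $E_0A^*E_d$ term. Hence
\begin{align*}
Z:=A^3A^*A-AA^*A^3-\frac{\Psi(3,1)}{\Psi(2,1)}\bigl(A^2A^*A-AA^*A^2\bigr)=f(A)A^*-A^*f(A)
\end{align*}
for some $f\in\mathbb F[\lambda]$ of degree at most $d$.

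\emph{Step 2: show $\deg f=4$.} Use the lower Hessenberg shape of $A$ with respect to $\{E^*_i\}$: $E^*_sA^kE^*_0=0$ for $k<s$, and $E^*_sA^sE^*_0=E^*_sAE^*_{s-1}A\cdots AE^*_0\neq 0$. Note that $Z$ involves words with at most four factors of $A$.
\begin{itemize}
\item If $s=\deg f>4$, let $c\neq 0$ be the leading coefficient of $f$. Sandwiching $Z-f(A)A^*+A^*f(A)=0$ between $E^*_s$ and $E^*_0$ kills everything except $c(\theta^*_0-\theta^*_s)E^*_sA^sE^*_0\neq 0$, a contradiction.
\item If $\deg f\le 3$, sandwich the same identity between $E^*_4$ and $E^*_0$; this is where $d\ge 4$ is used. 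The $f$-part vanishes. The terms $A^2A^*A$ and $AA^*A^2$ vanish because they contain only three factors of $A$. The remaining terms give $E^*_4A^4E^*_0(\theta^*_1-\theta^*_3)$, which is nonzero since the $\theta^*_i$ are distinct. This is again a contradiction.
\end{itemize}
So $\deg f=4$.

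\emph{Step 3: solve for $A^4A^*-A^*A^4$.} Write $f(\lambda)=c\lambda^4+a_3\lambda^3+a_2\lambda^2+a_1\lambda+a_0$ with $c\neq 0$. The constant term contributes nothing to the commutator. Therefore
\begin{align*}
A^4A^*-A^*A^4=c^{-1}\Bigl(Z-\sum_{k=1}^{3}a_k\bigl(A^kA^*-A^*A^k\bigr)\Bigr).
\end{align*}
The right-hand side lies in the span of the five listed elements.

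The main obstacle is Step 2, which mirrors Lemma \ref{lem:lem3}. The care needed is in confirming that $E^*_4(\cdot)E^*_0$ kills exactly the terms claimed, so that the surviving coefficient is $\theta^*_1-\theta^*_3$.
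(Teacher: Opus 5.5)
Your proposal is correct and follows the paper's proof step for step. You cancel the $E_0A^*E_d$ term using $\Psi(2,1)\neq 0$, write the resulting combination as $f(A)A^*-A^*f(A)$, and force $\deg f=4$ by sandwiching with $E^*_s,E^*_0$ and with $E^*_4,E^*_0$, the latter giving the coefficient $\theta^*_1-\theta^*_3$. The only slip is cosmetic: in the $\deg f>4$ case the surviving term is $c(\theta^*_s-\theta^*_0)E^*_sA^sE^*_0$ rather than $c(\theta^*_0-\theta^*_s)E^*_sA^sE^*_0$, which does not matter since it is nonzero either way.
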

\begin{proof} 
By Lemmas \ref{lem:lem2},  \ref{lem:lem3} we have
\begin{align*}
&A^3 A^* A -A A^* A^3 - \frac{\Psi(3,1)}{\Psi(2,1)} \Bigl( A^2 A^* A - A A^* A^2\Bigr) \\
&= \sum_{i=0}^{d-1} (\theta_i^3 \theta_{i+1} - \theta_i \theta_{i+1}^3) \Bigl(  (E_0  + \cdots + E_i ) A^* - A^* (E_0 + \cdots + E_i)  \Bigr) \\
 &\qquad -     \frac{\Psi(3,1)}{\Psi(2,1)}  \sum_{i=0}^{d-1} (\theta_i^2 \theta_{i+1} - \theta_i \theta_{i+1}^2) \Bigl(  (E_0  + \cdots + E_i ) A^* - A^* (E_0 + \cdots + E_i)         \Bigr).
\end{align*}
Consequently, there exists $Y \in  \langle A \rangle$  such that
\begin{align*}
&A^3 A^* A -A A^* A^3 - \frac{\Psi(3,1)}{\Psi(2,1)} \Bigl( A^2 A^* A - A A^* A^2\Bigr) =YA^*-A^*Y.
\end{align*}
Write $Y=g(A)$ where $g \in \mathbb F\lbrack \lambda \rbrack$ has degree at most $d$. So
\begin{align}
&A^3 A^* A -A A^* A^3 - \frac{\Psi(3,1)}{\Psi(2,1)} \Bigl( A^2 A^* A - A A^* A^2\Bigr) =g(A)A^*-A^*g(A).  \label{eq:geqf}
\end{align}
Let $t$ denote the degree of $g$. We show that $t=4$. First assume that $t>4$. Let $k$ denote the leading coefficient in $g$, and note that $k \not=0$.
We have
\begin{align*}
0 &= E^*_t \biggl(  A^3 A^* A -A A^* A^3 - \frac{\Psi(3,1)}{\Psi(2,1)} \Bigl( A^2 A^* A - A A^* A^2\Bigr) -g(A)A^*+A^*g(A)                   \biggr) E^*_0 \\
&= E^*_t A^t E^*_0 k (\theta^*_t - \theta^*_0) \\
&= E^*_t  A E^*_{t-1}  A \cdots A E^*_1 A E^*_0 k (\theta^*_t- \theta^*_0)\\
& \not=0,
\end{align*}
for a contradiction. Next assume that $t<4$. We have
\begin{align*}
0 &= E^*_4 \biggl(  A^3 A^* A -A A^* A^3 - \frac{\Psi(3,1)}{\Psi(2,1)} \Bigl( A^2 A^* A - A A^* A^2\Bigr) -g(A)A^*+A^*g(A)             \biggr) E^*_0 \\
&= E^*_4 A^4 E^*_0  (\theta^*_1 - \theta^*_3) \\
&= E^*_4 A E^*_3 A E^*_2 A E^*_1 A E^*_0  (\theta^*_1 - \theta^*_3)\\
& \not=0,
\end{align*}
for a contradiction.
We have shown that $t=4$.
The result follows from \eqref{eq:geqf} and the fact that $g$ has degree $4$.
\end{proof}

\begin{lemma} \label{lem:lem5} There exists $\beta \in \mathbb F$ such that
\begin{align*}
A^4 A^*-A^* A^4 -\beta (A^3 A^* A- A A^* A^3) 
\end{align*}
 is contained in the span of
\begin{align}
&AA^*-A^*A, \qquad 
A^2 A^*-A^*A^2, \qquad 
A^3 A^*-A^*A^3, \qquad 
A^2 A^*A-A A^* A^2.
\label{eq:span5}
\end{align}
\end{lemma}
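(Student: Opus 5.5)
By Lemma \ref{lem:lem4} there are scalars $a_1,a_2,a_3,b,\beta$ such that
\begin{align*}
A^4A^*-A^*A^4 = a_1(AA^*-A^*A)+a_2(A^2A^*-A^*A^2)+a_3(A^3A^*-A^*A^3) + b(A^2A^*A-AA^*A^2)+\beta(A^3A^*A-AA^*A^3).
\end{align*}
Moving the last term to the left-hand side gives the statement, with this coefficient $\beta$. So the plan is to read the coefficients off from Lemma \ref{lem:lem4} and regroup; the only work is to confirm that Lemma \ref{lem:lem4} really yields such an expansion.

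To make this explicit, I would return to \eqref{eq:geqf}. There $g$ has degree exactly $4$, with nonzero leading coefficient $k$. Write $g(\lambda)=k\lambda^4+\sum_{j=0}^{3} g_j\lambda^j$. The constant term $g_0$ contributes nothing, since $g_0 I$ commutes with $A^*$. Hence
\begin{align*}
g(A)A^*-A^*g(A) = k(A^4A^*-A^*A^4)+\sum_{j=1}^{3} g_j(A^jA^*-A^*A^j).
\end{align*}
Substituting this into \eqref{eq:geqf} and dividing by $k$, I get
\begin{align*}
A^4A^*-A^*A^4 - k^{-1}\bigl(A^3A^*A-AA^*A^3\bigr) &= -k^{-1}\frac{\Psi(3,1)}{\Psi(2,1)}\bigl(A^2A^*A-AA^*A^2\bigr) - \sum_{j=1}^{3} k^{-1}g_j(A^jA^*-A^*A^j).
\end{align*}
Thus the choice is $\beta=k^{-1}$, and the right-hand side lies in the span \eqref{eq:span5}.

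There is no serious obstacle. The one substantive point is that $k\neq 0$, that is, that $g$ has degree exactly $4$. This was already established in the proof of Lemma \ref{lem:lem4}, using the circular/Hessenberg structure through $E^*_4A^4E^*_0\neq 0$ and $E^*_tA^tE^*_0\neq 0$. The hypothesis $\Psi(2,1)\neq0$ from Lemma \ref{lem:lem3} is needed only so that the coefficient $\Psi(3,1)/\Psi(2,1)$ in \eqref{eq:geqf} makes sense.
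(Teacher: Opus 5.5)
Your proposal is correct and is essentially the paper's proof, which just cites Lemma \ref{lem:lem4} and moves the $A^3A^*A-AA^*A^3$ term to the left-hand side. Your extra unpacking of \eqref{eq:geqf}, which identifies $\beta=k^{-1}$ with $k\neq 0$ the leading coefficient of $g$, is also correct but not needed.
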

\begin{proof} By Lemma \ref{lem:lem4}.
\end{proof}

\noindent We bring in some notation.
For $i \in \mathbb Z$  define 
\begin{align*}
\theta^*_i = \theta^*_r, \qquad \qquad E^*_i = E^*_r
\end{align*}
where $r$ is the remainder upon dividing $i$ by $d+1$.
Note that $0 \leq r \leq d$, and $d+1$ divides $i-r$. 
By construction, for $i \in \mathbb Z$ we have
\begin{align}
\theta^*_i = \theta^*_{i+d+1}, \qquad \qquad E^*_i = E^*_{i+d+1}. \label{eq:mod}
\end{align}
\noindent Our next general goal is to show that
\begin{align*}
\theta^*_{i-3} - \theta^*_{i+1} = \beta (\theta^*_{i-2}-\theta^*_i) \qquad \qquad i \in \mathbb Z,
\end{align*}
\noindent  where $\beta$ is from Lemma \ref{lem:lem5}. To reach our goal, it is convenient to adjust our point of view. We will describe $\Phi$ using a directed graph.
\begin{definition} \label{def1} \rm
 Define a set $\mathcal D= \lbrace 0,1,\ldots, d\rbrace$. An element in $\mathcal D$ is called a {\it vertex}.
We turn $\mathcal D$ into a directed graph as follows. For vertices $i,j\in \mathcal D$ we draw a directed arc from $i$ to $j$ (written $i \rightarrow j$) whenever $E^*_j A E^*_i \not=0$.
\end{definition}
\begin{remark} \label{rem2} \rm
The graph $\mathcal D$ is described as follows.
We have   $d \rightarrow 0$.   For $1 \leq i \leq d$ we have     $i-1 \rightarrow i$ 
and we might have $i \rightarrow i-1$. For $0 \leq i \leq d$ we might have $i \rightarrow i$.
The graph $\mathcal D$ has no further arcs.
\end{remark}
\begin{definition}\label{def3} \rm
For $n \in \mathbb N$, a {\it walk of length $n$} in $\mathcal D$ is a sequence of vertices $\lbrace x_i \rbrace_{i=0}^n$ 
such that $x_0 \rightarrow x_1 \rightarrow \cdots \rightarrow x_n$. This walk is said to be {\it from $x_0$ to $x_n$}.
\end{definition}
\begin{lemma}\label{lem:nz}
Pick $n\in \mathbb N$ and  $x_0, x_1, \ldots, x_n \in \mathcal D$. The sequence $\lbrace x_i \rbrace_{i=0}^n$ 
is a walk in $\mathcal D$ if and only if
\begin{align*}
E^*_{x_n} A E^*_{x_{n-1}} A \cdots A E^*_{x_1} A E^*_{x_0} \not=0.
\end{align*}
\end{lemma}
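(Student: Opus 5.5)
The two directions rest on different facts. The "if" direction is immediate from Definition \ref{def1}. The "only if" direction uses the shape of $A$ relative to the eigenspaces of $A^*$: $A$ maps the one-dimensional space $E^*_{x_{k-1}}V$ onto the one-dimensional space $E^*_{x_k}V$ whenever $x_{k-1}\rightarrow x_k$. Throughout, we use that $A^*$ is multiplicity-free (Lemma \ref{lem:MF}), so each $E^*_i$ has rank one with $E^*_iV$ one-dimensional.

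For the "if" direction, assume the product is nonzero. If some consecutive pair $x_{k-1}, x_k$ had $E^*_{x_k} A E^*_{x_{k-1}}=0$, then the product would contain this factor and vanish. This uses $E^*_{x_k}E^*_{x_k}=E^*_{x_k}$ to regroup the product as $\cdots (E^*_{x_k} A E^*_{x_{k-1}}) \cdots$. Hence $x_{k-1}\rightarrow x_k$ for $1\le k\le n$, and the sequence is a walk.

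For the "only if" direction, suppose $x_0\rightarrow x_1\rightarrow\cdots\rightarrow x_n$. For each $i$ choose a nonzero $v^*_i\in E^*_iV$, so $E^*_iV=\mathbb F v^*_i$. Since $E^*_jAE^*_i$ kills $E^*_kV$ for $k\neq i$ and maps $E^*_iV$ into $E^*_jV$, we have $E^*_jAE^*_i v^*_i=c_{j,i}v^*_j$ for a scalar $c_{j,i}$. Moreover $E^*_jAE^*_i\neq 0$ if and only if $c_{j,i}\neq 0$. Equivalently, $c_{j,i}$ is the $(j,i)$-entry of the matrix representing $A$ with respect to the basis $\lbrace v^*_i\rbrace_{i=0}^d$. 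Applying the product to $v^*_{x_0}$ and peeling off one factor at a time gives
\begin{align*}
E^*_{x_n} A E^*_{x_{n-1}} A \cdots A E^*_{x_1} A E^*_{x_0} v^*_{x_0} = c_{x_n,x_{n-1}}\cdots c_{x_2,x_1} c_{x_1,x_0}\, v^*_{x_n}.
\end{align*}
Each factor $c_{x_k,x_{k-1}}$ is nonzero by the walk hypothesis, so the right-hand side is nonzero. Hence the product is nonzero.

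The only point needing care is the rank-one property of the $E^*_i$, which guarantees that the composition of nonzero maps between consecutive one-dimensional eigenspaces stays nonzero. In general, a product of nonzero maps can vanish, so this is where multiplicity-freeness is essential. Beyond that, no real obstacle is expected. The case $n=0$ is trivial, since $E^*_{x_0}\neq 0$ and every single vertex is a walk of length $0$.
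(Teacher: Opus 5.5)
Your proposal is correct and follows the paper's argument. The paper's proof is simply ``by Definition~\ref{def1} and since each $E^*_\ell$ has rank $1$,'' and you have spelled out exactly those two ingredients: the ``if'' direction comes from the definition of an arc, and the ``only if'' direction comes from the rank-one property, made explicit through the matrix entries $c_{j,i}$ of $A$ with respect to an eigenbasis of $A^*$.
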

\begin{proof} By Definition \ref{def1} and since $E^*_\ell$ has rank $1$ for $0 \leq \ell \leq d$.
\end{proof}

\begin{definition}\label{def6} \rm
For a walk $\lbrace x_i \rbrace_{i=0}^n$ 
 in $\mathcal D$, its
 {\it winding number} is
\begin{align*}
\bigl \vert \lbrace i \vert 1 \leq i \leq n, \; \; x_{i-1} = d, \; x_i =0 \rbrace \bigr\vert.
\end{align*}
\end{definition}
\begin{remark} \label{lem:shortwalk}
A walk in $\mathcal D$ of length at most $d$ has winding number $0$ or $1$.
\end{remark}
\begin{definition} \label{def16} \rm  A walk $\lbrace x_i \rbrace_{i=0}^n$ in $\mathcal D$ is called {\it invertible}
whenever the sequence $\lbrace x_{n-i} \rbrace_{i=0}^n$ is a walk in $\mathcal D$. In this case, we say that the walks 
$\lbrace x_i \rbrace_{i=0}^n$
 and
$\lbrace x_{n-i} \rbrace_{i=0}^n$
 are {\it inverses}.
\end{definition} 
\begin{remark} An invertible walk has winding number 0.
\end{remark}

\begin{definition}\label{def4} \rm
A {\it loop} in $\mathcal D$ is a walk of the form $i,i$.
A {\it backtrack} in $\mathcal D$ is a walk of the form $i,j,i$ with $i \not=j$.
\end{definition}
\begin{remark}\label{rem5} \rm
For distinct $i,j \in \mathcal D$ the sequence $i,j,i$ is a backtrack  if and only if the sequence $j,i,j$ is a backtrack.
\end{remark}
\begin{definition}\label{def7}\rm For $n \in \mathbb N$,
 a {\it path of length $n$} in $\mathcal D$ is a walk $\lbrace x_i \rbrace_{i=0}^n$
  in $\mathcal D$ such that $x_{i-1} \not=x_i$ for $1 \leq i \leq n$ and
$x_{i-1} \not=x_{i+1} $ for $1 \leq i \leq n-1$.  Thus, a path  is a walk that has no loops and no backtracks.
\end{definition}

\begin{remark}\label{rem8} \rm
Every path in $\mathcal D$ has the form
\begin{align*}
& i \rightarrow i+1 \rightarrow \cdots \rightarrow d \rightarrow 0 \rightarrow 1 \rightarrow \cdots \rightarrow d \rightarrow 0 \rightarrow 1 \rightarrow \cdots \\
&\qquad \cdots \rightarrow d \rightarrow 0 \rightarrow 1\rightarrow \cdots \rightarrow d \rightarrow 0 \rightarrow 1\rightarrow  \cdots \rightarrow j
\end{align*}
with $0 \leq i,j\leq d$ or
\begin{align*}
j \rightarrow j-1 \rightarrow \cdots \rightarrow i+1 \rightarrow i
\end{align*}
with $0 \leq i \leq j \leq d$. 
\end{remark}

\begin{definition}\label{def11}\rm
Given a walk $\lbrace x_i \rbrace_{i=0}^n$ in $\mathcal D$, we cancel all loops and backtracks to obtain a path $\lbrace y_i \rbrace_{i=0}^m$   in $\mathcal D$
such that $m \leq n$ and $y_0=x_0$ and $y_m=x_n$. The path  $\lbrace y_i \rbrace_{i=0}^m$ is said to {\it support} the walk $\lbrace x_i \rbrace_{i=0}^n$.
\end{definition}
\begin{remark}\label{rem12}\rm
A walk and its supporting path have the same winding number.
\end{remark}
\begin{lemma} \label{wandp} Let $\lbrace x_i \rbrace_{i=0}^n$   denote a walk in $\mathcal D$, and let $\lbrace y_i \rbrace_{i=0}^m$ denote its supporting path.
Then $\lbrace x_i \rbrace_{i=0}^n$ is invertible if and only if  $\lbrace y_i \rbrace_{i=0}^m$ is invertible. In this case, 
the walk $\lbrace x_{n-i} \rbrace_{i=0}^n$ is supported by the path $\lbrace y_{n-i} \rbrace_{i=0}^m$.
\end{lemma}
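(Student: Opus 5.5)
We have to prove Lemma \ref{wandp}: a walk is invertible if and only if its supporting path is invertible, and in that case the reversed walk is supported by the reversed path. The natural approach is induction on the number of cancellations used to pass from $\lbrace x_i \rbrace_{i=0}^n$ to $\lbrace y_i \rbrace_{i=0}^m$. This requires two facts about a single cancellation step, together with the observation that the support is well defined.

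\emph{Well-definedness.} Cancellation of loops and backtracks is a rewriting system on sequences. Each step shortens the sequence, so the process terminates. Its critical overlaps are of the forms $i,i,i$, then $i,j,i,j$, then $i,i,j,i$, and so on; each of these resolves to the same result either way, as in free reduction in a free group. Hence the reduced form is unique. Since the paper treats the supporting path as a definite object, I will take this as given and only remark on it briefly.

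\emph{Key step.} Let $w'$ be obtained from a walk $w$ by one cancellation. Then $w$ is invertible if and only if $w'$ is invertible.
\begin{enumerate}
\item[\rm (a)] Removing a loop $x_k=x_{k+1}$ yields $w'$. The arcs of $w'$ are arcs of $w$, so the reverse of $w'$ consists of reverses of arcs of $w$. Conversely, if $w'$ is invertible, inserting the loop $x_k\to x_k$ into the reverse of $w'$ gives the reverse of $w$. This is legitimate because a loop is its own reverse and $x_k\to x_k$ is an arc.
\item[\rm (b)] Removing a backtrack $x_{k-1},x_k,x_{k+1}$ with $x_{k-1}=x_{k+1}$ replaces it by the single vertex $x_{k-1}$. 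Both arcs $x_{k-1}\to x_k$ and $x_k\to x_{k-1}$ exist, so the two arcs of a backtrack are mutually inverse and are always reversible (Remark \ref{rem5}). The reverse of $w$ is then the reverse of $w'$ with the backtrack $x_{k-1},x_k,x_{k-1}$ inserted, and conversely.
\end{enumerate}
In both cases the argument reduces to checking, arc by arc, that every arc not involved in the cancellation appears in both walks, while the arcs that are involved are automatically reversible.

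\emph{Reversal.} If $w'$ is obtained from $w$ by a cancellation, then the reverse of $w'$ is obtained from the reverse of $w$ by the mirror cancellation, since the reverse of a loop is a loop and the reverse of a backtrack is a backtrack. Iterating, the reverse of the walk reduces to the reverse of the path. The reverse of the path is again a path, because the conditions of Definition \ref{def7} are symmetric under reversal. Hence the reverse of $\lbrace x_i\rbrace_{i=0}^n$ is supported by $\lbrace y_{m-i}\rbrace_{i=0}^m$. (The statement writes $y_{n-i}$; I read this as $y_{m-i}$.)

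\emph{Main obstacle.} The only delicate point is the well-definedness of the support, since uniqueness of the reduced form is what makes ``supported by'' meaningful. The local confluence check above handles it, and I expect the rest to be routine.
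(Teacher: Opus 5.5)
Your argument is correct and is essentially the paper's proof made explicit. The paper just cites Definition~\ref{def11} and the construction, which amounts to your observation that each loop or backtrack cancellation preserves invertibility (the surviving arcs are old arcs, and the removed arcs are self-inverse or mutually inverse) and commutes with reversal; your reading of $y_{n-i}$ as $y_{m-i}$ is right. The confluence discussion is more than you need: each cancellation preserves the endpoints and the winding number (the arc $d \rightarrow 0$ never lies in a loop or backtrack), and by Remark~\ref{rem8} a path is determined by these data, so the supporting path is automatically unique.
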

\begin{proof} By Definition \ref{def11} and the construction.
\end{proof}

\noindent  The following definition is for later use.
\begin{definition}\label{def:pp} \rm 
For a walk (resp. path) $\lbrace x_i \rbrace_{i=0}^n$ in $\mathcal D$, the associated {\it walk-product} (resp. {\it path-product}) is the element
\begin{align*}
E^*_{x_n} A E^*_{x_{n-1}} A \cdots A E^*_{x_1} A E^*_{x_0}.
\end{align*}
\end{definition}

\noindent Shortly we will define the weight of a walk. We will use the following result.

\begin{lemma}\label{lem14}
 For $0 \leq i \leq d$ we have
\begin{align*}
 E^*_i A E^*_i = a_i E^*_i
 \end{align*}
 where
\begin{align*}
a_i = {\rm tr}(E^*_i A) = {\rm tr}(A E^*_i). 
\end{align*}
For $1 \leq i \leq d$ we have
\begin{align*}
E^*_i A E^*_{i-1} A E^*_i = x_i E^*_i, \qquad \qquad E^*_{i-1} A E^*_i A E^*_{i-1} = x_i E^*_{i-1}
\end{align*}
where
\begin{align*}
x_i = {\rm tr}(E^*_i A E^*_{i-1} A) = {\rm tr}(A E^*_{i-1} A E^*_i) = {\rm tr}(E^*_{i-1} A E^*_i A)  ={\rm tr}(A E^*_i A E^*_{i-1}). 
\end{align*}
\end{lemma}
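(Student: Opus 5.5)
The proof uses only the fact that each $E^*_i$ is a rank-one idempotent, together with the identity ${\rm tr}(XY)={\rm tr}(YX)$. The key observation is that for any $X \in {\rm End}(V)$ we have $E^*_i X E^*_i = {\rm tr}(E^*_i X) E^*_i$. To see this, note that $E^*_i V$ is one-dimensional and $E^*_i X E^*_i$ maps $V$ into $E^*_i V$ and kills $(I-E^*_i)V$. Hence $E^*_i X E^*_i = c E^*_i$ for some scalar $c$. Taking traces and using ${\rm tr}(E^*_i)=1$ (rank-one idempotent) gives
\begin{align*}
c = {\rm tr}(E^*_i X E^*_i) = {\rm tr}(E^*_i E^*_i X) = {\rm tr}(E^*_i X).
\end{align*}

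For the first assertion, apply this with $X=A$ to get $E^*_i A E^*_i = a_i E^*_i$ with $a_i={\rm tr}(E^*_iA)$. Cyclicity of trace gives ${\rm tr}(E^*_iA)={\rm tr}(AE^*_i)$.

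For the second assertion, fix $1\le i\le d$. Apply the observation at index $i$ with $X = A E^*_{i-1} A$. This gives $E^*_i A E^*_{i-1} A E^*_i = {\rm tr}(E^*_i A E^*_{i-1} A)\, E^*_i$. Similarly, applying it at index $i-1$ with $X = A E^*_i A$ gives $E^*_{i-1} A E^*_i A E^*_{i-1} = {\rm tr}(E^*_{i-1} A E^*_i A)\, E^*_{i-1}$.

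It then remains to check that the four traces listed for $x_i$ agree. Each is a cyclic rotation of the word $E^*_i A E^*_{i-1} A$. The rotations are $A E^*_{i-1} A E^*_i$, then $E^*_{i-1} A E^*_i A$, then $A E^*_i A E^*_{i-1}$. So all four traces coincide by ${\rm tr}(XY)={\rm tr}(YX)$, and the two scalars found in the previous step are both equal to $x_i$.

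There is no real obstacle here. The only point needing care is the rank-one observation, which relies on $A^*$ being multiplicity-free (Lemma \ref{lem:MF}). That is what guarantees each $E^*_i$ has rank one and trace $1$. The Hessenberg and circular hypotheses are not used.
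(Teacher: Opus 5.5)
Your proof is correct and essentially matches the paper's. The paper gives no argument of its own; it cites the analogous definition and lemma for Leonard pairs, whose proof is the same rank-one argument you give: $E^*_i X E^*_i = {\rm tr}(E^*_i X)\,E^*_i$ because $E^*_i$ is a rank-one idempotent with trace $1$, combined with cyclicity of the trace.
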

\begin{proof} This is a variation on \cite[Definition~7.1]{qrac} and \cite[Lemma~7.5]{qrac}.
\end{proof}

\begin{definition}\label{def15} \rm Let $\lbrace x_i \rbrace_{i=0}^n$ 
 denote a walk  in $\mathcal D$. By Lemma \ref{lem:nz} the associated walk-product is nonzero:
\begin{align}
E^*_{x_n} A E^*_{x_{n-1}} A \cdots A E^*_{x_1} A E^*_{x_0}. \label{eq:cp}
\end{align}
Reducing the product \eqref{eq:cp} using Lemma \ref{lem14},
we find that the product \eqref{eq:cp} is equal to 
a nonzero scalar multiple of the path-product
\begin{align*}
E^*_{y_m} A E^*_{y_{m-1}} A \cdots A E^*_{y_1} A E^*_{y_0}
\end{align*}
associated with the path $\lbrace y_i \rbrace_{i=0}^m$ in $\mathcal D$ that supports the walk $\lbrace x_i \rbrace_{i=0}^n$. The nonzero scalar multiple is called the {\it weight} of the walk $\lbrace x_i \rbrace_{i=0}^n$.
\end{definition}
\begin{remark}\label{rem16} \rm
Every path in $\mathcal D$ has weight 1.
\end{remark}

\begin{lemma} \label{lem17} Inverse walks have the same weight.
\end{lemma}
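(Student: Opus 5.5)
Let $\lbrace x_i\rbrace_{i=0}^n$ be an invertible walk, with inverse $\lbrace x_{n-i}\rbrace_{i=0}^n$. By Lemma \ref{wandp} its supporting path $\lbrace y_i\rbrace_{i=0}^m$ is invertible, and the inverse walk is supported by the reversed path. An invertible path has winding number $0$, so it is a monotone path $j\to j-1\to\cdots\to i$ or $i\to i+1\to\cdots\to j$ with $0\le i\le j\le d$. The plan is to compute the weight as a product of scalars $a_\ell$ and $x_\ell$ from Lemma \ref{lem14}, one factor for each cancellation, and to check that the inverse walk collects the same multiset of factors.

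\textbf{Step 1: the weight is a product of cancellation factors.} We argue by induction on $n$. If the walk is a path, its weight is $1$ by Remark \ref{rem16}, and so is the weight of its inverse. Otherwise the walk contains a loop $x_{k-1}=x_k$, or a backtrack $x_{k-1},x_k,x_{k+1}$ with $x_{k-1}=x_{k+1}$.
\begin{itemize}
\item A loop at vertex $\ell$ occurs as the factor $E^*_\ell A E^*_\ell$ in the walk-product. By Lemma \ref{lem14} it equals $a_\ell E^*_\ell$, so we may delete one copy of $\ell$ at cost $a_\ell$.
\item A backtrack between $\ell-1$ and $\ell$, in either direction, occurs as $E^*_\ell A E^*_{\ell-1} A E^*_\ell$ or $E^*_{\ell-1} A E^*_\ell A E^*_{\ell-1}$. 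Each equals $x_\ell$ times the outer idempotent, so we may delete two vertices at cost $x_\ell$.
\item Backtracks through the arc $d\to 0$ cannot occur in an invertible walk, since $0\to d$ is not an arc (by Remark \ref{rem2}, as $d\ge 4$). So every backtrack is of one of the two types above.
\end{itemize}
After the deletion we have a shorter walk. The walk-product is the scalar times the new walk-product, and the new walk-product is nonzero. Since the supporting path is unique (it is the free reduction), the weight is the product of the scalars collected along any reduction sequence.

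\textbf{Step 2: reversal preserves the factors.} Reversing a walk sends a loop at $\ell$ to a loop at $\ell$, and a backtrack $\ell,\ell-1,\ell$ to itself reversed. This is again a backtrack between the same two vertices, with the same scalar $x_\ell$ by the symmetric formulas in Lemma \ref{lem14}. So if we apply the mirrored reduction steps to the inverse walk, we collect the identical multiset of factors $a_\ell$ and $x_\ell$. Each intermediate walk remains the reverse of the corresponding intermediate walk for the original, so each remains a valid (invertible) walk. The reduction ends at the reversed supporting path, which has weight $1$. Hence the two weights agree.

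\textbf{Main obstacle.} The delicate point is well-definedness: that the weight does not depend on the order of cancellations. This is automatic, because the weight is defined by an equality of nonzero operators, $W = (\text{weight})\cdot P$, where $P$ is the nonzero path-product. Any reduction order yields some scalar $c$ with $W=cP$, so $c$ must equal the weight.

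The remaining care is to check that the intermediate walks really are walks, so that the reductions are legitimate. Each deletion of a loop vertex or backtrack middle vertex leaves consecutive vertices joined by an existing arc. For a loop this is the same arc as before. For a backtrack $u,v,u$ deleted down to $u$, the neighbours of the block are untouched, so the arcs into and out of $u$ are unchanged.
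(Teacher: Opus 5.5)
Your proposal is correct and follows the same route as the paper, whose proof just cites Lemma \ref{wandp} and Definition \ref{def15}; you have spelled out the mirrored reduction and the well-definedness of the weight that this citation leaves implicit. One small simplification: loops $\ell,\ell$ and backtracks $u,v,u$ are palindromes, so reversal maps each cancellation to literally the same operator factor, and you do not even need the symmetry of $x_\ell$ from Lemma \ref{lem14}.
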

\begin{proof}  By Lemma \ref{wandp} and Definition \ref{def15}.
\end{proof}

\begin{lemma} \label{prop:rsij} Consider the products
\begin{align}
E^*_i A^r A^* A^s E^*_j, \qquad \qquad E^*_j A^s A^* A^r E^*_i        \label{eq:2prod}
\end{align}
with $0 \leq i < j \leq d$ and $r,s \in \mathbb N$ and $r+s\leq d$.
Then there exist $\alpha_1, \alpha_2  \in \mathbb F$ such that
\begin{align*}
E^*_i A^r A^* A^s E^*_j &= \alpha_1 E^*_i A E^*_{i+1} A \cdots A E^*_{j-1} A E^*_j   \\
      & \qquad  + \alpha_2 E^*_i A E^*_{i-1} A \cdots A E^*_1 A E^*_0 A E^*_d A E^*_{d-1} A \cdots A E^*_{j+1} A E^*_j, \\
      E^*_j A^s A^* A^r E^*_i &= \alpha_1 E^*_j A E^*_{j-1} A \cdots A E^*_{i+1} A E^*_i.
\end{align*}
\end{lemma}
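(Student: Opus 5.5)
The plan is to expand $A^r A^* A^s$ by writing $A^* = \sum_{\ell=0}^d \theta^*_\ell E^*_\ell$ and inserting $I=\sum_k E^*_k$ between consecutive factors of $A$. This gives
\begin{align*}
E^*_i A^r A^* A^s E^*_j = \sum \theta^*_{x_s}\, E^*_{x_{r+s}} A E^*_{x_{r+s-1}} A \cdots A E^*_{x_1} A E^*_{x_0},
\end{align*}
where the sum runs over all walks $x_0=j, x_1,\ldots, x_{r+s}=i$ of length $r+s$ in $\mathcal D$, and $x_s$ is the vertex at the position of $A^*$. By Lemma \ref{lem:nz}, only genuine walks contribute. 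By Definition \ref{def15}, each walk-product equals its weight times the path-product of its supporting path. So $E^*_i A^r A^* A^s E^*_j$ is a linear combination of path-products of paths from $j$ to $i$ of length at most $r+s\leq d$.

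Next I will use Remark \ref{rem8} to list the paths from $j$ to $i$ with $i<j$ and length at most $d$. A descending path $j\to j-1\to\cdots\to i$ has length $j-i$. A forward path goes $j\to j+1\to\cdots\to d\to 0\to\cdots\to i$ and has length $d+1-(j-i)$. Any path with winding number $\geq 2$ has length greater than $d$. So only these two paths occur, and their path-products are exactly the two products in the statement. This gives scalars $\alpha_1$ (from walks supported on the descending path) and $\alpha_2$ (from walks supported on the forward path) with the first displayed equation.

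For the reverse product $E^*_j A^s A^* A^r E^*_i$, the same expansion gives a sum over walks of length $r+s$ from $i$ to $j$. A path from $i$ to $j$ with $i<j$ of length at most $d$ must be the ascending path $i\to i+1\to\cdots\to j$. The alternative would wind through $i\to i-1\to\cdots\to 0$, but by Remark \ref{rem2} that requires backward arcs, and a path cannot go backward and then wrap forward to $d\to 0$. So only one path-product appears, and its coefficient is some scalar $\alpha'$.

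The main step is to show $\alpha'=\alpha_1$. Reversing a walk gives a bijection between walks from $j$ to $i$ and walks from $i$ to $j$, restricted to invertible walks. The $A^*$ factor sits at position $s$ in one and at the mirrored position in the other, so the coefficient $\theta^*_{x_s}$ is preserved under the reversal $x_k\mapsto x_{r+s-k}$. By Lemma \ref{wandp}, a walk from $j$ to $i$ is invertible exactly when its supporting path is invertible. The descending path is invertible (its reverse is the ascending path), while the forward path has winding number $1$ and is not invertible. Hence the walks contributing to $\alpha_1$ are exactly the inverses of the walks contributing to $\alpha'$: every walk from $i$ to $j$ of length at most $d$ is supported by the ascending path, which is invertible, so that walk is itself invertible. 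Lemma \ref{lem17} says inverse walks have the same weight, so the sums agree term by term and $\alpha'=\alpha_1$. The point needing the most care is this bookkeeping: checking that $\theta^*_{x_s}$ is attached consistently under reversal, and that no walk on either side lacks a partner.
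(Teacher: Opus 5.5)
Your approach is the paper's: expand into walk-products, reduce to the two possible supporting paths, and match coefficients using inverse walks and Lemma \ref{lem17}. But the matching step has a gap. You tacitly assume that the descending path $j\to j-1\to\cdots\to i$ exists. By Remark \ref{rem2} the arcs $k\to k-1$ are optional, since the circular Hessenberg conditions never force $E^*_{k-1}AE^*_k\neq 0$.

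Suppose some arc $k\to k-1$ with $i<k\le j$ is missing. Then the ascending path $i\to i+1\to\cdots\to j$ is \emph{not} invertible, so your sentence ``every walk from $i$ to $j$ \ldots\ is supported by the ascending path, which is invertible'' is false. By Lemma \ref{wandp}, no walk from $i$ to $j$ is then invertible, and no walk from $j$ to $i$ is supported on the descending path. So your $\alpha_1$, defined as a sum over such walks, is an empty sum and equals $0$. Meanwhile $\alpha'$ is in general nonzero. For instance, if $r+s=j-i$, the only walk of that length from $i$ to $j$ is the ascending path itself, with weight $1$, giving $\alpha'=\theta^*_{i+r}$. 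So the claimed equality $\alpha'=\alpha_1$ fails with your definition of $\alpha_1$, and the two identities in the statement are not obtained with a common coefficient.

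The repair is the case split the paper makes. When the descending path does not exist, the product $E^*_iAE^*_{i+1}A\cdots AE^*_{j-1}AE^*_j$ is $0$ by Lemma \ref{lem:nz}. The first identity therefore holds with \emph{any} coefficient in front of that product, and you simply set $\alpha_1:=\alpha'$. Only when the descending path exists do you need your inverse-walk bijection, and then it works as you describe, including the preservation of $\theta^*_{x_s}$ under reversal.

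A smaller point: your reason why the ascending path is the only path from $i$ to $j$ of length at most $d$ is muddled. The real competitor is the forward path $i\to\cdots\to d\to 0\to\cdots\to j$ with winding number $1$. It is excluded because its length is $d+1+(j-i)>d$. A backward route cannot help, since there is no arc $0\to d$.
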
 
\begin{proof} First, consider the product on the left in \eqref{eq:2prod}. Using $A^*= \sum_{\ell=0}^d \theta^*_\ell E^*_\ell$ we obtain
\begin{align*}
E^*_i A^r A^* A^s E^*_j = \sum_{\ell=0}^d \theta^*_\ell E^*_i A^r E^*_\ell A^s E^*_j.
\end{align*}
For $0 \leq \ell \leq d$ we write
\begin{align*}
E^*_i A^r E^*_\ell A^s E^*_j = E^*_i A I A I \cdots I A E^*_\ell A I A I \cdots I A E^*_j.
\end{align*}
In this product, eliminate each copy of $I$ using $I = E^*_0 + E^*_1 + \cdots + E^*_d$.
By these comments and Lemma \ref{lem:nz},  $E^*_i A^r A^* A^s E^*_j$ becomes a linear combination of walk-products. This linear combination is over the walks of length $r+s$  from vertex $j$ to vertex $i$.
These walks have winding number $0$ or $1$, by Remark \ref{lem:shortwalk}. For each walk of length $r+s$ from vertex $j$ to vertex $i$, the associated
 walk-product  is equal to the weight of the walk times the path-product associated with the path that supports the
walk. 
There exists at most one path from vertex $j$ to vertex $i$ that has winding number $0$,
and a unique path from vertex $j$ to vertex $i$ that has winding number 1. They are
\begin{align*}
 j \rightarrow j-1 \rightarrow \cdots \rightarrow i, \qquad \qquad 
j \rightarrow j+1 \rightarrow \cdots \rightarrow d \rightarrow 0 \rightarrow 1 \rightarrow \cdots \rightarrow  i.
\end{align*}
 Assume for the moment that the path  $j \rightarrow j-1 \rightarrow \cdots \rightarrow i$ exists. Then
 there exist $\alpha'_1, \alpha_2  \in \mathbb F$ such that
\begin{align*}
E^*_i A^r A^* A^s E^*_j &= \alpha'_1 E^*_i A E^*_{i+1} A \cdots A E^*_{j-1} A E^*_j   \\
      & \qquad  + \alpha_2 E^*_i A E^*_{i-1} A \cdots   A E^*_1 A E^*_0 A E^*_d A E^*_{d-1} A \cdots A E^*_{j+1} A E^*_j.
\end{align*}
 Next assume  that the path  $j \rightarrow j-1 \rightarrow \cdots \rightarrow i$ does not exist. Then
  \begin{align}
 E^*_i A E^*_{i+1} A \cdots A E^*_{j-1} A E^*_j =0 \label{eq:whatif}
 \end{align}
and there exists $ \alpha_2  \in \mathbb F$ such that
\begin{align}
E^*_i A^r A^* A^s E^*_j &=  \alpha_2 E^*_i A E^*_{i-1} A \cdots   A E^*_1 A E^*_0 A E^*_d A E^*_{d-1} A \cdots A E^*_{j+1} A E^*_j. \label{eq:whatif2}
\end{align}
We now consider the product on the right in  \eqref{eq:2prod}. As above, we express 
 $E^*_j A^s A^* A^r E^*_i$ as a linear combination of walk-products. This linear combination is over the walks of length $r+s$  from vertex $i$ to vertex $j$.
These walks have winding number $0$. For each walk of length $r+s$ from vertex $i$ to vertex $j$, the associated
 walk-product is equal to the weight of the walk times the path-product associated with the path that supports the
walk. 
There exists a unique path from vertex $i$ to vertex $j$ that has winding number $0$; it is
\begin{align*}
 i \rightarrow i+1 \rightarrow \cdots \rightarrow j.
\end{align*}
By these comments, there exists $\alpha_1 \in \mathbb F$ such that
\begin{align*}
 E^*_j A^s A^* A^r E^*_i &= \alpha_1 E^*_j A E^*_{j-1} A \cdots A E^*_{i+1} A E^*_i.
 \end{align*}
 We now discuss $\alpha_1, \alpha'_1$.
 For the moment, assume that the path  $j \rightarrow j-1 \rightarrow \cdots \rightarrow i$ exists. Then
 $\alpha_1=\alpha'_1$ in view of Lemmas \ref{wandp}, \ref{lem17}.
 Next assume that the path $j \rightarrow j-1 \rightarrow \cdots \rightarrow i$ does not exist. In this case \eqref{eq:whatif}, \eqref{eq:whatif2} hold,
 so
 \begin{align*}
E^*_i A^r A^* A^s E^*_j &= \alpha_1 E^*_i A E^*_{i+1} A \cdots A E^*_{j-1} A E^*_j   \\
      & \qquad  + \alpha_2 E^*_i A E^*_{i-1} A \cdots   A E^*_1 A E^*_0 A E^*_d A E^*_{d-1} A \cdots A E^*_{j+1} A E^*_j.
\end{align*}
The result follows.
\end{proof}

\begin{lemma}
\label{lem:lem6a} For $i \in \mathbb Z$,
\begin{align}
\theta^*_{i-3} - \theta^*_{i+1} = \beta (\theta^*_{i-2}-\theta^*_i). \label{eq:desire1}
\end{align}
\end{lemma}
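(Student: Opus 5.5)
The plan is to sandwich the identity of Lemma \ref{lem:lem5} between two primitive idempotents of $A^*$ that are four steps apart in the cyclic order of $\mathcal D$, and to read off the coefficient of the single path of length $4$ that joins them.

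By Lemma \ref{lem:lem5} there are scalars $c_1,c_2,c_3,c_4$ such that
\begin{align*}
0 = A^4A^*-A^*A^4-\beta(A^3A^*A-AA^*A^3) - c_1(AA^*-A^*A) - c_2(A^2A^*-A^*A^2)- c_3(A^3A^*-A^*A^3) - c_4(A^2A^*A-AA^*A^2).
\end{align*}
Call the right-hand side $W$. Every term of $W$ has the form $A^rA^*A^s-A^sA^*A^r$ with $r+s\le 4\le d$. Fix $i\in\mathbb Z$. By \eqref{eq:mod} it suffices to show
\begin{align*}
\theta^*_{k}-\theta^*_{k+4}=\beta(\theta^*_{k+1}-\theta^*_{k+3})
\end{align*}
with $k=i-3$, so that the indices are $k,k+1,\ldots,k+4$ read modulo $d+1$.

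\textbf{Non-wrapping case.} Assume the forward walk $k\rightarrow k+1\rightarrow\cdots\rightarrow k+4$ does not pass through the arc $d\rightarrow 0$, so $0\le k$ and $k+4\le d$. Compute $E^*_{k+4}WE^*_k$.
\begin{enumerate}
\item[(a)] Expand each $E^*_{k+4}A^rA^*A^sE^*_k$ as a sum of walk-products, as in the proof of Lemma \ref{prop:rsij}. Each walk has length $r+s$ and runs from $k$ to $k+4$.
\item[(b)] A walk of length at most $4\le d$ from $k$ to $k+4$ has winding number $0$, so it must climb at least $4$ steps. Hence only $r+s=4$ contributes, and the only contributing walk is the path itself, which has weight $1$.
\item[(c)] For the term $A^rA^*A^s$, the factor $A^*$ is evaluated at the vertex reached after $s$ steps, so it contributes $\theta^*_{k+s}$ times the path-product $P=E^*_{k+4}AE^*_{k+3}AE^*_{k+2}AE^*_{k+1}AE^*_k$.
\item[(d)] Therefore $0=E^*_{k+4}WE^*_k=\bigl(\theta^*_k-\theta^*_{k+4}-\beta(\theta^*_{k+1}-\theta^*_{k+3})\bigr)P$.
\item[(e)] Since $P\neq 0$ by Lemma \ref{lem:nz}, the claim follows.
\end{enumerate}

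\textbf{Wrapping case.} Now assume the forward walk of length $4$ passes through the arc $d\rightarrow 0$. Set $j'=k$ and $i'=k+4$ reduced modulo $d+1$, so $0\le i'<j'\le d$, and the forward path from $j'$ to $i'$ is the winding-number-$1$ path $Q$, of length exactly $4$.

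The difficulty is that $E^*_{i'}WE^*_{j'}$ may also receive contributions from winding-number-$0$ walks that go downward from $j'$ to $i'$. Moreover the downward path-product is a rank-one map $E^*_{j'}V\to E^*_{i'}V$, just like the product for $Q$. So the two contributions cannot be separated by linear independence. This is the main obstacle, and I would handle it with Lemma \ref{prop:rsij} together with the antisymmetry of each term of $W$.

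\begin{enumerate}
\item[(a)] Write $T_{r,s}=A^rA^*A^s-A^sA^*A^r$. By Lemma \ref{prop:rsij},
\begin{align*}
E^*_{i'}T_{r,s}E^*_{j'}=\bigl(\alpha_1(r,s)-\alpha_1(s,r)\bigr)D+\bigl(\alpha_2(r,s)-\alpha_2(s,r)\bigr)Q,
\end{align*}
where $D$ is the downward path-product from $j'$ to $i'$ and $Q$ here denotes the path-product of $Q$.
\item[(b)] Sandwiching the other way, Lemma \ref{prop:rsij} gives
\begin{align*}
E^*_{j'}T_{r,s}E^*_{i'}=\bigl(\alpha_1(s,r)-\alpha_1(r,s)\bigr)U,
\end{align*}
where $U$ is the upward path-product from $i'$ to $j'$.
\item[(c)] From $0=E^*_{j'}WE^*_{i'}$ and $U\neq 0$, the combined $\alpha_1$-coefficient vanishes. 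Hence the $D$-part of $E^*_{i'}WE^*_{j'}$ vanishes, whether or not $D$ itself is zero.
\item[(d)] What remains is $0=(\text{coefficient})\cdot Q$, with $Q\neq 0$.
\item[(e)] The $\alpha_2$-coefficients are computed exactly as in the non-wrapping case. A walk of length at most $4$ with winding number $1$ from $j'$ to $i'$ must be $Q$ itself, since $Q$ already has length $4$. So only $r+s=4$ contributes, and $A^*$ contributes $\theta^*_{k+s}$ with indices taken modulo $d+1$.
\item[(f)] This gives the same relation $\theta^*_k-\theta^*_{k+4}=\beta(\theta^*_{k+1}-\theta^*_{k+3})$, which completes the proof for all $i$.
\end{enumerate}
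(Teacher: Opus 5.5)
Your proposal is correct and follows the paper's proof essentially step for step. Your non-wrapping case is the paper's Case I, where there is a single upward path of length $4$. In your wrapping case you use Lemma \ref{prop:rsij} together with the reverse sandwich $E^*_{j'}WE^*_{i'}=0$ to kill the coefficient of the downward path-product before reading off the coefficient of the winding-number-one path, which is exactly the paper's Case II. Your explicit argument that only the $r+s=4$ terms contribute (a walk of length at most $4$ supported by a path of length $4$ must be that path) fills in what the paper leaves as ``by the form of $Z$''.
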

\begin{proof} By Lemma \ref{lem:lem5}, there exists $ \gamma_\ell \in  \mathbb F$ $(1 \leq \ell \leq 4)$ such that
\begin{align*}
&A^4 A^*-A^* A^4 -\beta (A^3 A^* A- A A^* A^3) \\
&\qquad = \gamma_1 (A A^*-A^*A) + \gamma_2 (A^2 A^*-A^*A^2) + \gamma_3 (A^3 A^*-A^*A^3) + \gamma_4 (A^2 A^* A-A A^* A^2).
\end{align*}
For this equation, let $Z$ denote the left-hand side minus the right-hand side. So $Z=0$. We have
\begin{align*}
0 = E^*_{i-3} Z E^*_{i+1}, \qquad \qquad \quad 0 = E^*_{i+1} Z E^*_{i-3}.
\end{align*}
By the discussion above \eqref{eq:mod}, 
without loss of generality we may assume that $0 \leq i \leq d$. 
We now break the argument into two cases.
\medskip

\noindent {Case I}:  $3 \leq i \leq d-1$. Note that $i-3, i+1 \in \mathcal D$ and $i-3<i+1$.
By Lemma \ref{prop:rsij}, there exists $a_1 \in \mathbb F$ such that
\begin{align*}
E^*_{i+1} Z E^*_{i-3} =  a_1 E^*_{i+1} A E^*_i A E^*_{i-1} A E^*_{i-2} A E^*_{i-3}.
\end{align*}
By the form of $Z$,
\begin{align*}
a_1 = \theta^*_{i-3} - \theta^*_{i+1} -\beta (\theta^*_{i-2}-\theta^*_i). 
\end{align*}
By Lemma \ref{lem:nz},
\begin{align*}
0 \not=E^*_{i+1} A E^*_i A E^*_{i-1} A E^*_{i-2} A E^*_{i-3}.
\end{align*}
By these comments, $a_1=0$. Therefore \eqref{eq:desire1} holds for Case I.
\medskip

\noindent Case II: $i=d$ or $0 \leq i \leq 2$. By Lemma \ref{prop:rsij}, there exist $a_1, a_2 \in \mathbb F$
such that
\begin{align*}
E^*_{i+1} Z E^*_{i-3} &= a_1 E^*_{i+1} A E^*_{i+2} A \cdots A E^*_{i-4} A E^*_{i-3}   \\
      & \qquad  + a_2 E^*_{i+1} A E^*_{i} A \cdots A E^*_1 A E^*_0 A E^*_d A E^*_{d-1} A \cdots A E^*_{i-2} A E^*_{i-3}, \\
     E^*_{i-3} Z E^*_{i+1} &=  -a_1 E^*_{i-3} A E^*_{i-4} A \cdots A E^*_{i+2} A E^*_{i+1}.
\end{align*}
By the form of $Z$,
\begin{align*}
a_2 = \theta^*_{i-3} - \theta^*_{i+1} -\beta (\theta^*_{i-2}-\theta^*_i). 
\end{align*}
By Lemma \ref{lem:nz}, both
\begin{align*}
0 &\not= E^*_{i+1} A E^*_{i} A \cdots A E^*_1 A E^*_0 A E^*_d A E^*_{d-1} A \cdots A E^*_{i-2} A E^*_{i-3},\\
 0 &\not= E^*_{i-3} A E^*_{i-4} A \cdots A E^*_{i+2} A E^*_{i+1}.
\end{align*}
by these comments, $a_1=0$ and $a_2=0$. Therefore \eqref{eq:desire1} holds for Case II.
\end{proof}


\begin{lemma} \label{lem:6extra}
The scalar $\beta$ from Lemma \ref{lem:lem5} is unique.
\end{lemma}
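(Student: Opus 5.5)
The plan is to derive uniqueness directly from Lemma \ref{lem:lem6a}, using the fact that the scalars $\lbrace \theta^*_i \rbrace_{i=0}^d$ are mutually distinct.

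First, I note that the proof of Lemma \ref{lem:lem6a} uses only one property of $\beta$: that
\begin{align*}
A^4 A^*-A^* A^4 -\beta (A^3 A^* A- A A^* A^3)
\end{align*}
lies in the span of the four elements \eqref{eq:span5}. Everything else in that argument (Lemmas \ref{lem:nz}, \ref{prop:rsij}, the graph $\mathcal D$, the case split) is independent of the choice of $\beta$. So suppose $\beta, \beta' \in \mathbb F$ both satisfy the conclusion of Lemma \ref{lem:lem5}. Running the argument of Lemma \ref{lem:lem6a} for each of them gives, for all $i \in \mathbb Z$,
\begin{align*}
\theta^*_{i-3} - \theta^*_{i+1} = \beta (\theta^*_{i-2}-\theta^*_i), \qquad \qquad
\theta^*_{i-3} - \theta^*_{i+1} = \beta' (\theta^*_{i-2}-\theta^*_i).
\end{align*}

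Subtracting these yields $(\beta-\beta')(\theta^*_{i-2}-\theta^*_i)=0$ for all $i \in \mathbb Z$. Take $i=2$. Then $\theta^*_0 - \theta^*_2 \neq 0$, because $A^*$ is multiplicity-free (Lemma \ref{lem:MF}), so its eigenvalues $\lbrace \theta^*_i \rbrace_{i=0}^d$ are mutually distinct; here $d\geq 4$ ensures $0$ and $2$ are distinct indices in $\mathcal D$. Hence $\beta=\beta'$.

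The only step that needs care is the first: checking that Lemma \ref{lem:lem6a} really holds for every admissible $\beta$, not just one particular choice. Its proof starts with "By Lemma \ref{lem:lem5}, there exist $\gamma_\ell$", which uses exactly the defining span property, so this is a bookkeeping check rather than a genuine obstacle. As a fallback, one could instead argue directly. If $\beta\neq\beta'$, then $A^3A^*A-AA^*A^3$ would lie in the span of \eqref{eq:span5}. Sandwiching between $E^*_{i+1}$ and $E^*_{i-3}$ as in Case I of Lemma \ref{lem:lem6a} would then force $\theta^*_{i-2}-\theta^*_i=0$, which is again a contradiction.
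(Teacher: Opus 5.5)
Your proposal is correct and is essentially the paper's own proof. The paper also deduces uniqueness from Lemma~\ref{lem:lem6a}, whose proof uses only the span property in Lemma~\ref{lem:lem5} and so applies to every admissible $\beta$, together with the mutual distinctness of the $\theta^*_i$; your choice $i=2$, which gives $(\beta-\beta')(\theta^*_0-\theta^*_2)=0$, just spells out that one-line argument.
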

\begin{proof} By Lemma \ref{lem:lem6a} and since $\lbrace \theta^*_i\rbrace_{i=0}^{d}$ are mutually distinct.
\end{proof}

\begin{lemma} \label{lem:betnz}
We have $\beta \not=0$.
\end{lemma}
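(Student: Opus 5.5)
Assume $\beta=0$; we will contradict the fact that $\lbrace \theta^*_i \rbrace_{i=0}^d$ are mutually distinct. The only ingredient needed is Lemma \ref{lem:lem6a}, which holds for every $i \in \mathbb Z$ with the indices of $\theta^*$ interpreted modulo $d+1$ as in \eqref{eq:mod}.

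Setting $\beta=0$ in \eqref{eq:desire1} gives
\begin{align*}
\theta^*_{i-3} = \theta^*_{i+1} \qquad \qquad i \in \mathbb Z.
\end{align*}
Take $i=3$. This yields $\theta^*_0 = \theta^*_4$.

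Since we are in the case $d \geq 4$, the indices $0$ and $4$ both lie in $\lbrace 0,1,\ldots,d\rbrace$. No reduction modulo $d+1$ is needed, and these indices are distinct. So $\theta^*_0$ and $\theta^*_4$ are two different eigenvalues of $A^*$, and they are distinct because $A^*$ is multiplicity-free (Lemma \ref{lem:MF}, or Proposition \ref{prop:PA}(ii)). This contradicts $\theta^*_0=\theta^*_4$, so $\beta \neq 0$.

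There is no real obstacle. The only point to check is that the shift by $4$ is nontrivial modulo $d+1$, and this holds because $d+1 \geq 5$. That is exactly why the assumption $d \geq 4$ matters here; for $d=3$ the case $\beta=0$ genuinely occurs, as shown in the previous section.
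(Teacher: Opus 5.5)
Your proof is correct and is essentially the paper's argument: the paper also sets $\beta=0$ and $i=3$ in Lemma \ref{lem:lem6a} to get $\theta^*_0=\theta^*_4$, which contradicts the mutual distinctness of $\lbrace \theta^*_i\rbrace_{i=0}^d$. Your added check that $4\leq d$ (so no reduction modulo $d+1$ is needed) is the implicit point that makes this work.
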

\begin{proof} Suppose that $\beta=0$. Setting $i=3$ in Lemma \ref{lem:lem6a}, we obtain $\theta^*_0 = \theta^*_4$.
This contradicts the fact that  $\lbrace \theta^*_i \rbrace_{i=0}^d$ are mutually distinct. Therefore, $\beta\not=0$.
\end{proof}

\begin{definition} \label{def:check1} \rm
For $i \in \mathbb Z$ define
\begin{align*}
(\theta^*_i)^\vee = \theta^*_{i-2} - \theta^*_{i+1} - (\beta+1)(\theta^*_{i-1}-\theta^*_i).
\end{align*}
\end{definition}

\begin{lemma}
\label{lem:checksum} We have
\begin{align*}
0 = \sum_{i=0}^d (\theta^*_i)^\vee.
\end{align*}
\end{lemma}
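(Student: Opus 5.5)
This is a telescoping sum over a full period. By the convention above \eqref{eq:mod}, the sequence $\lbrace \theta^*_i \rbrace_{i\in \mathbb Z}$ is periodic with period $d+1$. So for any fixed integer $k$, the sum $\sum_{i=0}^d \theta^*_{i+k}$ runs over one full period and equals $\sum_{i=0}^d \theta^*_i$, whatever the shift $k$.

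Now expand the sum using Definition \ref{def:check1}:
\begin{align*}
\sum_{i=0}^d (\theta^*_i)^\vee = \sum_{i=0}^d \theta^*_{i-2} - \sum_{i=0}^d \theta^*_{i+1} - (\beta+1)\Bigl(\sum_{i=0}^d \theta^*_{i-1} - \sum_{i=0}^d \theta^*_i\Bigr).
\end{align*}
By the periodicity remark, each of the four sums equals $S=\sum_{i=0}^d \theta^*_i$. The right-hand side is therefore
\begin{align*}
S - S - (\beta+1)(S-S) = 0,
\end{align*}
which is the claim.

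There is no real obstacle. The only point needing care is that the indices $i-2$, $i-1$, $i+1$ fall outside $\lbrace 0,\ldots,d\rbrace$ for some $i$. Reducing them modulo $d+1$ still gives a permutation of $\lbrace 0,1,\ldots,d\rbrace$ as $i$ ranges over $0,\ldots,d$, which justifies the reindexing. This uses $d+1\geq 5$ only in that the shifts are well defined; no property of $\beta$ beyond its being a scalar is needed.
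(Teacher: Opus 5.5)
Your proof is correct and is the paper's argument: by the periodicity \eqref{eq:mod}, each of the four shifted sums equals $\sum_{i=0}^d \theta^*_i$, and Definition~\ref{def:check1} then gives zero. One small correction: your remark that the argument uses $d+1\geq 5$ is unnecessary, since reduction modulo $d+1$ permutes $\lbrace 0,1,\ldots,d\rbrace$ under any fixed shift for every $d$.
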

\begin{proof} By \eqref{eq:mod} we have
\begin{align*}
\sum_{i=0}^d \theta^*_{i-2} = \sum_{i=0}^d \theta^*_{i+1} =\sum_{i=0}^d \theta^*_{i-1} =\sum_{i=0}^d \theta^*_{i}.
\end{align*}
The result follows in view of
Definition \ref{def:check1}.
\end{proof}

\begin{lemma} \label{lem:check2}
For $i \in \mathbb Z$,
\begin{align*}
(\theta^*_{i-1})^\vee + (\theta^*_i)^\vee =0.
\end{align*}
\end{lemma}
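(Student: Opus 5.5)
The claim follows by direct expansion of Definition \ref{def:check1} followed by a single application of Lemma \ref{lem:lem6a}. No case analysis is needed, because Definition \ref{def:check1} and Lemma \ref{lem:lem6a} are both stated for all $i \in \mathbb Z$ using the periodic indexing \eqref{eq:mod}.

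First, apply Definition \ref{def:check1} with $i$ replaced by $i-1$:
\begin{align*}
(\theta^*_{i-1})^\vee = \theta^*_{i-3} - \theta^*_{i} - (\beta+1)(\theta^*_{i-2}-\theta^*_{i-1}).
\end{align*}
Next, apply it at $i$ itself:
\begin{align*}
(\theta^*_i)^\vee = \theta^*_{i-2} - \theta^*_{i+1} - (\beta+1)(\theta^*_{i-1}-\theta^*_i).
\end{align*}

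Now add the two expressions. In the $(\beta+1)$ terms, $\theta^*_{i-1}$ cancels, leaving $-(\beta+1)(\theta^*_{i-2}-\theta^*_i)$. The remaining terms regroup as $(\theta^*_{i-3}-\theta^*_{i+1}) + (\theta^*_{i-2}-\theta^*_i)$. Hence
\begin{align*}
(\theta^*_{i-1})^\vee + (\theta^*_i)^\vee
&= (\theta^*_{i-3}-\theta^*_{i+1}) + (\theta^*_{i-2}-\theta^*_i) - (\beta+1)(\theta^*_{i-2}-\theta^*_i) \\
&= \theta^*_{i-3}-\theta^*_{i+1} - \beta(\theta^*_{i-2}-\theta^*_i).
\end{align*}
By Lemma \ref{lem:lem6a}, the last expression is zero for every $i \in \mathbb Z$, which proves the claim.

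There is no real obstacle. The only point that needs care is the bookkeeping of the shifted indices, so that the cancellation of $\theta^*_{i-1}$ and the regrouping into the combination appearing in \eqref{eq:desire1} come out exactly.
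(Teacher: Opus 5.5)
Your proposal is correct and is essentially the paper's proof: expand $(\theta^*_{i-1})^\vee$ and $(\theta^*_i)^\vee$ via Definition \ref{def:check1}, sum them to get $\theta^*_{i-3}-\theta^*_{i+1}-\beta(\theta^*_{i-2}-\theta^*_i)$, and apply Lemma \ref{lem:lem6a}. Your index bookkeeping and the cancellation of $\theta^*_{i-1}$ both check out.
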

\begin{proof} To verify this equation, eliminate $(\theta^*_{i-1})^\vee$ and $(\theta^*_i)^\vee$ using Definition \ref{def:check1}, and evaluate the result using
Lemma \ref{lem:lem6a}.
\end{proof}

\begin{lemma} \label{lem:check3}
There exists $\xi \in \mathbb F$ such that 
\begin{align*}
(\theta^*_i)^\vee = (-1)^i \xi \qquad \qquad (i \in \mathbb Z).
\end{align*}
\end{lemma}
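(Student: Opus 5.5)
Lemma \ref{lem:check2} gives $(\theta^*_i)^\vee = -(\theta^*_{i-1})^\vee$ for all $i \in \mathbb Z$, so I would induct in both directions from $i=0$. Define $\xi = (\theta^*_0)^\vee$. For $i \geq 1$, induction on $i$ using Lemma \ref{lem:check2} gives $(\theta^*_i)^\vee = -(\theta^*_{i-1})^\vee = -(-1)^{i-1}\xi = (-1)^i \xi$. For $i<0$, rewrite Lemma \ref{lem:check2} as $(\theta^*_{i-1})^\vee = -(\theta^*_i)^\vee$ and induct downward in the same way. This yields $(\theta^*_i)^\vee = (-1)^i\xi$ for all $i \in \mathbb Z$.

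The one point needing care is consistency with the periodicity \eqref{eq:mod}. By Definition \ref{def:check1} and \eqref{eq:mod}, we have $(\theta^*_{i+d+1})^\vee = (\theta^*_i)^\vee$. The sign pattern $(-1)^i$ is compatible with this only if $d+1$ is even or $\xi=0$. However, the inductive argument never uses periodicity. It only applies Lemma \ref{lem:check2} at every integer, and that lemma is already stated for all $i \in \mathbb Z$. So the formula holds as stated with no case split. Any parity consequence, such as forcing $\xi=0$ when $d$ is even, I would leave for later; it could also be read off by combining with Lemma \ref{lem:checksum}.

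There is no real obstacle here. The only thing to verify is that Lemma \ref{lem:check2} holds for every integer $i$, including indices that wrap around modulo $d+1$. This is guaranteed because Lemma \ref{lem:lem6a} holds for all $i \in \mathbb Z$.
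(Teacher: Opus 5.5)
Your proposal is correct and is the paper's argument: the paper's proof is the single line ``By Lemma \ref{lem:check2}'', and your two-directional induction from $\xi = (\theta^*_0)^\vee$ just writes that line out. Your periodicity remark is not needed, and in characteristic $2$ it gives no information because $(-1)^i = 1$ there; the paper obtains the parity of $d$ later from Lemma \ref{lem:checksum}.
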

\begin{proof} By Lemma \ref{lem:check2}.
\end{proof}

\begin{lemma} \label{lem:check4} We have $\xi \not=0$.
\end{lemma}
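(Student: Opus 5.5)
We argue by contradiction. Assume $\xi=0$. By Lemma \ref{lem:check3} and Definition \ref{def:check1},
\begin{align*}
\theta^*_{i-2}-\theta^*_{i+1} = (\beta+1)(\theta^*_{i-1}-\theta^*_i) \qquad \qquad (i \in \mathbb Z).
\end{align*}
So the doubly infinite sequence $\lbrace \theta^*_i\rbrace_{i\in \mathbb Z}$ is $\beta$-recurrent and periodic with period $d+1$. Its terms $\theta^*_0,\ldots,\theta^*_d$ are mutually distinct, and $d+1\geq 5$. The plan is to contradict the dual of Lemma \ref{lem:lem3}, namely $\Psi^*(2,1)\neq 0$. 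Applying Lemma \ref{lem:lem3} to $\Phi^*$ is legitimate: $\Phi^*$ is circular by Lemma \ref{lem:dualC}, and the conditions of Proposition \ref{prop:sum} are symmetric under duality, so our standing assumptions hold for $\Phi^*$. By the formula for $\Psi$ in Lemma \ref{lem:lem2}, and using \eqref{eq:mod},
\begin{align*}
\Psi^*(2,1) = -\sum_{i=0}^{d} f(\theta^*_i,\theta^*_{i+1}), \qquad \qquad f(x,y)=x^2y-xy^2,
\end{align*}
which is a cyclic sum over $\mathbb Z/(d+1)\mathbb Z$. We show that this cyclic sum vanishes.

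\textbf{Step 1 (translation invariance).} We have
\begin{align*}
f(x+a,y+a)-f(x,y) = a(x^2-y^2)+a^2(x-y).
\end{align*}
Summed cyclically over consecutive pairs, each of these terms telescopes to $0$. Hence we may shift the sequence $\theta^*$ by any constant.

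\textbf{Step 2 (closed form for $\theta^*$).} We use the standard solution of the $\beta$-recurrence from \cite[Section~8]{LS99}, working over the algebraic closure if necessary. In the generic case $\beta=q+q^{-1}$ with $q^2\neq 1$, we have $\theta^*_i=a+bq^i+cq^{-i}$. Periodicity in $i$, together with distinctness, forces $q^{d+1}=1$ and $q\neq 1$. After the shift in Step 1 we may take $a=0$. Expanding $f(bq^i+cq^{-i},\,bq^{i+1}+cq^{-i-1})$ gives a combination of the terms $q^{\pm 3i}$ and $q^{\pm i}$. The cyclic sum of $q^{ki}$ over $0\le i\le d$ is zero whenever $q^k\neq 1$. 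Now $q^{\pm1}\neq 1$. If $q^3=1$, then $\theta^*_0=\theta^*_3$, contradicting distinctness since $d\geq 4$. So all four coefficient sums vanish and $\Psi^*(2,1)=0$.

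\textbf{Step 3 (degenerate cases).} These are $\beta=2$ (where $\theta^*_i$ is a polynomial of degree at most $2$ in $i$) and $\beta=-2$ (an alternating-polynomial form). I would show that periodicity with at least five distinct values either is impossible, or puts us in a characteristic-$p$ situation where the same telescoping and character-sum argument again gives $\Psi^*(2,1)=0$.

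The main obstacle is Step 3. Periodic polynomial sequences can exist in positive characteristic, so a direct cyclic computation of $\sum f$ using the explicit form will likely be needed there. As a fallback, one could compute $\sum f$ directly from the recurrence: write $f(\theta^*_i,\theta^*_{i+1})$ and use the recurrence to show that the summand is a coboundary $g_{i+1}-g_i$ for some explicit cubic expression $g_i$ in $\theta^*_{i-1},\theta^*_i,\theta^*_{i+1}$. This would settle all cases uniformly. Either way, $\Psi^*(2,1)=0$ contradicts Lemma \ref{lem:lem3} applied to $\Phi^*$, so $\xi\neq0$.
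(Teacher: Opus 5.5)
Your proposal has a genuine gap. Much of it is sound: $\xi=0$ gives the cyclic $\beta$-recurrence, and the formula $\Psi^*(2,1)=-\sum_{i=0}^{d}f(\theta^*_i,\theta^*_{i+1})$ is correct. Lemma \ref{lem:lem3} does transfer to $\Phi^*$. Step 2 correctly disposes of $\beta\neq\pm 2$, since the expansion indeed has only $q^{\pm i}$ and $q^{\pm 3i}$ terms.

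The gap is Step 3, which you only announce. These cases are not excluded by periodicity and distinctness; for example, $\beta=2$, ${\rm Char}(\mathbb F)=d+1$, $\theta^*_i=i$ is allowed. Your claim that ``the same character-sum argument'' applies is also not accurate as stated. For $\beta=2$, $d=4$, after shifting one has $\theta^*_i=bi+ci^2$, and the summand is a quintic in $i\in\mathbb F_5$. Since $\sum_{i\in\mathbb F_5}i^4=-1\neq 0$, the sum vanishes only because the $i^4$-coefficient happens to be $-5c^2(b+c)$. The case $\beta=-2$, $d=5$, ${\rm Char}(\mathbb F)=3$ likewise needs an extra check. Your coboundary fallback is asserted but not exhibited, and it cannot be literally uniform. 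At $\beta=-1$ a $3$-periodic sequence $u,v,w$ has cyclic sum $f(u,v)+f(v,w)+f(w,u)=-(u-v)(v-w)(w-u)$, so any such $g_i$ must involve division by $\beta+1$.

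The missing idea is the conserved quadratic that the paper extracts. From the recurrence, $\theta^*_{i-2}-\beta\theta^*_{i-1}+\theta^*_i=\theta^*_{i-1}-\beta\theta^*_i+\theta^*_{i+1}$. So there is $\gamma^*$ with $\theta^*_{i-1}-\beta\theta^*_i+\theta^*_{i+1}=\gamma^*$ for all $i\in\mathbb Z$. Multiplying by $\theta^*_{i-1}-\theta^*_{i+1}$ then gives $\varrho^*$ with $P(\theta^*_{i-1},\theta^*_i)=0$ for all $i$, where $P(\lambda,\mu)=\lambda^2-\beta\lambda\mu+\mu^2-\gamma^*(\lambda+\mu)-\varrho^*$. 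One then has the identity
\[
(\beta+1)f(x,y)=-(y^3-x^3)+\gamma^*(y^2-x^2)+\varrho^*(y-x)+(y-x)P(x,y).
\]
So your cyclic sum telescopes to $(\beta+1)\Psi^*(2,1)=0$ in every characteristic. This is exactly the coboundary you were looking for, namely $g_i=(\beta+1)^{-1}\bigl(-\theta^{*3}_i+\gamma^*\theta^{*2}_i+\varrho^*\theta^*_i\bigr)$. The case $\beta=-1$ is impossible, since then $\theta^*_{i-2}=\theta^*_{i+1}$ gives $\theta^*_0=\theta^*_3$. That completes your route with no closed forms and no case split.

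The paper's argument is shorter still. With $P$ in hand, it verifies \eqref{eq:CTD2} directly. For the commutator $C$ on the right of \eqref{eq:CTD2} one has $E^*_iCE^*_j=E^*_iAE^*_j(\theta^*_i-\theta^*_j)P(\theta^*_i,\theta^*_j)$. This vanishes for all $i,j$ by the circular shape of $\mathcal D$, which contradicts the standing assumption directly, without Lemma \ref{lem:lem3}.
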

\begin{proof} We assume $\xi =0$ and get a contradiction. By Definition \ref{def:check1},
\begin{align*}
 \theta^*_{i-2} - \theta^*_{i+1} = (\beta+1)(\theta^*_{i-1}-\theta^*_i) \qquad \qquad (i \in \mathbb Z).
 \end{align*}
 In this equation we rearrange terms to find that for  $i \in \mathbb Z$,
 \begin{align*}
 \theta^*_{i-2}-\beta \theta^*_{i-1} + \theta^*_i = \theta^*_{i-1} - \beta \theta^*_i + \theta^*_{i+1}.
 \end{align*}
 Therefore, there exists $\gamma^* \in \mathbb F$ such that
 \begin{align*}
 \theta^*_{i-1} - \beta \theta^*_i + \theta^*_{i+1} = \gamma^* \qquad \qquad (i \in \mathbb Z).
 \end{align*}
 For $i \in \mathbb Z$ we have
 \begin{align*}
 0 &= (\theta^*_{i-1}- \theta^*_{i+1})(\theta^*_{i-1} - \beta \theta^*_i + \theta^*_{i+1} - \gamma^*) \\
 &=   \theta^{*2}_{i-1}- \beta \theta^*_{i-1} \theta^*_i + \theta^{*2}_i - \gamma^*(\theta^*_{i-1} + \theta^*_i) \\
      &\qquad \qquad -\Bigl(   \theta^{*2}_{i}- \beta \theta^*_{i} \theta^*_{i+1} + \theta^{*2}_{i+1} - \gamma^*(\theta^*_{i} + \theta^*_{i+1})\Bigr).
 \end{align*}
 Therefore,
 there exists $\varrho^* \in \mathbb F$ such that
 \begin{align*}
 \theta^{*2}_{i-1}- \beta \theta^*_{i-1} \theta^*_i + \theta^{*2}_i - \gamma^*(\theta^*_{i-1} + \theta^*_i) = \varrho^* \qquad \qquad (i \in \mathbb Z).
 \end{align*}
 Define a polynomial $P \in \mathbb F\lbrack \lambda, \mu \rbrack$ by
 \begin{align*}
 P(\lambda, \mu) = \lambda^2 - \beta \lambda \mu  + \mu^2- \gamma^*(\lambda+ \mu) -\varrho^*.
 \end{align*}
 Note that $P(\lambda,\mu)= P(\mu, \lambda)$. By construction,
 \begin{align*}
 P(\theta^*_{i-1}, \theta^*_i)=0 \qquad \qquad (i \in \mathbb Z).
 \end{align*}
 We show
 \begin{align}
 0 &= \lbrack A^*, A^{*2} A-\beta A^* A A^* + A A^{*2} - \gamma^*(A^*A+A A^*) -\varrho^* A \rbrack.  \label{eq:TD2again}
 \end{align}
Let $C$ denote the right-hand side of \eqref{eq:TD2again}. We show $C=0$. We have
\begin{align*}
C = \sum_{i=0}^d \sum_{j=0}^d E^*_i C E^*_j.
\end{align*}
For $0 \leq i,j\leq d$ we evaluate  $E^*_i C E^*_j$ using $E^*_i A^*= \theta^*_i E^*_i$ and $A^* E^*_j = \theta^*_j E^*_j$.
This yields
\begin{align*}
E^*_i C E^*_j &= E^*_i A E^*_j (\theta^*_i - \theta^*_j)(\theta_i^{*2} - \beta \theta^*_i \theta^*_j + \theta_j^{*2} - \gamma^* (\theta^*_i + \theta^*_j) -\varrho^*) \\
& =E^*_i A E^*_j (\theta^*_i - \theta^*_j)P(\theta^*_i, \theta^*_j).
\end{align*} 
If $1 < \vert i-j\vert <d$ then $E^*_i A E^*_j=0$.
If $\vert i-j \vert \in \lbrace 1,d\rbrace$  then $P(\theta^*_i, \theta^*_j)=0$.
If $i=j$ then of course $\theta^*_i - \theta^*_j=0$.
In any case, $E^*_i C E^*_j=0$. By these comments, $C=0$. We have established \eqref{eq:TD2again}. 
The scalars $\beta, \gamma^*, \varrho^*$ satisfy Proposition \ref{prop:sum}(iii). 
This contradicts our assumption that the
equivalent conditions in Proposition \ref{prop:sum} do not hold for $\beta$. Therefore, $\xi \not=0$.
\end{proof}

\begin{lemma} \label{lem:dodd} 
The integer $d$ is odd.
\end{lemma}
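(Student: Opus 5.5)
By Lemma \ref{lem:check3}, $(\theta^*_i)^\vee = (-1)^i \xi$ for all $i \in \mathbb Z$. By \eqref{eq:mod}, the sequence $\lbrace \theta^*_i\rbrace_{i \in \mathbb Z}$ is periodic with period $d+1$. Since $(\theta^*_i)^\vee$ is built from $\theta^*_{i-2},\theta^*_{i-1},\theta^*_i,\theta^*_{i+1}$ in Definition \ref{def:check1}, the sequence $\lbrace (\theta^*_i)^\vee\rbrace_{i\in \mathbb Z}$ is also $(d+1)$-periodic. The plan is to compare these two descriptions and conclude that $d$ must be odd.

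Suppose, towards a contradiction, that $d$ is even, so that $d+1$ is odd. From periodicity,
\begin{align*}
(-1)^0 \xi = (\theta^*_0)^\vee = (\theta^*_{d+1})^\vee = (-1)^{d+1}\xi = -\xi.
\end{align*}
Hence $2\xi = 0$. If ${\rm char}(\mathbb F)\neq 2$, this gives $\xi=0$, contradicting Lemma \ref{lem:check4}. So in that case $d$ is odd.

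A second route avoids division by $2$. Apply Lemma \ref{lem:checksum}: $0=\sum_{i=0}^d (\theta^*_i)^\vee = \xi\sum_{i=0}^d (-1)^i$. When $d$ is even this sum equals $\xi$, so $\xi=0$, again contradicting Lemma \ref{lem:check4}. This argument works in every characteristic, so it is the one I would present. The first computation can be kept as a consistency check or dropped.

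The only delicate point is to make sure Lemma \ref{lem:checksum} is applied with the correct index range $0\le i\le d$, and that $\sum_{i=0}^d(-1)^i=1$ for $d$ even. Both are immediate, so I expect no real obstacle; the substance was already done in Lemmas \ref{lem:check3} and \ref{lem:check4}.
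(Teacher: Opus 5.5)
Your second route is exactly the paper's proof. Lemmas \ref{lem:checksum}, \ref{lem:check3}, \ref{lem:check4} give $\xi\sum_{i=0}^d(-1)^i=0$ with $\xi\neq 0$, and this is impossible for $d$ even because the integer sum is then $1$, in every characteristic. You are right to present that version rather than the periodicity comparison, which only yields $2\xi=0$ and says nothing when ${\rm Char}(\mathbb F)=2$.
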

\begin{proof} By Lemmas \ref{lem:checksum}, \ref{lem:check3}, \ref{lem:check4}  we have
\begin{align*}
 \sum_{i=0}^d (-1)^i = \xi^{-1} \sum_{i=0}^d (\theta^*_i)^\vee =0.
\end{align*}
Consequently, $d$ is odd.
\end{proof}

\noindent Let $\overline{\mathbb F}$ denote the algebraic closure of $\mathbb F$.

\begin{lemma} \label{lem:lem7} We express the scalars $\lbrace \theta^*_i \rbrace_{i=0}^d$ in closed form.
\begin{enumerate}
\item[\rm (i)] 
Assume that 
${\rm Char}(\mathbb F) \not=2$ and $\beta \not\in \lbrace 2,-2\rbrace$.
  Write $\beta = q + q^{-1}$ with $q \in \overline{\mathbb F}$.
Then 
\begin{align}
 \theta^*_i = \alpha^*_0 + \alpha^*_1 q^i + \alpha^*_2 q^{-i} + \frac{1}{2} \,\frac{ \xi}{\beta+2}  (-1)^i \qquad \qquad (0 \leq i \leq d). \label{eq:thForm}
\end{align}
Moreover, $q^{d+1}=1$. 
\item[\rm (ii)] Assume that  ${\rm Char}(\mathbb F) \not=2$ and $\beta =2$.
Then 
\begin{align}
\theta^*_i = \alpha^*_0 + \alpha^*_1 i + \alpha^*_2 i^2+ \frac{\xi}{8} (-1)^i \qquad \qquad (0 \leq i \leq d). \label{eq:thForm2}
\end{align}
Moreover, ${\rm Char}(\mathbb F)$ divides $d+1$. 
\item[\rm (iii)] Assume that  ${\rm Char}(\mathbb F) \not=2$ and $\beta =-2$.
Then 
\begin{align}
 \theta^*_i = \alpha^*_0 + \alpha^*_1 (-1)^i + \alpha^*_2 (-1)^i i + \frac{\xi}{4} (-1)^i i^2\qquad \qquad (0 \leq i \leq d). \label{eq:thFormm2}
\end{align}
Moreover, ${\rm Char}(\mathbb F)$ divides $d+1$. 
\item[\rm (iv)] Assume that ${\rm Char}(\mathbb F) =2$
and $\beta \not=2$. 
 Write $\beta = q + q^{-1}$ with $q \in \overline{\mathbb F}$.
Then 
\begin{align}
\theta^*_i = \alpha^*_0 + \alpha^*_1 q^i + \alpha^*_2 q^{-i} +\frac{\xi}{\beta-2} i\qquad \qquad (0 \leq i \leq d). \label{eq:thFormc}
\end{align}
Moreover, $q^{d+1}=1$. 
\item[\rm (v)] Assume that ${\rm Char}(\mathbb F)=2$ and $\beta=2$. Then $\Phi$ does not exist.
\end{enumerate}
\end{lemma}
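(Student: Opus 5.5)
The plan is to solve the linear recurrence coming from Definition \ref{def:check1} and Lemma \ref{lem:check3}. For all $i \in \mathbb Z$ we have
\begin{align*}
\theta^*_{i-2} - (\beta+2)\theta^*_{i-1} + (\beta+2)\theta^*_i - \theta^*_{i+1} = (-1)^i \xi .
\end{align*}
This is a third-order inhomogeneous linear recurrence. Its homogeneous characteristic polynomial, in the variable $x$ and after shifting indices, is
\begin{align*}
-x^3+(\beta+2)x^2-(\beta+2)x+1=-(x-1)(x^2-\beta x+1).
\end{align*}
So the roots are $1, q, q^{-1}$ with $\beta=q+q^{-1}$. The forcing term $(-1)^i\xi$ corresponds to the root $-1$.

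The first step is to write the general solution in each case. The homogeneous part is $\alpha^*_0+\alpha^*_1q^i+\alpha^*_2q^{-i}$ when $1,q,q^{-1}$ are distinct. It is $\alpha^*_0+\alpha^*_1 i+\alpha^*_2 i^2$ when $\beta=2$ (triple root $1$). It is $\alpha^*_0+(\alpha^*_1+\alpha^*_2 i)(-1)^i$ when $\beta=-2$ (double root $-1$).

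The second step is to find a particular solution for the forcing term.
\begin{itemize}
\item[(i)] If $-1$ is not a root, try $c(-1)^i$. Substituting gives $c(-1)^i(1+2(\beta+2)+\dots)$; the bookkeeping yields $c\cdot 2(\beta+2)=\xi$, i.e.\ $c=\xi/(2(\beta+2))$, which matches \eqref{eq:thForm}.
\item[(ii)] For $\beta=2$, $-1$ is still not a root. Then $\beta+2=4$ and the constant becomes $\xi/8$.
\item[(iii)] For $\beta=-2$, $-1$ is a double root of the homogeneous part, so try $c(-1)^i i^2$ and compute $c=\xi/4$.
\item[(iv)] In characteristic $2$ we have $-1=1$. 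The forcing is then the constant $\xi$, and $1$ is a root, so try $ci$. Since $(x-1)$ is a simple factor when $\beta\neq 2$, this gives $c=\xi/(\beta-2)$ up to sign conventions (in characteristic $2$, $\beta-2=\beta$).
\item[(v)] In characteristic $2$ with $\beta=2$, we get $\beta=0$. This contradicts Lemma \ref{lem:betnz}, so $\Phi$ does not exist.
\end{itemize}
Each constant is verified by direct substitution.

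The third step is to derive the periodicity statements from \eqref{eq:mod}. The closed form holds for all $i\in\mathbb Z$, not only for $0\le i\le d$, and $d+1$ is even by Lemma \ref{lem:dodd}, so the $(-1)^i$ terms are $(d+1)$-periodic. Subtracting the formula at $i$ and at $i+d+1$ gives, in case (i),
\begin{align*}
\alpha^*_1 q^i(q^{d+1}-1)+\alpha^*_2 q^{-i}(q^{-d-1}-1)=0 \qquad (i\in\mathbb Z).
\end{align*}
Since $q\neq q^{-1}$, the functions $q^i$ and $q^{-i}$ are independent, so $\alpha^*_1(q^{d+1}-1)=0$ and $\alpha^*_2(q^{-d-1}-1)=0$.

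This step is the main obstacle: we need $\alpha^*_1$ or $\alpha^*_2$ to be nonzero. If both were $0$, then $\theta^*_i$ would take only the two values $\alpha^*_0\pm c$. That contradicts the mutual distinctness of $\theta^*_0,\theta^*_1,\theta^*_2$ (recall $d\ge 4$). Hence $q^{d+1}=1$. Case (iv) is handled the same way, with the term $\frac{\xi}{\beta-2}i$ contributing $\frac{\xi}{\beta-2}(d+1)$. In characteristic $2$ with $d$ odd, $d+1\equiv 0$, so this term is harmless.

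For case (ii), periodicity gives
\begin{align*}
\alpha^*_1(d+1)+\alpha^*_2\bigl(2i(d+1)+(d+1)^2\bigr)=0 \qquad (i\in\mathbb Z).
\end{align*}
Suppose ${\rm Char}(\mathbb F)$ does not divide $d+1$. The coefficient of $i$ forces $\alpha^*_2=0$, and then $\alpha^*_1=0$. The $\theta^*_i$ would again take at most two values, which is a contradiction. Case (iii) is analogous: the coefficients of $(-1)^i i$ and $(-1)^i i^2$ yield $\xi(d+1)=0$ and $\alpha^*_2(d+1)=0$. Since $\xi\neq 0$ by Lemma \ref{lem:check4}, ${\rm Char}(\mathbb F)$ divides $d+1$.
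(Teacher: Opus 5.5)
The idea is right, but the recurrence you display has the wrong coefficient. By Definition~\ref{def:check1}, Lemma~\ref{lem:check3} says
$\theta^*_{i-2}-(\beta+1)\theta^*_{i-1}+(\beta+1)\theta^*_i-\theta^*_{i+1}=(-1)^i\xi$.
The coefficient is $\beta+1$, not $\beta+2$. As written, your factorization is false, since $-(x-1)(x^2-\beta x+1)=-x^3+(\beta+1)x^2-(\beta+1)x+1$.

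Taken literally, your recurrence gives wrong results in three cases:
in (i), substituting $c(-1)^i$ gives $c(2\beta+6)(-1)^i$;
in (iv), substituting $ci$ gives $c(\beta-1)$;
at $\beta=-2$, it collapses to $\theta^*_{i-2}-\theta^*_{i+1}=(-1)^i\xi$, with characteristic polynomial $1-x^3$, and case (iii) falls apart.

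With $\beta+1$, every constant you state checks out:
$c(-1)^i$ yields $c(-1)^i\bigl(2+2(\beta+1)\bigr)=2(\beta+2)c(-1)^i$;
at $\beta=-2$, $c(-1)^i i^2$ yields $4c(-1)^i$;
$ci$ yields $c(\beta-2)$.
So replace $\beta+2$ by $\beta+1$ and write out these substitutions instead of appealing to ``bookkeeping.''

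With that correction your proof is complete, and it is a mild variant of the paper's. The paper solves the homogeneous fourth-order recurrence of Lemma~\ref{lem:lem6a}, with characteristic polynomial $(\lambda-1)(\lambda+1)(\lambda-q)(\lambda-q^{-1})$. It then uses Definition~\ref{def:check1} and Lemma~\ref{lem:check3} to identify the coefficient of $(-1)^i$ as $\xi/(2(\beta+2))$.

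You instead solve the third-order inhomogeneous recurrence directly. The paper's fourth-order recurrence is yours with the annihilator of $(-1)^i$ applied (compare Lemma~\ref{lem:check2}). Your route gives the forcing coefficient in one step, and the cases $\beta=-2$ and characteristic $2$ show up as resonance.

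Your handling of the ``moreover'' clauses is more explicit than the paper's one-line appeal to \eqref{eq:mod}, and it is correct:
you note that periodicity forces $q^{d+1}=1$ only if $\alpha^*_1$ or $\alpha^*_2$ is nonzero;
you rule out $\alpha^*_1=\alpha^*_2=0$ via $\theta^*_0\neq\theta^*_2$ (in (iv) this works because $ci$ depends only on the parity of $i$ in characteristic $2$);
in (iii) you correctly use $\xi\neq 0$ from Lemma~\ref{lem:check4} to get $d+1=0$ in $\mathbb F$.
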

\begin{proof} (i) For the linear recurrence in Lemma \ref{lem:lem6a}, the characteristic polynomial is
\begin{align*}
 \lambda^4-1-\beta (\lambda^3-\lambda) = (\lambda-1)(\lambda+1)(\lambda-q)(\lambda-q^{-1}).
 \end{align*}
 For this polynomial the roots $1,-1,q,q^{-1}$ are mutually distinct. Therefore, the linear recurrence has general solution
 \begin{align*}
  \theta^*_i = \alpha^*_0 + \alpha^*_1 q^i + \alpha^*_2 q^{-i} + \alpha^*_3 (-1)^i \qquad \qquad (i \in \mathbb Z). 
 \end{align*}
 By this formula along with Definition \ref{def:check1} and Lemma \ref{lem:check3},
 \begin{align*}
 \xi = \frac{(\theta^*_i)^\vee}{(-1)^i} = \frac{\theta^*_{i-2} - \theta^*_{i+1} - (\beta+1)(\theta^*_{i-1}-\theta^*_i)}{(-1)^i} = 2(\beta+2) \alpha^*_3
 \end{align*} 
 for $i \in \mathbb Z$.
 By these comments, we obtain \eqref{eq:thForm}. The remaining assertion follows
 from  \eqref{eq:mod}.
 \\
 \noindent (ii)--(iv) Similar. \\
 \noindent (v) We have $\beta = 2 =0$, which contradicts Lemma \ref{lem:betnz}. 
 \end{proof}
\noindent In Lemma  \ref{lem:lem5} we obtained a linear dependency among five polynomials in $A, A^*$. This linear dependency was used in Lemma \ref{lem:lem6a} to get a linear recurrence involving
the eigenvalues $\lbrace \theta^*_i \rbrace_{i=0}^d$. Our next general goal is to use this linear dependency to get some information about the eigenvalues $\lbrace \theta_i \rbrace_{i=0}^d$.

\begin{definition} \label{alt:bbT}
\rm Define the $(d+1) \times 5 $  matrix     
\begin{align*}
{\mathbb T} =\begin{pmatrix}
          \theta_0-\theta_1 & \theta_0^2-\theta_1^2& \theta_0^3-\theta_1^3& \theta^2_0 \theta_1 -\theta_0\theta_1^2&  \theta_0^4 - \theta_1^4-\beta(\theta^3_0 \theta_1 -\theta_0 \theta_1^3) \\
           \theta_1-\theta_2 & \theta_1^2-\theta_2^2& \theta_1^3-\theta_2^3& \theta_1^2 \theta_2 -\theta_1 \theta_2^2&     \theta_1^4 - \theta_2^4     -\beta    ( \theta^3_1 \theta_2-\theta_1 \theta_2^3) \\
            \theta_2-\theta_3 & \theta_2^2-\theta_3^2& \theta_2^3-\theta_3^3& \theta_2^2 \theta_3-\theta_2 \theta_3^2 &  \theta_2^4 - \theta_3^4  -\beta ( \theta_2^3 \theta_3-\theta_2 \theta_3^3) \\
            \vdots &\vdots&\vdots&\vdots&\vdots \\
             \theta_{d-1}-\theta_d & \theta_{d-1}^2-\theta_d^2& \theta_{d-1}^3-\theta_d^3& \theta_{d-1}^2 \theta_d-\theta_{d-1} \theta_d^2&   \theta_{d-1}^4 - \theta_d^4   -\beta (      \theta_{d-1}^3 \theta_d-\theta_{d-1}\theta_d^3)  \\
              \theta_d-\theta_0 & \theta_d^2-\theta_0^2& \theta_d^3-\theta_0^3& \theta_d^2 \theta_0-\theta_d\theta_0^2 &   \theta_d^4 - \theta_0^4      -\beta (      \theta_d^3 \theta_0-\theta_d \theta_0^3 )
          \end{pmatrix}.
\end{align*}
\end{definition}

\begin{lemma} \label{alt:dep} For the matrix $\mathbb T$, the rightmost column is contained in the span of the first four columns.
\end{lemma}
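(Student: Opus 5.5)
We will derive the column dependency from the operator identity in Lemma \ref{lem:lem5}, now examined in the eigenbasis of $A$ rather than that of $A^*$. By Lemma \ref{lem:lem5} there exist $\gamma_1,\gamma_2,\gamma_3,\gamma_4\in\mathbb F$ such that
\begin{align*}
Z := A^4A^*-A^*A^4-\beta(A^3A^*A-AA^*A^3) - \gamma_1(AA^*-A^*A) - \gamma_2(A^2A^*-A^*A^2) - \gamma_3(A^3A^*-A^*A^3) - \gamma_4(A^2A^*A-AA^*A^2)
\end{align*}
is zero. We will show that, for each row index, the corresponding row of $\mathbb T$ times the vector $(-\gamma_1,-\gamma_2,-\gamma_3,-\gamma_4,1)^{\rm t}$ vanishes. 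This gives the claim, since the rightmost column then equals $\gamma_1$ times column 1 plus $\gamma_2$ times column 2 plus $\gamma_3$ times column 3 plus $\gamma_4$ times column 4.

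The key step is to sandwich $Z$ between primitive idempotents of $A$. For $0\le i,j\le d$, using $E_iA=\theta_iE_i$ and $AE_j=\theta_jE_j$, every term of $Z$ has the form $A^rA^*A^s-A^sA^*A^r$, and
\begin{align*}
E_i(A^rA^*A^s-A^sA^*A^r)E_j = (\theta_i^r\theta_j^s-\theta_i^s\theta_j^r)E_iA^*E_j .
\end{align*}
Hence $0=E_iZE_j = F(\theta_i,\theta_j)\,E_iA^*E_j$, where
\begin{align*}
F(\lambda,\mu) = \lambda^4-\mu^4-\beta(\lambda^3\mu-\lambda\mu^3) - \gamma_1(\lambda-\mu) - \gamma_2(\lambda^2-\mu^2) - \gamma_3(\lambda^3-\mu^3) - \gamma_4(\lambda^2\mu-\lambda\mu^2).
\end{align*}
The entries of row $i$ of $\mathbb T$ (for $0\le i\le d-1$) are precisely the five pieces of $F(\theta_i,\theta_{i+1})$, and the last row corresponds to $F(\theta_d,\theta_0)$.

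It remains to know that the relevant sandwiches $E_iA^*E_j$ are nonzero. For $0\le i\le d-1$, Definition \ref{def:HS}(iv) gives $E_{i+1}A^*E_i\ne0$. We therefore use $(i,j)\to(i+1,i)$, which yields $F(\theta_{i+1},\theta_i)=0$. Since $F(\mu,\lambda)=-F(\lambda,\mu)$ (every term is antisymmetric), this gives $F(\theta_i,\theta_{i+1})=0$. For the last row, circularity (Definition \ref{def:circ}(i)) gives $E_0A^*E_d\ne0$, so $F(\theta_0,\theta_d)=0$, and by antisymmetry $F(\theta_d,\theta_0)=0$.

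There is no real obstacle here. The only points needing care are the antisymmetry bookkeeping, so that each row's sign convention matches, and the check that the nonvanishing sandwiches exist for every consecutive pair and for the wrap-around pair $(0,d)$. The latter is exactly where the circular hypothesis enters.
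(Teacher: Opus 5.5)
Your proposal is correct and follows the paper's proof. You sandwich $Z$ between $E_i$ and $E_j$, use $E_{i+1}A^*E_i\neq 0$ together with the circular condition $E_0A^*E_d\neq 0$, and read off the rows of $\mathbb T$. Your vector $(-\gamma_1,-\gamma_2,-\gamma_3,-\gamma_4,1)^{\rm t}$ is just the negative of the paper's $\psi$, and you state explicitly the antisymmetry $F(\mu,\lambda)=-F(\lambda,\mu)$ that the paper uses tacitly when it writes $\mathbb T\psi=(c_{1,0},\ldots,c_{d,d-1},c_{0,d})^{\rm t}$.
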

\begin{proof} By Lemma \ref{lem:lem5}, there exists $ \gamma_\ell \in  \mathbb F$ $(1 \leq \ell \leq 4)$ such that
\begin{align*}
&A^4 A^*-A^* A^4 -\beta (A^3 A^* A- A A^* A^3) \\
&\qquad = \gamma_1 (A A^*-A^*A) + \gamma_2 (A^2 A^*-A^*A^2) + \gamma_3 (A^3 A^*-A^*A^3) + \gamma_4 (A^2 A^* A-A A^* A^2).
\end{align*}
We will show that $\mathbb T \psi =0$, where $\psi = (\gamma_1, \gamma_2, \gamma_3, \gamma_4, -1)^{\rm t}$.
In the previously displayed equation, let $Z$ denote the left-hand side minus the right-hand side. We have $Z=0$, so
\begin{align*}
 E_i Z E_j =0 \qquad \qquad (0 \leq i,j\leq d).
 \end{align*}
For $0 \leq i,j\leq d$ we evaluate $E_i Z E_j$ using $E_i A = \theta_i E_i$ and $A E_j = \theta_j E_j$; this yields
\begin{align}
0 = E_i Z E_j = E_i A^* E_j c_{i,j} \label{eq:triplep}
\end{align}
where
\begin{align*}
c_{i,j}  &=  \theta_i^4-\theta_j^4 
-\beta (\theta^3_i \theta_j -\theta_i \theta^3_j)
-
\gamma_1 (\theta_i - \theta_j) - \gamma_2 (\theta_i^2-\theta_j^2) 
\\
& \qquad \qquad - \gamma_3 (\theta_i^3-\theta_j^3)
  -\gamma_4 (\theta^2_i \theta_j -\theta_i \theta_j^2).
\end{align*}
Since the Hessenberg system $\Phi$ is circular, we have  $E_i A^* E_{i-1} \not=0$  $(1 \leq i \leq d)$ and $E_0 A^* E_d \not=0$. This and \eqref{eq:triplep} yield
$c_{i,i-1} =0$  $(1 \leq i \leq d)$ and $c_{0,d}=0$.
By Definition \ref{alt:bbT} and matrix multiplication,
\begin{align*}
\mathbb T \psi = \bigl( c_{1,0}, c_{2,1}, \ldots, c_{d,d-1}, c_{0,d} \bigr)^{\rm t} = 0.
\end{align*}
The result follows.
\end{proof}

\begin{lemma} \label{alt:dep2} The rank of $\mathbb T$ is at most 4.
\end{lemma}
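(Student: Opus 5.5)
This follows from Lemma \ref{alt:dep}; the plan is simply to turn the column dependency into a rank bound. That lemma gives a nonzero vector $\psi = (\gamma_1, \gamma_2, \gamma_3, \gamma_4, -1)^{\rm t}$ with $\mathbb T \psi = 0$. Its last coordinate is $-1 \neq 0$, so $\psi$ is nonzero and the null space of the linear map $\mathbb F^5 \to \mathbb F^{d+1}$ given by $\mathbb T$ has dimension at least $1$.

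By rank--nullity, the rank of $\mathbb T$ is then at most $5-1=4$. Equivalently, the column space of $\mathbb T$ is spanned by the first four columns, since the fifth lies in their span. This already gives the claimed bound.

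There is no genuine obstacle here. The only point to check is that the dependency produced in Lemma \ref{alt:dep} is nontrivial, and this holds because the coefficient of the rightmost column is exactly $-1$.

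In what follows, the bound will presumably be used through the vanishing of all $5\times 5$ minors of $\mathbb T$. Since $d\geq 4$ means $\mathbb T$ has at least $5$ rows, these minors yield polynomial constraints on $\lbrace \theta_i\rbrace_{i=0}^d$ and $\beta$. Those constraints should then be combined with Lemma \ref{lem:lem7} applied to $\Phi^*$, aiming at the desired contradiction.
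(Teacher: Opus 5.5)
Your proposal is correct and is exactly the paper's argument: the paper proves this lemma simply by citing Lemma \ref{alt:dep}, and your rank--nullity step using the null vector $\psi$ (whose last coordinate is $-1$) just spells that citation out. Your closing remarks about how the bound will be used later are not needed for the proof.
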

\begin{proof} By Lemma \ref{alt:dep}.
\end{proof}

\begin{definition}\label{alt:Ti} \rm
For $0 \leq i \leq d-4$, let $\mathbb T_i$ denote the $5 \times 5$ matrix obtained from $\mathbb T$ by removing the top $i$ rows and the bottom $d-4-i$ rows.
\end{definition}

\begin{lemma} \label{alt:detTi}
For $0 \leq i \leq d-4$ we have ${\rm det}(\mathbb T_i)=0$.
\end{lemma}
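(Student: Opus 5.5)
The claim follows directly from Lemma \ref{alt:dep}. That lemma supplies a vector $\psi = (\gamma_1,\gamma_2,\gamma_3,\gamma_4,-1)^{\rm t}$ with $\mathbb T \psi = 0$. I will pass from $\mathbb T$ to each $5\times 5$ block $\mathbb T_i$ and observe that the same $\psi$ is a nonzero null vector of that block.

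Fix $i$ with $0 \leq i \leq d-4$. By Definition \ref{alt:Ti}, $\mathbb T_i$ consists of five consecutive rows of $\mathbb T$, namely rows $i, i+1, \ldots, i+4$. Since $d \geq 4$, these are well defined, and each row of $\mathbb T_i$ is a row of $\mathbb T$. The equation $\mathbb T \psi = 0$ says that every row of $\mathbb T$ has zero inner product with $\psi$. In particular each row of $\mathbb T_i$ has zero inner product with $\psi$, so $\mathbb T_i \psi = 0$.

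The last coordinate of $\psi$ is $-1$, so $\psi \neq 0$. A square matrix with a nonzero null vector is singular, and therefore ${\rm det}(\mathbb T_i) = 0$.

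Alternatively, the statement follows from Lemma \ref{alt:dep2}. Any submatrix of $\mathbb T$ formed by selecting rows has rank at most ${\rm rank}(\mathbb T) \leq 4$, so the $5 \times 5$ matrix $\mathbb T_i$ cannot be invertible.

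There is no real obstacle here. The only point to check is that the index range $0 \leq i \leq d-4$ gives exactly five rows: we remove the top $i$ rows and the bottom $d-4-i$ rows from the $d+1$ rows of $\mathbb T$, leaving $(d+1) - i - (d-4-i) = 5$ rows. All the substantive work was done earlier, in Lemma \ref{lem:lem5} and Lemma \ref{alt:dep}.
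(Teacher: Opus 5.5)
Your proposal is correct and matches the paper's proof, which simply cites Lemma \ref{alt:dep2} (rank of $\mathbb T$ at most $4$) and linear algebra. Your primary argument, restricting the null vector $\psi$ from the proof of Lemma \ref{alt:dep} to the five selected rows, is just the concrete form of that same rank argument.
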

\begin{proof} By Lemma \ref{alt:dep2} and linear algebra.
\end{proof}

\noindent Next, we use Lemma \ref{alt:detTi} to solve for $\beta$.

\begin{definition} \label{alt:bbTp}
\rm Define the $(d+1) \times 5 $  matrices     
\begin{align*}
{}^+{\mathbb T} =\begin{pmatrix}
          \theta_0-\theta_1 & \theta_0^2-\theta_1^2& \theta_0^3-\theta_1^3& \theta^2_0 \theta_1 -\theta_0\theta_1^2&  \theta_0^4 - \theta_1^4 \\
           \theta_1-\theta_2 & \theta_1^2-\theta_2^2& \theta_1^3-\theta_2^3& \theta_1^2 \theta_2 -\theta_1 \theta_2^2&     \theta_1^4 - \theta_2^4    \\
            \theta_2-\theta_3 & \theta_2^2-\theta_3^2& \theta_2^3-\theta_3^3& \theta_2^2 \theta_3-\theta_2 \theta_3^2 &  \theta_2^4 - \theta_3^4   \\
            \vdots &\vdots&\vdots&\vdots&\vdots \\
             \theta_{d-1}-\theta_d & \theta_{d-1}^2-\theta_d^2& \theta_{d-1}^3-\theta_d^3& \theta_{d-1}^2 \theta_d-\theta_{d-1} \theta_d^2&   \theta_{d-1}^4 - \theta_d^4   \\
              \theta_d-\theta_0 & \theta_d^2-\theta_0^2& \theta_d^3-\theta_0^3& \theta_d^2 \theta_0-\theta_d\theta_0^2 &   \theta_d^4 - \theta_0^4 
          \end{pmatrix}
\end{align*}
and
\begin{align*}
{}^-{\mathbb T} =\begin{pmatrix}
          \theta_0-\theta_1 & \theta_0^2-\theta_1^2& \theta_0^3-\theta_1^3& \theta^2_0 \theta_1 -\theta_0\theta_1^2& \theta^3_0 \theta_1 -\theta_0 \theta_1^3 \\
           \theta_1-\theta_2 & \theta_1^2-\theta_2^2& \theta_1^3-\theta_2^3& \theta_1^2 \theta_2 -\theta_1 \theta_2^2&   \theta^3_1 \theta_2-\theta_1 \theta_2^3 \\
            \theta_2-\theta_3 & \theta_2^2-\theta_3^2& \theta_2^3-\theta_3^3& \theta_2^2 \theta_3-\theta_2 \theta_3^2 &   \theta_2^3 \theta_3-\theta_2 \theta_3^3 \\
            \vdots &\vdots&\vdots&\vdots&\vdots \\
             \theta_{d-1}-\theta_d & \theta_{d-1}^2-\theta_d^2& \theta_{d-1}^3-\theta_d^3& \theta_{d-1}^2 \theta_d-\theta_{d-1} \theta_d^2&    \theta_{d-1}^3 \theta_d-\theta_{d-1}\theta_d^3  \\
              \theta_d-\theta_0 & \theta_d^2-\theta_0^2& \theta_d^3-\theta_0^3& \theta_d^2 \theta_0-\theta_d\theta_0^2 &   \theta_d^3 \theta_0-\theta_d \theta_0^3 
          \end{pmatrix}.
\end{align*}
\end{definition}

\begin{definition}\label{alt:Tipm} \rm
For $0 \leq i \leq d-4$, let ${}^+\mathbb T_i$ (resp.  ${}^-\mathbb T_i$)     denote the $5 \times 5$ matrix obtained from ${}^+\mathbb T$ (resp. ${}^-\mathbb T$ )  by removing the top $i$ rows and the bottom $d-4-i$ rows.
\end{definition}

\begin{lemma} \label{alt:solbeta}
For $0 \leq i \leq d-4$, 
\begin{align*}
{\rm det}({}^+ \mathbb T_i) = \beta {\rm det}({}^- \mathbb T_i).
\end{align*}
\end{lemma}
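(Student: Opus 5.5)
This follows from multilinearity of the determinant in the last column, combined with Lemma \ref{alt:detTi}. Fix $i$ with $0 \leq i \leq d-4$. By Definitions \ref{alt:bbT} and \ref{alt:bbTp}, the first four columns of $\mathbb T$, ${}^+\mathbb T$ and ${}^-\mathbb T$ coincide. The last column of $\mathbb T$ equals the last column of ${}^+\mathbb T$ minus $\beta$ times the last column of ${}^-\mathbb T$. Removing the same rows from each matrix preserves these relations. Hence the same holds for the $5\times 5$ submatrices $\mathbb T_i$, ${}^+\mathbb T_i$, ${}^-\mathbb T_i$ from Definitions \ref{alt:Ti} and \ref{alt:Tipm}.

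Since the determinant is linear in the last column, we get
\begin{align*}
{\rm det}(\mathbb T_i) = {\rm det}({}^+\mathbb T_i) - \beta\, {\rm det}({}^-\mathbb T_i).
\end{align*}
By Lemma \ref{alt:detTi}, the left-hand side is zero, which gives the claimed identity.

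There is no real obstacle. The one thing to check is that the rightmost entry in each row of $\mathbb T$ is exactly $(\theta_k^4-\theta_{k+1}^4) - \beta(\theta_k^3\theta_{k+1}-\theta_k\theta_{k+1}^3)$. This also holds for the wrap-around row $(d,0)$, which may occur in $\mathbb T_i$ when $i=d-4$. Comparing this with the last entries of ${}^+\mathbb T$ and ${}^-\mathbb T$ in the corresponding row confirms the column relation.
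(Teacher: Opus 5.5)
Your proposal is correct and is essentially the paper's own proof: the paper also writes $0={\rm det}(\mathbb T_i)={\rm det}({}^+\mathbb T_i)-\beta\,{\rm det}({}^-\mathbb T_i)$ by Lemma \ref{alt:detTi} and linearity of the determinant in the last column. Your remark that the wrap-around row $\theta_d-\theta_0$ enters $\mathbb T_i$ exactly when $i=d-4$ is accurate, and the column relation holds for that row as well.
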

\begin{proof} By Lemma \ref{alt:detTi} and the construction,
\begin{align*}
0 = {\rm det}(\mathbb T_i) = {\rm det}({}^+\mathbb T_i) - \beta {\rm det}({}^-\mathbb T_i).
\end{align*}
\end{proof}

\noindent Applying Lemma \ref{alt:solbeta} to $\Phi^*$, we find that
for $0 \leq i \leq d-4$, 
\begin{align}
{\rm det}({}^+ \mathbb T^*_i) = \beta^* {\rm det}({}^- \mathbb T^*_i). \label{eq:maine}
\end{align}

\begin{lemma} \label{alt:detMt} For $0 \leq i \leq d-4$,
\begin{align*}
{\rm det}({}^- \mathbb T^*_i) &= 
 (\theta^*_i-\theta^*_{i+1})(\theta^*_{i+1}-\theta^*_{i+2}) (\theta^*_{i+2}-\theta^*_{i+3})(\theta^*_{i+3}-\theta^*_{i+4})(\theta^*_{i+4}-\theta^*_{i+5}) \\
&\quad  \times (\theta^*_i-\theta^*_{i+2})(\theta^*_{i+1}-\theta^*_{i+3})^2 (\theta^*_{i+2} -\theta^*_{i+4})^2 (\theta^*_{i+3}-\theta^*_{i+5})  \\
& \quad \times (\theta^*_{i+1}-\theta^*_{i+4}) \beta \xi (-1)^i
\end{align*}
where we understand $\theta^*_{d+1}=\theta^*_0$.  Moreover, ${\rm det}({}^- \mathbb T^*_i) \not=0$. 
\end{lemma}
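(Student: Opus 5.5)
The plan is to reduce ${\rm det}({}^- \mathbb T^*_i)$ to a determinant in elementary symmetric functions, and then use the recurrences satisfied by $\lbrace \theta^*_j\rbrace$ to factor it. For $i \leq k \leq i+4$, the row of ${}^-\mathbb T^*_i$ indexed by $k$ has the form
\begin{align*}
\bigl(x-y,\; x^2-y^2,\; x^3-y^3,\; x^2y-xy^2,\; x^3y-xy^3\bigr), \qquad x=\theta^*_k,\; y=\theta^*_{k+1},
\end{align*}
with indices read mod $d+1$ as in \eqref{eq:mod}. This is where the convention $\theta^*_{d+1}=\theta^*_0$ enters. Each row has the factor $x-y$, and I will pull these factors out. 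Writing $s_k=\theta^*_k+\theta^*_{k+1}$ and $p_k=\theta^*_k\theta^*_{k+1}$, the remaining row is $(1, s_k, s_k^2-p_k, p_k, p_k s_k)$. Adding the fourth column to the third turns it into $(1,s_k,s_k^2,p_k,p_ks_k)$. So
\begin{align*}
{\rm det}({}^- \mathbb T^*_i) = \prod_{k=i}^{i+4}(\theta^*_k-\theta^*_{k+1}) \cdot D_i,
\end{align*}
where $D_i$ is the $5\times 5$ determinant with rows $(1,s_k,s_k^2,p_k,p_ks_k)$. This accounts for the first line of the claimed formula.

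Next I will eliminate variables using what we know about $\lbrace\theta^*_j\rbrace_{j\in\mathbb Z}$. Definition \ref{def:check1} together with Lemma \ref{lem:check3} gives, for all $j \in \mathbb Z$, the third-order relation
\begin{align*}
\theta^*_{j+1} = \theta^*_{j-2} - (\beta+1)(\theta^*_{j-1}-\theta^*_j) - (-1)^j\xi .
\end{align*}
Applying it successively with $j=i+2, i+3, i+4$ expresses $\theta^*_{i+3},\theta^*_{i+4},\theta^*_{i+5}$ as polynomials in $\theta^*_i,\theta^*_{i+1},\theta^*_{i+2},\beta,\xi$. These expressions are consistent with Lemma \ref{lem:lem6a}, since that lemma follows from the relation above. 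Substituting them makes $D_i$ a polynomial in five quantities, and the task is to show it equals
\begin{align*}
(\theta^*_i-\theta^*_{i+2})(\theta^*_{i+1}-\theta^*_{i+3})^2(\theta^*_{i+2}-\theta^*_{i+4})^2(\theta^*_{i+3}-\theta^*_{i+5})(\theta^*_{i+1}-\theta^*_{i+4})\,\beta\xi(-1)^i .
\end{align*}

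This factorization is the main obstacle. I would first try to show that each listed factor divides $D_i$. If, say, $\theta^*_{i+1}=\theta^*_{i+3}$, then the rows $k=i+1$ and $k=i+2$ share the value $\theta^*_{i+2}$ paired with equal partners, so they coincide. The squared factors should come from a derivative argument: the rows depend quadratically on the coincidence. After that, degrees and a leading coefficient would be compared, with the $\beta\xi(-1)^i$ factor identified by checking one specialization, for instance small values of $\xi$ or a derivative in $\xi$ at $\xi=0$. Realistically, the cleanest route is a direct symbolic expansion and factorization, with the sign $(-1)^i$ tracked through the $(-1)^j\xi$ terms. I expect this part to be a lengthy but mechanical computation.

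Finally, for nonvanishing: by Lemma \ref{lem:dodd} and $d\ge 4$, we have $d\geq 5$, so $d+1\ge 6$. Hence the six indices $i,i+1,\dots,i+5$ are distinct mod $d+1$, and the corresponding $\theta^*$'s are mutually distinct, so every difference factor is nonzero. Moreover $\beta\ne0$ by Lemma \ref{lem:betnz} and $\xi\neq0$ by Lemma \ref{lem:check4}. Hence ${\rm det}({}^- \mathbb T^*_i)\neq 0$.
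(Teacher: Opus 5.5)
Your proposal is correct. It differs from the paper only in how the mechanical verification is organized.

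The paper checks the formula by substituting the closed forms of Lemma \ref{lem:lem7}. That means treating cases (i)--(iv) separately and, in cases (i) and (iv), working in $\overline{\mathbb F}$ with $\beta=q+q^{-1}$. You instead pull $\prod_{k=i}^{i+4}(\theta^*_k-\theta^*_{k+1})$ out of the rows and pass to the determinant $D_i$ with rows $(1,s_k,s_k^2,p_k,p_ks_k)$. You then eliminate $\theta^*_{i+3},\theta^*_{i+4},\theta^*_{i+5}$ with the third-order relation from Definition \ref{def:check1} and Lemma \ref{lem:check3}. Since $\xi$ enters these substitutions only through $(-1)^i\xi$, the claim becomes one polynomial identity over $\mathbb Z$ in $\theta^*_i,\theta^*_{i+1},\theta^*_{i+2},\beta,(-1)^i\xi$.

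That identity buys you the following:
\begin{itemize}
\item it is uniform in the characteristic;
\item it needs neither the case split nor the algebraic closure;
\item it absorbs the convention $\theta^*_{d+1}=\theta^*_0$ through the periodic extension \eqref{eq:mod}.
\end{itemize}
The identity does hold; numerical spot checks confirm it, including the sign $(-1)^i$. The paper works with explicit solutions of the recurrence, whereas you never need to solve it. Neither version writes out the expansion. Your nonvanishing argument is fine.

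If you pursue the divisibility route instead, note that the squared factors do not come from a derivative argument. The mechanism is as follows.
\begin{itemize}
\item By Lemma \ref{lem:lem6a}, $\theta^*_i-\theta^*_{i+4}=\beta(\theta^*_{i+1}-\theta^*_{i+3})$. So when $\theta^*_{i+1}=\theta^*_{i+3}$, both row pairs $k\in\{i+1,i+2\}$ and $k\in\{i,i+3\}$ coincide. Each of the two row differences is divisible by $\theta^*_{i+1}-\theta^*_{i+3}$, which gives the square.
\item Similarly, $(\theta^*_{i+2}-\theta^*_{i+4})^2$ comes from the pairs $\{i+2,i+3\}$ and $\{i+1,i+4\}$, via $\theta^*_{i+1}-\theta^*_{i+5}=\beta(\theta^*_{i+2}-\theta^*_{i+4})$.
\item For the factor $\theta^*_{i+1}-\theta^*_{i+4}$: when it vanishes, the reduced rows $k=i,i+1,i+3,i+4$ all come from pairs containing $\theta^*_{i+1}$. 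Hence they lie in a $3$-dimensional space, and the determinant vanishes.
\item For $\beta=0$, rows $i$ and $i+4$ coincide.
\item For $\xi=0$, the quadratic $P$ from the proof of Lemma \ref{lem:check4} makes the first four columns dependent.
\end{itemize}
A degree count and one normalization would still be needed to pin down the constant.
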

\begin{proof} The first assertion is checked using the data in Lemma \ref{lem:lem7}. The last assertion follows from the first assertion along with
Lemmas \ref{lem:betnz}, \ref{lem:check4}. 
\end{proof}

\begin{lemma} \label{alt:detPt} For $0 \leq i \leq d-4$,
\begin{align*}
{\rm det}({}^+ \mathbb T^*_i) &= 
 (\theta^*_i-\theta^*_{i+1})(\theta^*_{i+1}-\theta^*_{i+2}) (\theta^*_{i+2}-\theta^*_{i+3})(\theta^*_{i+3}-\theta^*_{i+4})(\theta^*_{i+4}-\theta^*_{i+5}) \\
&\quad  \times (\theta^*_i-\theta^*_{i+2})(\theta^*_{i+1}-\theta^*_{i+3})^2 (\theta^*_{i+2} -\theta^*_{i+4})^2 (\theta^*_{i+3}-\theta^*_{i+5})  \\
& \quad \times   \bigl(\xi +                      (-1)^i  \beta (\theta^*_{i+1}-\theta^*_{i+4}) \bigr) \beta \xi
\end{align*}
where we understand $\theta^*_{d+1}=\theta^*_0$.  
\end{lemma}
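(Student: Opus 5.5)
The formula will be checked the same way the paper checks Lemma \ref{alt:detMt}: compute the $5\times 5$ determinant directly, using the closed forms of Lemma \ref{lem:lem7}. To keep the computation manageable I will not expand in the closed forms immediately. Instead I will first reduce ${\rm det}({}^+\mathbb T^*_i)$ to ${\rm det}({}^-\mathbb T^*_i)$ plus a correction term that only involves the recurrence data of Lemma \ref{lem:lem6a} and Lemma \ref{lem:check3}.

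First I apply the paper's conventions to $\Phi^*$. The matrix ${}^+\mathbb T^*_i$ has rows indexed by $k=i,\ldots,i+4$. Its fifth column has entries $\theta^{*4}_k-\theta^{*4}_{k+1}$, and the fourth column of ${}^-\mathbb T^*_i$ is replaced by $\theta^{*3}_k\theta^*_{k+1}-\theta^*_k\theta^{*3}_{k+1}$. Dividing row $k$ by $\theta^*_k-\theta^*_{k+1}$ factors out $\prod_{k=i}^{i+4}(\theta^*_k-\theta^*_{k+1})$, with $\theta^*_{d+1}=\theta^*_0$. The remaining rows are
\[
\bigl(1,\ s_k,\ s_k^2-p_k,\ p_k,\ s_k(s_k^2-2p_k)\bigr) \quad\text{(plus side)}, \qquad \bigl(1,\ s_k,\ s_k^2-p_k,\ p_k,\ p_k s_k\bigr) \quad\text{(minus side)},
\]
where $s_k=\theta^*_k+\theta^*_{k+1}$ and $p_k=\theta^*_k\theta^*_{k+1}$.

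Column operations that use the first four columns remove $s^2$ and $p$ from the third column, so the third column may be taken to be $s_k^2$. On the plus side, the fifth column $s^3-2ps$ can then be replaced by $s^3-(2+\beta)ps+\beta ps$. This gives
\[
{\rm det}({}^+\mathbb T^*_i)=\beta\,{\rm det}({}^-\mathbb T^*_i)+\prod(\theta^*_k-\theta^*_{k+1})\,D_i ,
\]
where $D_i$ is the determinant with rows $\bigl(1,s_k,s_k^2,p_k,s_k^3-(\beta+2)p_ks_k\bigr)$. Comparing with Lemma \ref{alt:detMt}, the $\beta$-multiple already supplies the term $(-1)^i\beta(\theta^*_{i+1}-\theta^*_{i+4})\beta\xi$ times the common prefactor. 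It therefore remains to show that $D_i$ equals the displayed second-line product times $\beta\xi^2$.

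To compute $D_i$ I will substitute the closed forms of Lemma \ref{lem:lem7}. Take case (i) first, with $\theta^*_k=\alpha^*_0+\alpha^*_1q^k+\alpha^*_2q^{-k}+c(-1)^k$ and $c=\xi/(2(\beta+2))$. Translating all $\theta^*$ by $\alpha^*_0$ does not change the $\theta^*$ differences. It also leaves $D_i$ unchanged, because the translation corresponds to triangular column operations on the span of $1,s,s^2,p$ and on the fifth column. So I may set $\alpha^*_0=0$. The expression $s^3-(\beta+2)ps$ is the natural combination here: for a pure $q$-geometric sequence, $s_k^2-(\beta+2)p_k$ is constant in $k$ up to the $(-1)^k$ perturbation. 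So the fifth column collapses modulo the first four, up to terms linear in $c$ and of order $c^2$ that alternate in sign. From these $D_i$ should factor.

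Rather than expand symbolically in $q$ and the $\alpha^*$'s, which would be the main obstacle, I will use a degree and vanishing argument. View $D_i$ as a polynomial in the five values $\theta^*_i,\ldots,\theta^*_{i+4}$, subject to the recurrence of Lemma \ref{lem:lem6a} that ties $\theta^*_{i+5}$ to them. Then I will show that it vanishes whenever $\theta^*_{k}=\theta^*_{k+2}$ or $\theta^*_{k}=\theta^*_{k+4}$, because two rows coincide after the symmetric-function reparametrization. This should produce the factors $(\theta^*_i-\theta^*_{i+2})(\theta^*_{i+1}-\theta^*_{i+3})^2(\theta^*_{i+2}-\theta^*_{i+4})^2(\theta^*_{i+3}-\theta^*_{i+5})$. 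The leftover constant $\beta\xi^2$ is then fixed by comparing one coefficient, or by one specialization such as $\alpha^*_2=0$.

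Cases (ii)–(iv) of Lemma \ref{lem:lem7} follow by the same computation, or by a specialization/continuity argument, since the identity is polynomial in $\beta$ and the $\theta^*$'s under the recurrence. Case (v) is vacuous. I expect the factorization and normalization of $D_i$ to be the hard step; if the vanishing argument does not go through cleanly, I will fall back on direct computer-algebra expansion, as the paper does for Lemma \ref{alt:detMt}.
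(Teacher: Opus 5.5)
\textbf{Verdict: genuine gap in your main argument; your fallback is the paper's proof.}

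Your reduction is correct, and it organizes the problem better than the paper, which simply verifies the formula from the closed forms of Lemma \ref{lem:lem7}. Dividing row $k$ by $\theta^*_k-\theta^*_{k+1}$ and using linearity in the last column gives ${\rm det}({}^+\mathbb T^*_i)=\beta\,{\rm det}({}^-\mathbb T^*_i)+\prod_{k=i}^{i+4}(\theta^*_k-\theta^*_{k+1})\,D_i$. So by Lemma \ref{alt:detMt} the statement is equivalent to $D_i=\beta\xi^2Q_i$, with $Q_i$ the second-line product. Your translation argument for $D_i$ is also fine. But that identity is the entire content, and the vanishing argument you sketch does not prove it, for three reasons.

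First, Lemma \ref{lem:lem6a} is a recurrence of order four, so it fixes $\theta^*_{i+4}$ as well as $\theta^*_{i+5}$. The free data are $\theta^*_i,\ldots,\theta^*_{i+3},\beta$, and in $\mathbb F[\theta^*_i,\ldots,\theta^*_{i+3},\beta]$ the quantities $\theta^*_{i+2}-\theta^*_{i+4}$, $\theta^*_{i+3}-\theta^*_{i+5}$ and $\xi$ are particular linear forms with $\beta$-dependent coefficients.

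Second, ``two rows coincide'' yields only simple zeros, so it cannot produce $(\theta^*_{i+1}-\theta^*_{i+3})^2(\theta^*_{i+2}-\theta^*_{i+4})^2$. The missing observation is that on $\theta^*_{i+1}=\theta^*_{i+3}$ the recurrence forces $\theta^*_i=\theta^*_{i+4}$. Then rows $i,i+3$ coincide as well as rows $i+1,i+2$, and two disjoint coinciding pairs give a zero of order two. Likewise on $\theta^*_{i+2}=\theta^*_{i+4}$, rows $i+1,i+4$ and rows $i+2,i+3$ both coincide.

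Third, $\beta\xi^2$ is not a ``leftover constant'': it has degree $2$ in the $\theta^*$'s. You must separately show $\beta\mid D_i$ and $\xi^2\mid D_i$. The first is easy: at $\beta=0$ the sequence has period $4$, so rows $i$ and $i+4$ coincide. Only after both divisibilities does homogeneity (degree $8$ on both sides) make the cofactor an element of $\mathbb F[\beta]$. That cofactor must still be determined as a function of $\beta$, not by one numerical coefficient.

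The factor $\xi^2$ is also where your remark about the fifth column ``collapsing'' undercounts. A small fifth column gives only one factor of $\xi$; the second comes from the $p$ column. In case (i) with $\alpha^*_0=0$, write $\theta^*_k=u_k+c(-1)^k$ with $u_k=\alpha^*_1q^k+\alpha^*_2q^{-k}$ and $c=\xi/(2(\beta+2))$. Then:
\begin{itemize}
\item $s_k=u_k+u_{k+1}$ does not involve $c$;
\item $p_k=u_ku_{k+1}+c(-1)^k(u_{k+1}-u_k)-c^2$;
\item $s_k^2-(\beta+2)u_ku_{k+1}$ is independent of $k$.
\end{itemize}
So modulo the columns $1,s,s^2$, the last two columns become $c(-1)^k(u_{k+1}-u_k)$ and $-(\beta+2)c(-1)^k(u_{k+1}^2-u_k^2)$, and $c^2$ can be extracted.

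Next take sums and differences of these two columns with $s$ and $s^2$, using that $\alpha^*_1q^k\cdot\alpha^*_2q^{-k-1}$ is independent of $k$. What remains is a Vandermonde determinant in $Z_k$, where $Z_k$ is $\alpha^*_1q^k$ or $\alpha^*_2q^{-k-1}$ according to the parity of $k$. Its odd-distance factors are proportional to $\theta^*_k-\theta^*_{k+2}$ and $\theta^*_k-\theta^*_{k+4}$. The relation $\theta^*_k-\theta^*_{k+4}=\beta(\theta^*_{k+1}-\theta^*_{k+3})$ produces the squares, and the constants cancel to give exactly $D_i=\beta\xi^2Q_i$. Without an argument of this kind, your proposal reduces to its fallback, brute-force expansion, which is exactly the paper's proof.

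Finally, ``continuity'' is meaningless over a general field, so replace it for cases (ii)--(iv). Eliminate $\theta^*_{i+4},\theta^*_{i+5},\xi$ via the recurrence; the identity then lives in $\mathbb Z[\theta^*_i,\ldots,\theta^*_{i+3},\beta]$. It holds on the Zariski-dense set covered by case (i) over $\mathbb C$, hence identically, hence in every field, including characteristic $2$.
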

\begin{proof} This is checked using the data in Lemma \ref{lem:lem7}. 
\end{proof}

\begin{lemma} \label{alt:bb} For $0 \leq i \leq d-4$, 
\begin{align*}
\beta^* = \beta + \frac{\xi (-1)^i}{\theta^*_{i+1}-\theta^*_{i+4}}. 
\end{align*}
\end{lemma}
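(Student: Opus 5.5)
The plan is to combine equation \eqref{eq:maine} with the two determinant evaluations in Lemmas \ref{alt:detMt} and \ref{alt:detPt}. Fix $i$ with $0 \leq i \leq d-4$. By \eqref{eq:maine}, which is Lemma \ref{alt:solbeta} applied to the dual system $\Phi^*$, we have ${\rm det}({}^+\mathbb T^*_i) = \beta^* {\rm det}({}^-\mathbb T^*_i)$. Lemma \ref{alt:detMt} gives ${\rm det}({}^-\mathbb T^*_i) \neq 0$, so we can divide:
\begin{align*}
\beta^* = \frac{{\rm det}({}^+\mathbb T^*_i)}{{\rm det}({}^-\mathbb T^*_i)}.
\end{align*}

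Next we substitute the closed forms from Lemmas \ref{alt:detPt} and \ref{alt:detMt}. They share the common factor
\begin{align*}
\Pi_i &= (\theta^*_i-\theta^*_{i+1})(\theta^*_{i+1}-\theta^*_{i+2})(\theta^*_{i+2}-\theta^*_{i+3})(\theta^*_{i+3}-\theta^*_{i+4})(\theta^*_{i+4}-\theta^*_{i+5}) \\
&\quad \times (\theta^*_i-\theta^*_{i+2})(\theta^*_{i+1}-\theta^*_{i+3})^2(\theta^*_{i+2}-\theta^*_{i+4})^2(\theta^*_{i+3}-\theta^*_{i+5}),
\end{align*}
and both also carry the factor $\beta\xi$. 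This combined factor $\Pi_i\beta\xi$ is nonzero, as the nonvanishing of ${\rm det}({}^-\mathbb T^*_i)$ already shows, so it cancels. What remains is
\begin{align*}
\beta^* = \frac{\xi + (-1)^i \beta(\theta^*_{i+1}-\theta^*_{i+4})}{(-1)^i(\theta^*_{i+1}-\theta^*_{i+4})}.
\end{align*}
Here $\theta^*_{i+1}-\theta^*_{i+4}\neq 0$, because $1 \leq i+1 < i+4 \leq d$ and the $\theta^*_j$ are mutually distinct. Since $(-1)^{-i}=(-1)^i$, splitting the fraction gives $\beta^* = \beta + \xi(-1)^i/(\theta^*_{i+1}-\theta^*_{i+4})$, which is the claim.

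One point needs care: the $\theta^*_{d+1}=\theta^*_0$ convention is consistent with \eqref{eq:mod} and only enters through $\theta^*_{i+5}$ when $i=d-4$. It appears only in the cancelled factor $\Pi_i$, so it does not affect the argument.

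The real content of the proof lies in Lemmas \ref{alt:detMt} and \ref{alt:detPt}, which we take as given. Beyond them, the only step needing attention is justifying the nonvanishing of the cancelled factors, and that follows from Lemmas \ref{lem:betnz} and \ref{lem:check4} together with the distinctness of the $\theta^*_j$.
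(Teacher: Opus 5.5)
Your proposal is correct and matches the paper's own proof, which simply evaluates \eqref{eq:maine} using Lemmas \ref{alt:detMt} and \ref{alt:detPt} and simplifies. You do the same: divide by the nonzero ${\rm det}({}^-\mathbb T^*_i)$, cancel the common nonzero factor, and split the fraction, and your side remarks (the nonvanishing of $\theta^*_{i+1}-\theta^*_{i+4}$, and $\theta^*_{d+1}=\theta^*_0$ appearing only in the cancelled factor) are accurate.
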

\begin{proof} Evaluate \eqref{eq:maine} using Lemmas \ref{alt:detMt}, \ref{alt:detPt} and simplify the result.
\end{proof}

\begin{lemma} \label{alt:qed} The circular Hessenberg system $\Phi$ does not exist.
\end{lemma}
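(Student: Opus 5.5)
The plan is to extract a contradiction from Lemma \ref{alt:bb} and its dual version by showing that the differences $\theta^*_{i+1}-\theta^*_{i+4}$ are too rigid to be compatible with the $\theta^*_i$ being mutually distinct. Throughout, recall that $d$ is odd (Lemma \ref{lem:dodd}) and $d\geq 4$, so $d\geq 5$.

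First I would observe that the matrix $\mathbb T$ in Definition \ref{alt:bbT} is cyclically symmetric. Its rows are indexed by the arcs $i\to i+1$ with indices mod $d+1$, including the wrap-around row $\theta_d-\theta_0$. The vector $\psi$ in Lemma \ref{alt:dep} kills every row. So Lemma \ref{alt:detTi} holds for any five cyclically consecutive rows, and not only for the $\mathbb T_i$ with $0\le i\le d-4$. The same applies to ${}^{\pm}\mathbb T^*$. The determinant computations of Lemmas \ref{alt:detMt}, \ref{alt:detPt} use only the closed forms of Lemma \ref{lem:lem7}, which are valid for all $i\in\mathbb Z$ by \eqref{eq:mod}. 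Hence I expect Lemma \ref{alt:bb} to extend to all $i\in\mathbb Z$:
\begin{align*}
\beta^*-\beta = \frac{\xi(-1)^i}{\theta^*_{i+1}-\theta^*_{i+4}} \qquad (i\in\mathbb Z).
\end{align*}
I would check this extension carefully, since it is the step that makes the argument uniform. If it fails, I would fall back on the range $0\le i\le d-4$ and treat small $d$ by hand.

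Next, comparing the displayed equation at $i$ and $i+1$, and using $\xi\neq 0$ (Lemma \ref{lem:check4}), gives $\beta^*\neq\beta$. Writing $s_i=\theta^*_i-\theta^*_{i+3}$, it also gives $s_{i+1}=-s_{i+2}$. Therefore $s_i=(-1)^i\sigma$ for some nonzero $\sigma$, with $\sigma=-\xi/(\beta^*-\beta)$. Then
\begin{align*}
\theta^*_i-\theta^*_{i+6}=s_i+s_{i+3}=(-1)^i\sigma+(-1)^{i+3}\sigma=0 .
\end{align*}
If $d\geq 7$, then $0<6<d+1$, so $\theta^*_0=\theta^*_6$ with $6\not\equiv 0$ mod $d+1$. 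This contradicts the mutual distinctness of $\{\theta^*_i\}_{i=0}^d$.

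The main obstacle is the remaining case $d=5$, where the relation $\theta^*_i=\theta^*_{i+6}$ is vacuous. There I would combine three sources of information:
\begin{enumerate}
\item[\rm (a)] the relations $\theta^*_i-\theta^*_{i+3}=(-1)^i\sigma$ for $i=0,1,2$;
\item[\rm (b)] the recurrence of Lemma \ref{lem:lem6a} with period $6$;
\item[\rm (c)] the closed forms of Lemma \ref{lem:lem7} (with $q^6=1$ in cases (i) and (iv), or ${\rm Char}(\mathbb F)\mid 6$ in cases (ii) and (iii)).
\end{enumerate}
I would also use the dual statements for $\{\theta_i\}$, obtained by applying the same results to $\Phi^*$. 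These yield $\theta_i-\theta_{i+3}=(-1)^i\sigma'$ and $\beta-\beta^*=-\xi^*/\sigma'$, with both $\beta$ and $\beta^*$ constrained by their respective recurrences.

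Concretely, in case (i) I would substitute the closed form into $s_i=(-1)^i\sigma$ with $q^6=1$ and $q\neq\pm1$, so that $q$ is a primitive $3$rd or $6$th root of unity. Since $q^3=\pm1$, this forces $\alpha^*_1(1-q^3)q^i+\alpha^*_2(1-q^{-3})q^{-i}=0$ for all $i$. From that I would try to derive either $\alpha^*_1=\alpha^*_2=0$, or $q^3=1$ (that is, $\beta=-1$), and then exhibit two coinciding $\theta^*_i$ from the explicit formula. The characteristic cases (ii)--(iv), and the dual constraint linking $\beta^*$ to $\beta$, would be handled similarly by explicit substitution. Each branch should end either in a coincidence among the eigenvalues or in $\beta=0$ or $\xi=0$, contradicting Lemma \ref{lem:betnz} or Lemma \ref{lem:check4}.
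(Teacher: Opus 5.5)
Your argument for $d\geq 7$ is correct: alternation of $s_i=\theta^*_i-\theta^*_{i+3}$ forces $\theta^*_i=\theta^*_{i+6}$. The cyclic extension of Lemma \ref{alt:bb} is legitimate, though not needed there. The gap is the case $d=5$. It cannot be avoided, since it is the smallest odd $d\geq 4$. You leave it as an unexecuted plan, and the endpoint you predict for it does not exist.

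Take Lemma \ref{lem:lem7}(i) with $q^3=1$, $q\neq 1$, so $\beta=-1$. Then $\theta^*_i=\alpha^*_0+\alpha^*_1q^i+\alpha^*_2q^{-i}+c(-1)^i$ with $c\neq 0$ has the following properties:
\begin{itemize}
\item it has period $6$ and satisfies the recurrence of Lemma \ref{lem:lem6a};
\item it satisfies your relation $\theta^*_i-\theta^*_{i+3}=2c(-1)^i$;
\item it has $\xi=2(\beta+2)c=2c\neq 0$;
\item for generic $\alpha^*_1,\alpha^*_2,c$, the values $\theta^*_0,\ldots,\theta^*_5$ are mutually distinct.
\end{itemize}
So in this branch the data (a)--(c) yield no coincidence among the $\theta^*_i$, and neither $\beta=0$ nor $\xi=0$. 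The same happens in Lemma \ref{lem:lem7}(ii), where $d=5$ forces ${\rm Char}(\mathbb F)=3$ and so $\beta=2=-1$. It also happens in Lemma \ref{lem:lem7}(iv), where $q^6=1$ forces $q^3=1$. The contradiction has to come from $\Phi^*$, by confronting $\beta$ with $\beta^*$. None of the terminal contradictions you list is of that kind.

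The missing step is short and works for every odd $d\geq 5$ at once. Your relation $s_1=-s_2$ says $\theta^*_1+\theta^*_2-\theta^*_4-\theta^*_5=0$. Lemma \ref{lem:lem6a} with $i=4$ says $\theta^*_1-\theta^*_5=\beta(\theta^*_2-\theta^*_4)$. Together these give $(\beta+1)(\theta^*_2-\theta^*_4)=0$, hence $\beta=-1$. The standing hypothesis is invariant under $\Phi\mapsto\Phi^*$, so the same argument gives $\beta^*=-1$. Then $\beta=\beta^*$, which contradicts Lemma \ref{alt:bb} and $\xi\neq 0$.

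This is the paper's proof. It uses Lemma \ref{alt:bb} only for $i\in\{0,1\}$, which lie in the range $0\leq i\leq d-4$ once $d\geq 5$. It needs no closed forms and no split into $d\geq 7$ and $d=5$.
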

\begin{proof} Note that $d\geq 5$ since $d$ is odd. 
By Lemma \ref{alt:bb} and $\xi \not=0$, the scalar $(-1)^i (\theta^*_{i+1} - \theta^*_{i+4})$ is independent of $i$ for $0 \leq i \leq d-4$.
Taking $i \in \lbrace 0,1\rbrace$ we obtain
\begin{align*}
\theta^*_1 - \theta^*_4 = - (\theta^*_2-\theta^*_5).
\end{align*}
By this and Lemma \ref{lem:lem6a},
\begin{align*}
0 &= \theta^*_1+\theta^*_2-\theta^*_4-\theta^*_5 
 = (\beta+1)(\theta^*_2 - \theta^*_4).
\end{align*}
Therefore $\beta=-1$. Applying this result to $\Phi^*$, we obtain $\beta^*=-1$. Now $\beta=\beta^*$, which contradicts Lemma \ref{alt:bb} and $\xi \not=0$. We conclude that
$\Phi$ does not exist.
\end{proof}

\noindent  In this section, we proved Conjecture \ref{conj} for  $d\geq 4$. In previous sections, we proved Conjecture \ref{conj} for $d=2$ and $d=3$.
 We have proven Conjecture \ref{conj}. 



\bigskip

\noindent Kazumasa Nomura \hfil\break
\noindent Institute of Science Tokyo \hfil\break
\noindent Kohnodai Ichikawa 272-0827 Japan \hfil\break
\noindent email: {\tt knomura@pop11.odn.ne.jp} \hfil\break

\noindent Paul Terwilliger \hfil\break
\noindent Department of Mathematics \hfil\break
\noindent University of Wisconsin \hfil\break
\noindent 480 Lincoln Drive \hfil\break
\noindent Madison, WI 53706-1388 USA \hfil\break
\noindent email: {\tt terwilli@math.wisc.edu }\hfil\break

\section{Statements and Declarations}

\noindent {\bf Funding}: The author declares that no funds, grants, or other support were received during the preparation of this manuscript.
\medskip

\noindent  {\bf Competing interests}:  The author  has no relevant financial or non-financial interests to disclose.
\medskip

\noindent {\bf Data availability}: All data generated or analyzed during this study are included in this published article.

\end{document}